\documentclass[reqno,a4paper,11pt]{article}
\usepackage{amsmath,amsthm,dsfont,amsfonts,amssymb,fancyhdr}
\usepackage{enumerate}
\usepackage[usenames,dvipsnames]{color}
\usepackage{bbm,soul,supertabular,longtable,verbatim,extarrows}
\usepackage{titlesec}
\usepackage{graphicx}
\usepackage[titletoc,toc,title]{appendix}

\usepackage{tikz}
\usetikzlibrary{arrows,decorations.pathmorphing,backgrounds,positioning,fit,petri}

\usepackage{caption}

\usepackage{float}
\usepackage{BOONDOX-cal}
\usepackage{tikz}
\usepackage{booktabs,multirow,makecell}
\usepackage{authblk}
\usepackage[titletoc]{appendix}
\usetikzlibrary[trees]

\usepackage{lipsum}
\usepackage{mathrsfs}
\usepackage{indentfirst}
\usepackage[colorlinks=true, allcolors=blue]{hyperref}
\usepackage[top=2.4cm,bottom=2.2cm,left=2.6cm,right=2cm]{geometry}

\newtheorem{thm}{Theorem}[section]
\newtheorem{lem}[thm]{Lemma}
\newtheorem{pro}[thm]{Proposition}
\newtheorem{de}[thm]{Definition}

\newtheorem{cor}[thm]{Corollary}
\newtheorem{conj}[thm]{Conjecture}

\newtheorem{problem}[thm]{Problem}

\begin{document}
	\title{\bf On co-edge-regular graphs with 4 distinct eigenvalues}
      \author[a]{Hong-Jun Ge}
    \author[a,b]{Jack H. Koolen\thanks{Corresponding author}}

	\affil[a]{\footnotesize{School of Mathematical Sciences, University of Science and Technology of China, 96 Jinzhai Road, Hefei, 230026, Anhui, PR China}}
	\affil[b]{\footnotesize{CAS Wu Wen-Tsun Key Laboratory of Mathematics, University of Science and Technology of China, 96 Jinzhai Road, Hefei, Anhui, 230026, PR China}}
	
		\maketitle
	\pagestyle{plain}
	
	\newcommand\blfootnote[1]{%
		\begingroup
		\renewcommand\thefootnote{}\footnote{#1}%
		\addtocounter{footnote}{-1}%
		\endgroup}
	\blfootnote{2020 Mathematics Subject Classification. Primary: 05C50; Secondary: 05B15.} 
	\blfootnote{E-mail addresses: {\tt ghj17000225@mail.ustc.edu.cn} (H.-J. Ge), {\tt koolen@ustc.edu.cn} (J.H. Koolen).}

\vspace{-30pt}
\begin{abstract}
Tan et al. conjectured that connected co-edge-regular graphs with four distinct eigenvalues and fixed smallest eigenvalue, when having sufficiently large valency, belong to two different families of graphs.
In this paper we construct two new infinite families of connected co-edge-regular graphs with four distinct eigenvalues and fixed smallest eigenvalue, thereby disproving their conjecture. Moreover, one of these constructions demonstrates that clique-extensions of Latin Square graphs are not determined by their spectrum. 
\end{abstract}
\section{Introduction}
For undefined notions we refer to the next section and \cite{BH11,BrVM2022,god01}.
 
Over the past fifty years, strongly regular graphs have attracted a lot of attention, see \cite{BrVM2022}. Notice that a connected non-complete regular graph $G$ is strongly regular if and only if it has exactly three distinct eigenvalues. 
In particular, Neumaier classified strongly regular graphs with fixed smallest eigenvalue in \cite{N79}.
\begin{thm}[{cf. \cite[Theorem 5.1]{N79}}]\label{N51}
	Let $\lambda\geq 2$ be an integer. Except for finitely many exceptions, any strongly regular graph with smallest eigenvalue $-\lambda$ is a Steiner graph, a Latin Square graph, or a complete multipartite graph.
\end{thm}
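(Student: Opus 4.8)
The plan is to split the statement into a \emph{geometrisation} step and a \emph{classification} step: first show that, for fixed $\lambda$, all but finitely many strongly regular graphs with smallest eigenvalue $-\lambda$ arise as point graphs of partial geometries, and then sort these point graphs into the three named families. Throughout, write the eigenvalues of such a graph $G$ as $k>r>s=-\lambda$ and let $a$, $\mu$ be the numbers of common neighbours of two adjacent, respectively non-adjacent, vertices; recall the relations $r+s=a-\mu$ and $rs=\mu-k$, so that $k=\mu+r\lambda$. By the Delsarte--Hoffman ratio bound every clique of $G$ has at most $1-k/s=1+k/\lambda$ vertices; call a clique attaining this bound a \emph{Delsarte clique}. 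I call $G$ \emph{geometric} when its edges are partitioned by a family of Delsarte cliques, in which case these cliques are the lines of a partial geometry with point parameter $\sigma$, line parameter $\lambda-1$ (forced by $-((\lambda-1)+1)=-\lambda$) and connection number $\alpha$, whose point graph is $G$; here $1\le\alpha\le\lambda$.

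\textbf{Step 1: geometrisation (the crux).} The goal is Neumaier's \emph{claw bound}: if $G$ is not geometric, then $a$ is bounded by a function of $\lambda$ alone. I would prove it through the local structure. Interlacing an induced star $K_{1,t}$, whose smallest eigenvalue is $-\sqrt t$, against $G$ forces $t\le\lambda^2$, so every neighbourhood $G(x)$ has independence number at most $\lambda^2$. Now $G(x)$ is regular of degree $a$ on $k$ vertices; when $a$ is large relative to $\lambda$ this density, combined with the bounded independence number, forces $G(x)$ to decompose into at most $\lambda^2$ near-complete pieces, which extend through $x$ to maximal cliques close to the Delsarte bound. One then shows that two such near-Delsarte cliques meet in at most one vertex and that each edge lies in exactly one of them, i.e.\ $G$ is geometric. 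The single obstruction is a \emph{claw}: a vertex together with a set of pairwise non-adjacent neighbours lying in distinct near-maximal cliques; estimating the common neighbours that such a claw would create, against the admissible clique sizes, is exactly what bounds $a$. Once $a\le A(\lambda)$, the relations above give $r=a+\lambda-\mu\le a+\lambda$ and $\mu=a+\lambda-r\le a+\lambda$, hence $k=\mu+r\lambda$ and $v=(k-r)(k+\lambda)/\mu$ are bounded in terms of $\lambda$; so only finitely many non-geometric graphs occur. Making the clique-decomposition argument uniform in $\lambda$ --- that is, proving the claw bound itself --- is where essentially all the difficulty lies.

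\textbf{Step 2: classification of the geometric graphs.} Assume now $G$ is the point graph of the partial geometry with parameters $(\sigma,\lambda-1,\alpha)$ and $r=\sigma-\alpha$. If $r=0$ then $G$ is imprimitive and is precisely a complete multipartite graph with all parts of size $\lambda$, the third family. Otherwise $r\ge1$, and I split on $\alpha$. If $\alpha=\lambda$ (the maximal value $\alpha=(\lambda-1)+1$), then every two lines through a point off a fixed line meet it, so the dual incidence structure is a $2$-design with blocks of size $\lambda$; thus $G$ is the block graph of a Steiner system $S(2,\lambda,n)$, a Steiner graph. If $\alpha=\lambda-1$, the lines split into $\lambda$ parallel classes, the geometry is a net, and its point graph is a Latin square graph $L_\lambda(n)$, the net being equivalent to $\lambda-2$ mutually orthogonal Latin squares by Bruck's theory. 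For $\alpha\le\lambda-2$ the geometry is a generalised quadrangle ($\alpha=1$) or a proper partial geometry ($2\le\alpha\le\lambda-2$); since the line parameter $\lambda-1$ is now fixed, Higman's inequality $\sigma\le(\lambda-1)^2$ in the quadrangle case and the corresponding parameter restrictions for proper partial geometries bound $\sigma$, hence $v$, so these contribute only finitely many graphs.

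\textbf{Conclusion.} Combining the two steps, every strongly regular graph with smallest eigenvalue $-\lambda$, apart from a finite set depending only on $\lambda$, is a Steiner graph, a Latin square graph, or a complete multipartite graph, as claimed. I expect Step 1 --- the claw bound --- to be the main obstacle; Step 2 is essentially bookkeeping, modulo the standard bounds on generalised quadrangles and partial geometries invoked there.
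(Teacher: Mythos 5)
First, context: the paper does not prove this statement. Theorem~\ref{N51} is quoted from Neumaier's 1979 paper as background, so there is no in-paper argument to compare yours against; the only fair comparison is with Neumaier's original proof, whose architecture your outline does reproduce correctly. The split into a geometrisation step (the claw bound) and a classification step (Bose's trichotomy for partial geometries with $\lambda$ lines per point: $\alpha=\lambda$ giving duals of $2$-designs and hence Steiner graphs, $\alpha=\lambda-1$ giving nets and hence Latin square graphs, $r=0$ giving the complete multipartite case) is exactly the right skeleton, and the closing reduction in Step~1 --- from a bound on $a$ to bounds on $r$, $\mu$, $k$ and $v$ via $r+s=a-\mu$, $rs=\mu-k$ --- is correct.

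As a proof, however, the proposal has a genuine gap precisely where you flag it. Step~1 asserts, but does not establish, that a dense local graph with independence number at most $\lambda^2$ ``decomposes into at most $\lambda^2$ near-complete pieces'' extending to near-Delsarte cliques that pairwise meet in at most one vertex and cover every edge exactly once. That assertion \emph{is} the claw bound; the interlacing estimate $t\le\lambda^2$ for induced stars is only the entry point, and for $\lambda\ge 2$ it does not even exclude $(\lambda+1)$-claws, whose elimination (by counting common neighbours against admissible clique sizes, once $\mu$ and $a$ are large relative to $\lambda$) is the entire technical content of Neumaier's argument and of the Bruck--Bose clique analysis it refines. None of that counting is carried out, so the central implication ``not geometric $\Rightarrow$ $a\le A(\lambda)$'' remains unproved. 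A secondary, smaller gap: in Step~2 the finiteness of the cases $2\le\alpha\le\lambda-2$ is invoked as ``the corresponding parameter restrictions for proper partial geometries'' without naming a bound; you need a concrete inequality (Higman's bound covers only $\alpha=1$, and for proper partial geometries one must appeal to the Krein conditions, the absolute bound, or Neumaier's own inequality for pseudo-geometric parameters). As written, the proposal is an accurate roadmap to the literature rather than a proof.
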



In this paper, we study co-edge-regular graphs with four distinct eigenvalues and fixed smallest eigenvalue. We will show that for this class of graphs a result like Theorem \ref{N51} is difficult to obtain.

Notice that clique-extensions of  strongly regular graphs are co-edge-regular graphs with at most four distinct eigenvalues.
Hayat et al.\ \cite{HAYAT19} and Tan et al.\ \cite{TKX20}
gave spectral characterizations of clique-extensions of Square grid graphs and triangular graphs under the assumption that graphs are co-edge-regular.



\begin{thm}[{cf. \cite[Theorem 1.1]{HAYAT19}}]\label{Hayat}
	Let $G$ be a co-edge-regular graph with spectrum
	\[
	\left\{(s(2t+1)-1)^{1}, (st-1)^{2t}, (-1)^{(s-1)(t+1)^2},(-s-1)^{t^2}
	\right\},
	\]
	where $s\geq 2$ and $t\geq1$ are integers. If $t\geq11(s+1)^3(s+2)$, then $G$ is the $s$-clique extension of the $(t+1)\times (t+1)$-grid.
\end{thm}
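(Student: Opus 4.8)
The plan is to recover from $G$ a partition of its vertex set into $(t+1)^2$ cliques of size $s$ consisting of mutually \emph{twin} vertices, so that $G$ becomes the $s$-clique extension of the quotient graph $H$, and then to identify $H$ with the $(t+1)\times(t+1)$-grid through the classification of strongly regular graphs with smallest eigenvalue $-2$. First I would extract the combinatorial data encoded by the spectrum. The largest eigenvalue $s(2t+1)-1$ is simple, so $G$ is connected, and as $G$ is co-edge-regular it is regular of valency $k=s(2t+1)-1$ on $s(t+1)^2$ vertices; a standard counting identity (comparing $\mathrm{tr}(A^3)$ with the triangle count) then gives $\mu=2s$ for the number of common neighbours of two non-adjacent vertices. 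I would also record the two features that drive the argument: the smallest eigenvalue is $-s-1$, and the eigenvalue $-1$ occurs with multiplicity $(s-1)(t+1)^2$, which is exactly the contribution of a partition of $V(G)$ into $(t+1)^2$ twin-cliques of size $s$ — the structure I aim to reconstruct.

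The heart of the proof is to produce this twin-clique partition, and this is where the hypothesis $t\ge 11(s+1)^3(s+2)$ is spent. Since $G$ has smallest eigenvalue $-s-1$ and, by the valency bound, very large valency, I would invoke the structure theory of graphs with bounded smallest eigenvalue (Hoffman-graph and quasi-clique techniques) to control the maximal cliques and the local graph at each vertex. Together with $\mu=2s$, a local count should show that the neighbourhood of every vertex splits into two large cliques that meet only in that vertex's \emph{fiber}, that these large cliques are incident like the rows and columns of a grid, and that each fiber has size exactly $s$. A further count using co-edge-regularity then shows that the $s$ vertices of a fiber have identical neighbourhoods outside the fiber, i.e.\ they are mutual twins. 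Hence the fibers partition $V(G)$ into $(t+1)^2$ twin-cliques of size $s$, and $G$ is the $s$-clique extension of the graph $H$ obtained by contracting fibers.

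It then remains to identify $H$. Since $G$ is the $s$-clique extension of $H$, the eigenvalues of $H$ are obtained from those of $G$ via $\theta\mapsto s\theta+(s-1)$, the eigenvalue $-1$ being absorbed by the fibers, so $H$ is regular with spectrum $\{(2t)^1,(t-1)^{2t},(-2)^{t^2}\}$. Having three distinct eigenvalues, $H$ is strongly regular with the parameters of the $(t+1)\times(t+1)$-grid and smallest eigenvalue $-2$. As $t$ is large we have $t+1\ge 5$, so the Shrikhande graph is excluded on parameters, and the classification of strongly regular graphs with smallest eigenvalue $-2$ (Theorem \ref{N51} with $\lambda=2$) forces $H$ to be the $(t+1)\times(t+1)$-grid. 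Therefore $G$ is its $s$-clique extension, as claimed.

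The main obstacle is the middle step: the spectrum by itself does not rule out irregular local configurations, and it is precisely the large, polynomial-in-$s$ lower bound on $t$ that makes it possible to pin down the sizes and mutual intersections of the large cliques and to prove that the fibers are genuine twin-cliques of size exactly $s$. Once the $s$-clique extension structure is established, the reduction to the strongly regular case is routine.
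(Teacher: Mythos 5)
First, a point of order: the paper does not prove this theorem at all; it is quoted verbatim from Hayat et al.\ \cite{HAYAT19} as background, so there is no in-paper proof to compare against. Judged on its own terms, your outline does follow the strategy of the actual proof in the cited reference: extract $n=s(t+1)^2$, $k=s(2t+1)-1$ and $\mu=2s$ from the spectrum (your $\mathrm{tr}(A^3)$ computation for $\mu$ is legitimate, since regularity plus co-edge-regularity fixes the triangle count at every vertex); use the smallest eigenvalue $-s-1$ together with the large valency to force the local structure; reconstruct the twin-fibers; and finish by passing to the quotient, which is a strongly regular graph with the spectrum $\{(2t)^1,(t-1)^{2t},(-2)^{t^2}\}$ of the $(t+1)\times(t+1)$-grid and is identified as such because $t+1\geq 5$ excludes the Shrikhande graph. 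The endgame is routine and correct (modulo a harmless slip: the map $\theta\mapsto s\theta+s-1$ sends eigenvalues of $H$ to eigenvalues of $G$, not the other way around).

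The genuine gap is that the entire technical core of the theorem --- the step you yourself flag as ``the heart of the proof'' --- is asserted rather than argued. Saying that ``a local count should show that the neighbourhood of every vertex splits into two large cliques that meet only in that vertex's fiber, \ldots and that each fiber has size exactly $s$'' names the desired conclusion, not a proof of it. Nothing in what you wrote explains how the spectrum alone (the graph is only \emph{co}-edge-regular, so $\lambda(x,y)$ may a priori vary over edges) rules out, say, a vertex whose neighbourhood decomposes into three or more large cliques, or fibers of unequal sizes whose average is $s$; indeed, Section 5 of this very paper constructs co-edge-regular graphs cospectral with clique extensions of Latin Square graphs that are \emph{not} clique extensions, which shows that this step cannot be soft and must genuinely exploit $\theta_{\min}=-2$ after quotienting, i.e.\ the specific grid-like parameters. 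In the reference this occupies several pages of quantitative work with quasi-cliques and Hoffman-graph representations, and it is exactly there that the hypothesis $t\geq 11(s+1)^3(s+2)$ enters with that specific polynomial. So: right skeleton, correct reduction at both ends, but the load-bearing middle is missing, and without it the argument does not establish the theorem.
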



\begin{thm}[{cf. \cite[Theorem 1]{TKX20}}]\label{Tan}
	Let $G$ be a co-edge-regular graph with spectrum
	\[
	\left\{(2st-3s-1)^{1}, (st-3s-1)^{t-1}, (-1)^{\frac{(s-1)t(t-1)}{2}},(-s-1)^{\frac{t(t-3)}{2}}
	\right\},
	\]
	where $s\geq2$ and $t\geq1$ are integers. If $t\geq48s$, then $G$ is the $s$-clique extension of the triangular graph $T(t)$.
\end{thm}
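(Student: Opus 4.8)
The plan is to mimic the argument behind Theorem~\ref{Hayat} for the grid: reduce $G$ to a strongly regular quotient with smallest eigenvalue $-2$ and then invoke Theorem~\ref{N51}. Since $G$ is connected and co-edge-regular it is regular, and its valency equals the largest eigenvalue, so $k=2st-3s-1$; summing the four multiplicities gives $n=|V(G)|=\tfrac{st(t-1)}{2}$, which matches $s\cdot|V(T(t))|=s\binom{t}{2}$. I would first pin down the feasible local parameters: the co-edge-regularity constant should come out as $\mu=4s$, and I would bound the number of common neighbours of an adjacent pair, distinguishing (as the target structure predicts) the pairs that are to become intra-fibre edges from the inter-fibre ones.

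The heart of the argument is to produce the clique geometry. Applying the Hoffman ratio (coclique) bound to the complement $\overline{G}$ --- equivalently, the clique bound governed by the second-largest eigenvalue $st-3s-1$ --- gives $\omega(G)\le s(t-1)$. I would then show that cliques attaining this bound (\emph{Delsarte cliques}) exist, that every vertex lies in exactly two of them, and that two distinct Delsarte cliques meet either in the empty set or in a common set of exactly $s$ vertices; in other words, that $G$ is geometric with respect to this family. \textbf{This is the step I expect to be the main obstacle}, precisely because only co-edge-regularity (and not edge-regularity) is assumed, so enough local regularity must first be extracted by hand. It is exactly here that the hypothesis $t\ge 48s$ is used: it forces the valency to be large relative to the smallest eigenvalue $-(s+1)$, so that interlacing applied to the subconstituents of a vertex controls the local graph and rules out irregular clique configurations.

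Granting the geometry, the $s$-subsets arising as pairwise intersections of Delsarte cliques partition $V(G)$ into cliques of size $s$ --- the \emph{fibres}; these should coincide with the classes of the true-twin relation $N[u]=N[v]$, consistent with the eigenvalue $-1$ of multiplicity $(s-1)\binom{t}{2}=\tfrac{(s-1)t(t-1)}{2}$. Contracting the fibres yields a graph $\Gamma$ on $\binom{t}{2}$ vertices; since the $s$-clique extension sends an eigenvalue $\theta$ of $\Gamma$ to $s\theta+(s-1)$, the three eigenvalues $2st-3s-1,\ st-3s-1,\ -s-1$ of $G$ come from the eigenvalues $2t-4,\ t-4,\ -2$ of $\Gamma$, with the inherited multiplicities $1,\ t-1,\ \tfrac{t(t-3)}{2}$. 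Hence $\Gamma$ is a connected strongly regular graph with smallest eigenvalue $-2$ and $\mu_{\Gamma}=4$, having exactly the parameters of $T(t)$.

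Finally I would identify $\Gamma$. Applying Theorem~\ref{N51} with $\lambda=2$, the graph $\Gamma$ is, apart from finitely many exceptions, a Steiner graph, a Latin square graph, or a complete multipartite graph; the conditions $\mu_{\Gamma}=4\neq 2$ and $\mu_{\Gamma}\neq k_{\Gamma}$ exclude the last two families, leaving a triangular (Steiner) graph with the parameters of $T(t)$. Because $t\ge 48s\ge 96$ is large, none of the finitely many exceptional strongly regular graphs sharing these parameters (the Chang graphs, which occur only at $t=8$) can arise, so $\Gamma\cong T(t)$; therefore $G$ is the $s$-clique extension of $T(t)$, as claimed.
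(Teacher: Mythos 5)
First, a point of reference: the paper does not prove this statement at all --- Theorem \ref{Tan} is quoted from Tan, Koolen and Xia \cite{TKX20} as background, so there is no internal proof to compare yours against, only the argument of \cite{TKX20}. Your numerology is correct: $n=\tfrac{st(t-1)}{2}$, $\mu=4s$, the eigenvalue correspondence $\theta\mapsto s\theta+s-1$ sending $2t-4,\ t-4,\ -2$ to the three non-$(-1)$ eigenvalues, and the ratio bound applied to $\overline{G}$ giving $\omega(G)\le s(t-1)$. You are right to route the clique bound through the complement: the naive Hoffman clique bound $1+k/(s+1)$ is not valid for regular graphs that are not strongly regular, and is in fact violated by the target graph itself. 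The endgame (quotient by the fibres, recognize the parameters of $T(t)$, exclude Latin square and complete multipartite graphs, note that the Chang graphs occur only at $t=8$) is also sound, modulo the usual caveat that Neumaier's theorem is only needed in the sharper classical form for smallest eigenvalue $-2$.

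The genuine gap is exactly the step you flag yourself: extracting the clique geometry and the partition into $s$-element fibres from co-edge-regularity plus the spectrum. As written, ``interlacing applied to the subconstituents controls the local graph and rules out irregular clique configurations'' is a hope, not an argument, and this very paper shows why it cannot be waved through: the twisted Latin Square graphs of Section \ref{tls} are co-edge-regular, have four distinct eigenvalues including $-1$, and are cospectral with $s$-clique extensions of strongly regular graphs, yet are not clique extensions at all. So a generic scheme of the form ``Delsarte cliques exist, every vertex lies in two of them, the pairwise intersections are twin classes, quotient'' provably fails for spectra of this general shape; whatever proof one writes must exploit quantitatively that the intended quotient has smallest eigenvalue $-2$ (equivalently, that the putative maximal cliques are enormous compared with $\mu=4s$), and that is precisely where Metsch-type clique-existence lemmas and the hypothesis $t\ge 48s$ have to do their work. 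Until that central step is carried out, you have a correct skeleton with its load-bearing bone missing.
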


Theorems \ref{Hayat} and Theorem \ref{Tan} suggest that similar results may hold in the general case.
Based on these results Tan et al. \cite{TKX20} conjectured the following:

\begin{conj}[{cf. \cite[Conjecture 3]{TKX20}}]\label{tan}
	Let $G$ be a connected $k$-regular graph with $n$ vertices and co-edge-regular with parameters $\mu$ having four distinct eigenvalues. 
	Let $t\ge 2$ be an integer. There exists a constant $n_t$ such that,
	if $\theta_{min}(G)\ge -t$, $n\ge n_t$ and $k< n-2-\frac{(t-1)^2}{4}$, then either $G$ is the
	$s$-clique extension of a strongly regular graph for  $2\le s\le t-1$ or $G$
	is a $p\times q$-grid with $p>q\ge 2$.
\end{conj}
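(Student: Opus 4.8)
Since the final statement is a \emph{conjecture} and the abstract announces that this paper refutes it, I would not try to prove it; the plan is instead to \emph{disprove} Conjecture~\ref{tan} by producing infinite families of connected co-edge-regular graphs with exactly four distinct eigenvalues and a fixed smallest eigenvalue that satisfy every hypothesis of the conjecture yet lie in neither permitted family. The most promising source of counterexamples is the set of \emph{cospectral mates} of the graphs the conjecture predicts. Recall that the $s$-clique extension $\Gamma[K_s]$ of a strongly regular graph $\Gamma$ on $v$ vertices with eigenvalues $k>r>-m$ has adjacency matrix $A(\Gamma)\otimes J_s + I_v\otimes(J_s-I_s)$, hence spectrum $\{sk+(s-1),\,sr+(s-1),\,-(s(m-1)+1),\,-1\}$ with the eigenvalue $-1$ of multiplicity $v(s-1)$. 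Thus $\Gamma[K_s]$ is itself co-edge-regular with exactly four distinct eigenvalues and smallest eigenvalue $-(s(m-1)+1)$, which stays fixed as $v$ grows provided $s$ and $m$ are fixed, and it lies in the allowed family. Consequently, if for infinitely many $v$ one can build a co-edge-regular graph that is cospectral with such an extension but is \emph{not} isomorphic to an $s$-clique extension of any strongly regular graph, and not to a grid, the conjecture collapses.

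I would take $\Gamma$ to be a Latin square graph $\mathrm{LS}_m(q)$ on $v=q^2$ vertices, for two reasons. First, Latin square graphs already fail to be determined by their spectrum at the strongly regular level, since for growing order there are many pairwise non-isomorphic ones sharing the same parameters and hence the same spectrum; one expects this ambiguity to persist, and indeed to be amplified, under clique extension. Second, their maximal cliques (rows, columns, symbols) are rigid enough to make membership in the clique-extension family analyzable. The construction then proceeds by fixing $m,s$, forming $\mathrm{LS}_m(q)[K_s]$, and applying a local, spectrum-preserving modification, namely Godsil--McKay switching on a carefully chosen regular switching set (equivalently, substitution of a sub-design by a non-isomorphic copy), to obtain a graph $G_q$. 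Since the modification preserves the adjacency spectrum, $G_q$ inherits the same four eigenvalues and the same fixed smallest eigenvalue automatically; one then checks that the switching set can be chosen so that $G_q$ stays regular, every non-adjacent pair retains the common-neighbour count $\mu$ (so that $G_q$ is still co-edge-regular), and the valency obeys the bound $k<|V(G_q)|-2-(t-1)^2/4$ throughout the family.

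The decisive step will be to prove that $G_q$ lies outside both families, and to do so uniformly over all large $q$ rather than in sporadic instances. Here I would exploit the twin structure: in any $s$-clique extension the vertices partition into bags of $s$ mutually adjacent twins (vertices with equal closed neighbourhoods), the bags are exactly the twin-equivalence classes, and contracting them returns the base graph, which would have to be strongly regular. The aim is to choose the switching set so that it destroys this structure, i.e.\ so that $G_q$ admits no partition into $s$-cliques of twins with strongly regular quotient. Ruling out the grid case is comparatively easy: for $s\ge 2$ and $m\ge 2$ the smallest eigenvalue $-(s(m-1)+1)$ is at most $-3$, whereas every $p\times q$ grid has smallest eigenvalue $-2$. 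The hard part, as usual with non-isomorphism claims for infinite families, is to turn the destruction of the twin structure into a proof valid for every $q$; I expect this to require a clean structural characterization of clique extensions inside the class of co-edge-regular graphs with four eigenvalues (for instance through the multiplicity of $-1$ and the geometry of the $\mu$-graphs) sharp enough to exclude the switched graphs. That characterization, together with the verification that switching preserves co-edge-regularity, is where I anticipate the main difficulty; the spectral bookkeeping and the fixed-smallest-eigenvalue requirement are routine by comparison.
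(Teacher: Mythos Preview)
Your instinct to disprove rather than prove the conjecture is correct, and your choice of target---cospectral mates of clique extensions of Latin square graphs---matches the paper's second and more striking family. However, the route you propose diverges from the paper in two essential ways, and one of them is a genuine gap.

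First, the paper does \emph{not} obtain its counterexamples by Godsil--McKay switching. It builds the twisted Latin square graphs $TLS(q,n)$ directly: the vertex set is $\mathbb{F}_q^3\times[n^2]$, and adjacency is defined via $q+1$ parallel classes of planes in $\mathbb{F}_q^3$ (Theorem~\ref{cut}) combined with a group-divisible orthogonal array $GOA(n,q,q+1)$. The paper even remarks that $TLS(q,n)$ is not switching-equivalent to the $q$-clique extension of $LS_{q+1}(qn)$ for $n\ge 6$, so your switching mechanism would not reach these graphs. The advantage of the explicit construction is that co-edge-regularity and the full spectrum can be verified by hand (Lemmas~\ref{reg}--\ref{plane}, Propositions~\ref{wer}--\ref{scer}), bypassing entirely the issue you flagged as your main difficulty: there is no general reason Godsil--McKay switching should preserve the co-edge-regular parameter $\mu$, and you offer no concrete switching set for which it does. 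That is the gap in your plan.

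Second, the paper's non-isomorphism certificate is cleaner than twin analysis. Rather than arguing about twin partitions, it introduces the \emph{level} of a co-edge-regular graph---the number of distinct values $\lambda(x,y)$ over edges $xy$---and computes directly (Proposition~\ref{lambda}) that $TLS(q,n)$ has level~$3$, whereas every $s$-clique extension of a strongly regular graph has level~$2$. This single invariant disposes of both forbidden families at once.

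Finally, you miss the paper's first family altogether: starting from a resolvable $2$-$(v,t,1)$ design, the Haemers--Tonchev construction (Theorem~\ref{ht}) yields co-edge-regular graphs of level~$2$ with four distinct eigenvalues and fixed smallest eigenvalue $-t-1$, none of which has $-1$ in its spectrum. These cannot be cospectral with any clique extension, so they refute the conjecture for a reason orthogonal to your cospectral-mate strategy.
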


However, we will show that Conjecture \ref{tan} is false,
by providing two infinite families of co-edge-regular graphs. This demonstrates that the class of co-edge-regular graphs is much more complicated than previously thought.
The first family is based on results of Haemers and Tonchev \cite{HT96}. The graphs in this family do not have $-1$ as an eigenvalue.

The second family is much more surprising since they have $-1$ as an eigenvalue and cospectral with certain clique-extensions of certain Latin Square graphs.
In order to state the following result we will need to introduce co-edge-regular graphs of  level $t$. For a pair $x,y$ of adjacent vertices in a graph $G$, 
let $\lambda(x,y)$ be the number of common neighbours of $x$ and $y$. We say that a co-edge-regular graph is of {\em level} $t$ if 
$\#\{\lambda(x,y) \mid x,y$ are adjacent vertices $\} = t$.
Note that, in the literature, see for example \cite{Gold06}, the notion of a quasi-strongly regular graph of grade $t$ exists. Such a graph is just the complement of a co-edge-regular graph 
of level $t$. 

\begin{thm}\label{intTLS}
Let $q$ be a prime power. For any positive integer $s\geq 2$, such that $q$ is a factor of $s$, there are infinitely many integers $n$ for which a co-edge-regular graph with level at 
least $3$ exists that is cospectral with the $s$-clique extension of each Latin Square graph $LS_{q+1}(qn)$ and hence has exactly four distinct eigenvalues.
\end{thm}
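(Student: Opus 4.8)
The plan is to first pin down the exact spectrum we must reproduce. Writing $A$ for the adjacency matrix of $LS_{q+1}(qn)$, the $s$-clique extension has adjacency matrix $(A+I)\otimes J_s-I$, where $J_s$ is the all-ones matrix of size $s$. Since $LS_{q+1}(qn)$ is strongly regular with eigenvalues $(q+1)(qn-1)$, $qn-q-1$ and $-(q+1)$, of multiplicities $1$, $(q+1)(qn-1)$ and $(qn-1)q(n-1)$, the eigenvalues $s(\theta+1)-1$ of $(A+I)\otimes J_s-I$ coming from the all-ones eigenvector of $J_s$ are $s(q+1)(qn-1)+s-1$, $sq(n-1)-1$ and $-sq-1$, while the kernel of $J_s$ contributes $-1$ with multiplicity $(s-1)(qn)^2$. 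Thus the target spectrum has exactly four distinct eigenvalues, smallest $-sq-1$, and $-1$ present; any graph we build must match these four values with these multiplicities.

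The main construction will produce, for infinitely many $n$, a graph $\Gamma$ on $s(qn)^2$ vertices with this spectrum but with at least three distinct values of $\lambda(x,y)$ over edges. I would work from the net description: $LS_{q+1}(qn)$ is the point graph of a transversal design $TD(q+1,qn)$, equivalently a system of $q-1$ mutually orthogonal Latin squares (MOLS) of order $qn$. Such a system exists for infinitely many $n$ by the product construction, multiplying the complete set of $q-1$ MOLS of order $q$ (available because $q$ is a prime power, i.e.\ the affine plane $AG(2,q)$ exists) with MOLS of order $n$; this is the source of the ``infinitely many $n$.'' The hypothesis $q\mid s$, say $s=qm$, then lets me refine the clique extension: rather than replace each point by a featureless clique $K_s$ and join adjacent cliques completely, I split each $s$-clique into $q$ blocks of size $m$ and re-route the joins between blocks using the parallelism of $AG(2,q)$. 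The point of using $AG(2,q)$ is that its own point graph is complete and contributes no new eigenvalue, so a substitution governed by it has a chance of being spectrum-neutral.

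Next I would verify that $\Gamma$ is co-edge-regular with the same $\mu$ as the clique extension --- every pair of non-adjacent vertices keeps a constant number of common neighbours --- and then compute $\lambda(x,y)$ for the representative edge types created by the rewiring: a pair inside a single $m$-block, a pair in two blocks of the same cell, and a pair in adjacent cells. Exhibiting three distinct values here is exactly the statement that $\Gamma$ has level at least $3$. Since a strongly regular graph has level $1$ and its $s$-clique extension has level at most $2$, this simultaneously shows $\Gamma$ is not a clique extension of a strongly regular graph, which is the whole point: it is a genuine cospectral mate of the Latin-square clique extension.

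The hard part will be proving cospectrality with full multiplicities, i.e.\ that the rewiring leaves the characteristic polynomial unchanged. I would try to realize the modification as a switching with built-in spectral invariance --- a Godsil--McKay-type switch on the blocks, or the replacement of a clique-join gadget by a cospectral affine-plane gadget --- and then prove invariance by producing an equitable partition that refines both $\Gamma$ and the clique extension and whose quotient matrix is common to both, with the complementary action carried by the adjacency algebra of $AG(2,q)$ so that no new eigenvalues appear. The two delicate points are (i) checking the combinatorial regularity condition that makes the switch spectrum-preserving --- this is precisely where $q\mid s$ is needed, to arrange that each outside vertex meets every switched block in an admissible number of neighbours --- and (ii) confirming that after switching the level genuinely rises to at least $3$ rather than collapsing back to $2$. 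Securing both at once is the crux; the remaining checks are routine bookkeeping with the strongly-regular parameters of $LS_{q+1}(qn)$.
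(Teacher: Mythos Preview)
Your proposal is a research outline rather than a proof: the construction is never made precise, and the cospectrality step is left at ``I would try to realize the modification as a switching\ldots\ or the replacement of a clique-join gadget by a cospectral affine-plane gadget.'' Relative to the paper's argument there are two concrete gaps.

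First, the geometric input you invoke is $AG(2,q)$, but the paper's construction is genuinely three-dimensional. The vertex set of the twisted Latin Square graph $TLS(q,n)$ is $\mathbb{F}_q^3\times[n^2]$, and adjacency is governed by $q+1$ parallel classes of \emph{planes} in $\mathbb{F}_q^3$ with the property that any two planes from different classes meet in a line and any three from different classes meet in a single point (Theorem~\ref{cut}). This triple-intersection condition, together with a group-divisible orthogonal array $GOA(n,q,q+1)$ rather than an $OA(qn,q+1)$, is what makes the local computations close up; the $2$-dimensional parallelism you describe does not supply it, and your sketch gives no indication of how the rewiring would actually be specified.

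Second, and more importantly, the paper does not obtain cospectrality by switching or by comparing with the clique extension at all. It proves an independent combinatorial characterization (Theorem~\ref{4ev}): a connected non-complete regular co-edge-regular graph has at most four distinct eigenvalues if and only if it is $(\alpha,\beta)$-weakly edge-regular and $(\mu,\gamma)$-strongly co-edge-regular. One then verifies these two conditions directly for $TLS(q,n)$ (Propositions~\ref{wer} and~\ref{scer}), produces one nontrivial eigenvalue from the equitable $2$-partition given by a single large clique $C(s,t,\ell)$ (Lemma~\ref{ev}), and solves for the remaining two eigenvalues from the elementary symmetric relations in Theorem~\ref{4ev}. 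The level-$3$ statement is a by-product of the same local analysis (Proposition~\ref{lambda}), not a separate check after the fact. Your plan correctly senses that affine geometry over $\mathbb{F}_q$ and the divisibility $q\mid s$ are the right ingredients, but the characterization theorem---which replaces your self-described ``hard part'' entirely---is the key idea you are missing. Incidentally, the paper remarks that $TLS(q,n)$ is not switching equivalent to the $q$-clique extension of $LS_{q+1}(qn)$ for $n\ge 6$, so at least the Seidel-switching route is closed.
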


Note that the $s$-clique extension of any non-complete connected strongly regular graph has exactly four distinct eigenvalues and is co-edge-regular with level $2$. 

Theorem \ref{intTLS} demonstrates that to give a spectral characterization of the clique-extensions of strongly regular graphs with smallest eigenvalue at most $-3$ is impossible, even if you assume that the graphs are co-edge-regular.



This paper is organized as follows. In Section 2, we provide definitions and preliminaries. Section 3 presents a combinatorial characterization of connected co-edge-regular graphs with 
exactly four distinct eigenvalues. In Section 4, we discuss a result from Haemers and Tonchev \cite{HT96} and, as a consequence, construct an infinite family of connected co-edge-
regular graphs of level $2$ with exactly four distinct eigenvalues. Finally, in Section 5, we will construct co-edge-regular  graphs of level $3$ that are cospectral certain clique-extensions of certain Latin 
Square graphs. The existence of this family shows Theorem \ref{intTLS}.

\section{Definitions and Preliminaries}
\subsection{Graphs}
A graph $G$ is an ordered pair $(V(G),E(G))$, where $V(G)$ is a finite set and $\displaystyle E(G)\subseteq \binom{V(G)}{2}$. The set $V(G)$ (resp.\ $E(G)$) is called the vertex set (resp.\ edge set) of $G$, and the cardinality of $V(G)$ (resp.\ $E(G)$) is called the order (resp.\ size) of $G$ and is denoted by $n(G)$ (resp.\ $e(G)$). The adjacency matrix of $G$, denoted by $A(G)$, is a symmetric $(0,1)$-matrix indexed by $V(G)$, such that $(A(G))_{xy}=1$ if and only if $xy$ is an edge in $G$. The eigenvalues of $G$ are the eigenvalues of $A(G)$, and the spectral radius, i.e. the largest eigenvalue, of $G$ is denoted by $\rho(G)$. 
Two graphs are called {\em cospectral} if they have the same spectrum.

If $x$ is a vertex of $G$ then $N_G(x) := \{y \in V(G) \mid x \sim y \}$. The subgraph induced on $N_G(x)$ denoted by $\Delta_G(x)$, is called the local graph at $x$. 
If it is clear what is the graph $G$, we omit $G$ in the notation.
A {\em clique} (resp.\ {\em co-clique}) in a graph $G$ is a set of vertices such that each pair of distinct vertices in it are adjacent (resp.\ non-adjacent).
We sometimes consider a clique (resp.\ a co-clique) of $G$ also as an induced subgraph of $G$. 
 
 Let $G$ be a graph.
 		Let $\pi:=\{V_1,\ldots,V_m\}$ be a partition of vertex set $V(G)$, and, for $1\leq i,j\leq m$ and $u\in V_i$, let $b_{ij}(u)$ be the number of neighbors of $u$ in $V_j$. Then partition $\pi$ is called \emph{equitable} if $b_{ij}(u)$
 		is independent from the concrete choice of $u\in V_i$.
 		In this case, the $m\times m$ matrix $B:=(b_{ij})$ is called the \emph{quotient matrix} of $\pi$. It is well known that for an equitable partition of $G$ with quotient matrix $B$, the eigenvalues of $B$ are also eigenvalues of $A(G)$, see for example \cite[Theorem 9.3.3]{god01}.

\subsection{Edge-regular and co-edge-regular graphs}
 For an edge $xy$ in a graph $G$ we denote by $\lambda(x, y)$, the number of common neighbours of $x$ and $y$ in $G$. For two non-adjacent vertices 
 $x$ and $y$ we denote by $\mu(x, y)$, the number of common neighbours of $x$ and $y$ in $G$.
 
 A regular graph is {\em edge-regular} with parameter $\lambda$ if $\lambda(x,y) = \lambda$ for all edges $xy$ of $G$.
 An edge-regular graph has {\em level} $t$ if $\#\{\mu(x,y) \mid x,y $ are non-adjacent distinct vertices$\} = t$. This notion is also known as a quasi-strongly regular graph with grade $t$, see for example \cite{Gold06}.

 A regular graph is \emph{co-edge-regular} with parameter $\mu$ 
 if any two distinct non-adjacent vertices have exactly $\mu$ common neighbors. We say that a co-edge-regular graph has {\em level} $t$ if 
 $\#\{\lambda(x,y) \mid x,y $ adjacent vertices$\} = t$.
 Note that the complement of an edge-regular graph is co-edge-regular and vice versa and that the complement of an edge-regular graph which has  level $t$ is co-edge-regular and has  level $t$ 
 and vice versa. 

 A graph $G$ is called {\em strongly regular} with parameters $(n, k, \lambda, \mu)$ if it has $n$ vertices, is $k$-regular, is edge-regular with parameter $\lambda$ and co-edge-regular with parameter $\mu$. 
 
 The following result is well-known, see \cite[Lemma 10.2.1]{god01}.
 \begin{lem}\label{srg}
 A connected non-complete regular graph $G$ is strongly regular if and only if it has exactly three distinct eigenvalues. 
 \end{lem}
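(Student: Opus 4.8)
The plan is to work throughout with the adjacency matrix $A = A(G)$ and to exploit the all-ones matrix $J$ together with the fact that, since $G$ is connected and $k$-regular, the all-ones vector $\mathbf{1}$ spans the eigenspace for the eigenvalue $k$, which is simple by Perron--Frobenius. The one identity to keep in mind is that $\frac{1}{n}J$ is exactly the orthogonal projection onto $\langle \mathbf{1}\rangle$, so that any polynomial in $A$ which annihilates $\mathbf{1}^{\perp}$ must be a scalar multiple of $J$.

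For the forward implication, I would start from the defining identity of a strongly regular graph. Counting walks of length two between two vertices $x,y$ according to whether $x=y$, $x\sim y$, or $x\neq y$ with $x\not\sim y$, one obtains
\[
A^2 = kI + \lambda A + \mu(J - I - A) = (k-\mu)I + (\lambda-\mu)A + \mu J.
\]
Applying both sides to any eigenvector $v \perp \mathbf{1}$ (so that $Jv=0$) shows that the corresponding eigenvalue $\theta$ satisfies the quadratic $\theta^2 - (\lambda-\mu)\theta - (k-\mu) = 0$, which has at most two roots. Together with the eigenvalue $k$ coming from $\mathbf{1}$, this gives at most three distinct eigenvalues. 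To upgrade ``at most three'' to ``exactly three'', I would invoke the standard fact that a connected graph with at most two distinct eigenvalues is either edgeless or complete; since $G$ is connected and non-complete it therefore has at least three, hence exactly three.

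For the reverse implication, suppose $G$ has exactly three distinct eigenvalues $k > \theta_1 > \theta_2$. I would form the matrix $p(A) := (A - \theta_1 I)(A - \theta_2 I)$. By construction $p(A)$ annihilates every eigenvector for $\theta_1$ and $\theta_2$, i.e.\ it vanishes on $\mathbf{1}^{\perp}$, while $p(A)\mathbf{1} = (k-\theta_1)(k-\theta_2)\mathbf{1}$. By the projection remark above, $p(A) = cJ$ with $c = (k-\theta_1)(k-\theta_2)/n$, that is,
\[
A^2 = (\theta_1+\theta_2)A - \theta_1\theta_2 I + cJ.
\]
Reading off the $(x,y)$-entry of this identity and again interpreting $(A^2)_{xy}$ as a count of common neighbours, the diagonal gives the (constant) valency, an adjacent pair $x\sim y$ gives $\lambda(x,y)=\theta_1+\theta_2+c$, and a non-adjacent pair gives $\mu(x,y)=c$. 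Both values are independent of the chosen pair (such pairs exist because $G$ is connected with edges and non-complete), so $G$ is edge-regular and co-edge-regular, i.e.\ strongly regular.

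I expect the only genuinely delicate points --- rather than serious obstacles --- to be the two structural inputs about eigenvalue multiplicities: the simplicity of $k$ (needed so that $\frac{1}{n}J$ is the Perron projector) and the classification of connected graphs with at most two distinct eigenvalues (needed to rule out the degenerate cases in the forward direction). Everything else is the routine bookkeeping of the entries of $A^2$.
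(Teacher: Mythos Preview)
Your proof is correct. The paper does not actually prove this lemma; it merely cites it as well-known, referring to \cite[Lemma 10.2.1]{god01}. Your argument is precisely the standard one found there: the identity $A^2=(k-\mu)I+(\lambda-\mu)A+\mu J$ for the forward direction, and the factorization $(A-\theta_1 I)(A-\theta_2 I)=cJ$ for the converse, together with Perron--Frobenius to ensure the eigenspace for $k$ is exactly $\langle\mathbf{1}\rangle$. Nothing is missing.
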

 
 This means that a connected non-complete regular graph is a co-edge-regular graph of level 1 if and only if it has exactly three distinct eigenvalues.

 Goldberg \cite{Gold06} conjectured a possible extension of a result from Juri\v si\'c et al. \cite{JAKT00} who showed it for distance-regular graphs.
 \begin{conj}[{cf. \cite[Conjecture 1]{Gold06}}]\label{conj1}
 	Let $G$ be a $k$-regular and edge-regular graph with parameter $\lambda$, and let $\theta,\theta'$ be two distinct eigenvalues of $G$, different from $k$. Then
 	$$(\theta+\frac{k}{\lambda+1})(\theta'+\frac{k}{\lambda+1})\geq-\frac{k\lambda(k-\lambda-1)}{(\lambda+1)^2}.$$
 \end{conj}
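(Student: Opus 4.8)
The plan is to adapt the local eigenvalue technique of Juri\v si\'c, Koolen and Terwilliger, pushing it as far as edge-regularity allows and pinpointing exactly where distance-regularity was being used. Throughout write $b=\frac{k}{\lambda+1}$, $b_1=k-\lambda-1$, and let $k=\theta_0>\theta_1\ge\cdots\ge\theta_{\min}$ be the eigenvalues of $G$, with $\mathbf 1$ spanning the $k$-eigenspace (connectedness makes $k$ simple). The engine is the observation that every eigenvalue other than $k$ lies in $[\theta_{\min},\theta_1]$, so the symmetric operator $M:=(A-\theta_1 I)(A-\theta_{\min}I)$ is negative semidefinite on $\mathbf 1^{\perp}$; hence for every $u\perp\mathbf 1$,
\[
\|Au\|^2-(\theta_1+\theta_{\min})\langle Au,u\rangle+\theta_1\theta_{\min}\|u\|^2\le 0. \qquad (\star)
\]
First I would reduce the general statement to the extreme pair: since $b>0$, the product $(\theta+b)(\theta'+b)$ can only be negative when its two factors have opposite signs, and then it is bounded below by $(\theta_1+b)(\theta_{\min}+b)$ (largest factor times smallest factor); when all factors share a sign the product is nonnegative and the claimed bound is trivial. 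So it suffices to prove the inequality for $\theta=\theta_1$ and $\theta'=\theta_{\min}$.

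The heart is to feed $(\star)$ a test vector that sees the edge-regular structure. Fix a vertex $x$, write $N=N(x)$, and record the identity $Ae_N=k e_x+\lambda e_N+s$, where $s=\sum_{w}\mu(x,w)e_w$ is supported at distance $2$ and the coefficient $\lambda$ on $N$ is precisely edge-regularity. Applying $(\star)$ to the radial family $u=e_N-\beta e_x$ (corrected to lie in $\mathbf 1^{\perp}$), every inner product is computable from the controlled data $|N|=k$, $\langle Ae_N,e_N\rangle=k\lambda$, and the path count $\sum_{w}\mu(x,w)=k b_1$, with the single exception of $\|s\|^2=\sum_{w}\mu(x,w)^2$. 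With the structurally meaningful weight $\beta=b=\frac{k}{\lambda+1}$ (which is the multiplicity of the top local eigenvalue when $\Delta(x)$ is a disjoint union of $(\lambda+1)$-cliques) the eigenvalue part of $(\star)$ collapses to the quantity $(\theta_1+b)(\theta_{\min}+b)$ up to the uncontrolled term $\|s\|^2$. Equivalently one may extend a local eigenvector $\xi$ of $\Delta(x)$ with eigenvalue $\eta$ to $\tilde\xi$, so that $A\tilde\xi=\eta\tilde\xi+\zeta$ with $\zeta\perp\tilde\xi$ supported at distance $2$; then $(\star)$ already yields the clean fact $\theta_{\min}\le\eta\le\theta_1$ for all local eigenvalues, and the $2$-dimensional span $\langle\tilde\xi,\zeta\rangle$ carries the refined information.

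In the distance-regular case this closes: the distance-$2$ data degenerate to the single intersection number $c_2$, so $\|s\|^2$ and the action of $A$ on $\langle e_x,e_N,s\rangle$ (resp. on $\langle\tilde\xi,\zeta\rangle$) are completely determined, the module is at most three- (resp. two-) dimensional with eigenvalues among the $\theta_i$, and the $2\times 2$/$3\times 3$ eigenvalue bookkeeping returns precisely $(\theta_1+b)(\theta_{\min}+b)\ge-\frac{k\lambda b_1}{(\lambda+1)^2}$, with equality detecting the tight graphs. I would first reproduce this computation to fix the constants and to confirm that the optimal weight and the equality configuration (local graph a disjoint union of $(\lambda+1)$-cliques, local eigenvalue $-1$) are the correct ones.

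The main obstacle is that for a merely edge-regular graph the second moment $\|s\|^2=\sum_{w}\mu(x,w)^2$ — indeed the whole distance-$2$ incidence pattern around $x$ — is not determined by $(k,\lambda)$, whereas the first moment $\sum_{w}\mu(x,w)=k b_1$ is. The only cost-free control is $\mu(x,w)\ge 1$ on $\Gamma_2(x)$, giving $\|s\|^2\ge k b_1$; but inserting this crude estimate produces an inequality that is in general weaker than, and not manifestly equal to, the target constant $\frac{k\lambda b_1}{(\lambda+1)^2}$. Closing this gap — squeezing the second moment to the value forced in the distance-regular world, presumably via a convexity/Cauchy--Schwarz argument on the $\mu$-distribution together with an averaging of $(\star)$ over all vertices to turn pointwise data into global path counts — is exactly where the difficulty of the conjecture is concentrated, and is the step I expect to be the real sticking point.
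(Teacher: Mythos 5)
The statement you are trying to prove is not a theorem of the paper: it is Goldberg's conjecture, which the authors quote only in order to refute it. Immediately after stating it they observe that, for any integer $s\geq 2$ and any integer $t\geq 3$, the complement of the $s$-clique extension of a Latin Square graph $LS_t(n)$ violates the inequality once $n$ is sufficiently large, and that the complements of their twisted Latin Square graphs $TLS(q,n)$ from Section 5 provide further counterexamples. So there is no proof to compare yours against; any attempted proof must break somewhere.

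Your write-up in fact locates the break correctly. The local-eigenvalue machinery of Juri\v si\'c, Koolen and Terwilliger needs the distance-$2$ data around a vertex --- in your notation the second moment $\sum_w \mu(x,w)^2$ --- to be pinned down by the parameters, while edge-regularity only controls the first moment $\sum_w \mu(x,w)=k(k-\lambda-1)$. You flag closing this gap as ``the real sticking point'' and propose to squeeze the second moment via convexity and averaging. That step cannot be completed: in the counterexamples above the $\mu$-distribution is genuinely non-constant (these complements are edge-regular of level at least $2$), the second moment exceeds the value forced in the distance-regular world by an amount growing with $n$, and the resulting defect overwhelms the target constant $\frac{k\lambda(k-\lambda-1)}{(\lambda+1)^2}$. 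Your reduction to the extreme pair $(\theta_1,\theta_{\min})$ and the negative-semidefiniteness inequality are sound as far as they go, but the conjecture is simply false outside the distance-regular (or co-edge-regular-complement of level $1$) setting, so the honest endpoint of your analysis is a counterexample, not a proof.
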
	
 
 However, it is worth mentioning that, for any integer $s \geq 2$ and any integer $t \geq 3$, the complement of the $s$-clique extension of a Latin Square graph $LS_{t}(n)$ 
 (see Subsection \ref{LS} for a definition)
 disproves this conjecture, if $n$ is sufficiently large. Moreover, the complement of our construction in Section \ref{tls} also provides counterexamples to this conjecture.

 \subsection{Designs}
 A pair $(\mathcal{P}, \mathcal{B})$  is a {\em $2$-$(v, k, \lambda)$-design} if 
 the following conditions are satisfied:
 \begin{enumerate}
 \item $|\mathcal{P}| = v$;
 \item For all $B \in \mathcal{B}$, we have $B \subseteq \mathcal{P}$ and $|B|= k$;
 \item For each pair of distinct elements $x, y$ of $\mathcal{P}$ there are exactly $\lambda$ elements $B$ of $\mathcal{B}$ such that both $x$ and $y$ are in $B$.
 \end{enumerate}
 
 We call the elements of $\mathcal{P}$ {\em points} and the elements of $\mathcal{B}$ {\em blocks}. 
  
A  {\em $2$-$(v, k, \lambda)$-design} $\mathcal{D} = (\mathcal{P}, \mathcal{B})$ is called {\em resolvable} if we can partition the block set $\mathcal{B}$ into parts $\mathcal{B_1}, \mathcal{B_2},
\ldots \mathcal {B}_t$ such that $\mathcal{B}_i$ partition $\mathcal{P}$ for all $i = 1,2, \ldots, t$. We call the partition $\{ \mathcal{B_1}, \mathcal{B_2}, \ldots, \mathcal{B}_t\}$ 
a {\em resolution} of $\mathcal{D}$. 

The {\em block graph} of a $2$-$(v, k, 1)$-design $(\mathcal{P}, \mathcal{B})$ is the graph with vertex set $\mathcal{B}$ and two distinct blocks are adjacent if they intersect.
 Note that the block graph of a $2$-$(v, k, 1)$-design is a strongly regular graph with smallest eigenvalue $-k$, see \cite[Section 8.5.4A]{BrVM2022}.

 \subsection{Association schemes}
In this section, we introduce the definition of symmetric association schemes. We will not introduce the Bose-Mesner algebra for them.  
The reader is referred to \cite{BBIT21,god01} for more information.

Let $X$ be a finite set and let $\left\{R_{0}, R_{1}, \ldots, R_{D}\right\}$ be a set of non-empty symmetric binary relations on $X$ which partition $X \times X$. For any $i$ $(0\leq i \leq D)$, define the matrix  $A_i$ with entries $0$ and $1$ such that $(A_{i})_{xy}=1$ if and only if $(x,y)\in R_i$. The pair $\mathfrak{X}=(X,\left\{R_{i}\right\}_{i=0}^{D})$ is called a (\textit{symmetric}) \emph{association scheme with $D$ classes} if the following conditions hold:
\begin{enumerate}
  \item $A_{0}=I_{|X|}$, which is the identity matrix of order $|X|$,
  \item $\sum_{i=0}^{D} A_{i}=J_{|X|}$, the all-ones matrix of order $|X|$,
  \item $A_{i}^{\top}=A_{i}$ for all $i\in \{0,1,\ldots, D\}$, where $A_i^\top$ is the transpose of $A_i$,
  \item $A_{i} A_{j}=\sum_{h=0}^{D} p_{i j}^{h} A_{h}$, where $p_{i j}^{h}$ are nonnegative integers, such that for all $x, y \in$ $X$ with $(x, y) \in R_{h}$ the number of $z \in X$ with $(x,z)\in R_i$ and $(z,y)\in R_j$ equals $p_{i j}^{h}$.
\end{enumerate}
We call $R_0$ the \emph{trivial} relation and the other relations are called \emph{non-trivial}. The integers $p_{i j}^{h}$ are called the \emph{intersection numbers} of $\mathfrak{X}$. 
Any association scheme in this paper is a symmetric association scheme. We also call $\mathfrak{X}$ a \emph{$D$-class association scheme}. The matrices $A_i,$ $i=0, 1, \ldots, D$ 
are called the \emph{relation matrices} of the association scheme $\mathfrak{X}$, and the graph $G_i= (X, R_i)$, with adjacency matrix 
$A_i$, is called the \emph{relation graph} for the relation $R_i$ for $i =0, 1, \dots, D$. Note that $G_i$ is $k_i$-regular where $k_i = p^0_{i i}$, and we say that $k_i$ is the valency of the 
relation $R_i$. Note that any relation graph in an association scheme $\mathfrak{X}$ with $D$ classes has at most $D+1$ distinct eigenvalues.  

Van Dam gave a characterization of edge-regular graphs of level 2 with four distinct eigenvalues in \cite{van99}.
\begin{thm}[{cf. \cite[Theorem 5.1]{van99}}]\label{level2}
	Let $G$ be a connected regular graph that has four distinct eigenvalues. Then $G$ is edge-regular of level 2 if and only if $G$ is one of the classes of a 3-class symmetric association scheme.
\end{thm}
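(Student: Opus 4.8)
The plan is to prove the two implications separately, in both cases exploiting the elementary spectral fact that a connected $k$-regular graph on $n$ vertices with exactly four distinct eigenvalues has adjacency algebra $\mathcal{A} := \langle I, A, A^2, A^3\rangle$ of dimension exactly $4$, and that $J \in \mathcal{A}$. The latter holds because the all-ones vector spans the simple $k$-eigenspace, so $\frac1n J$ equals the spectral idempotent $E_0$, which is a degree-$3$ polynomial in $A$ by Lagrange interpolation over the four eigenvalues.

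For the backward direction, suppose $G$ equals a relation graph $G_i$ of a $3$-class symmetric association scheme. For any edge $xy$ of $G$, i.e.\ $(x,y)\in R_i$, the number of common neighbours equals the intersection number $p_{ii}^{i}$, so $G$ is edge-regular with $\lambda = p_{ii}^{i}$. For a non-adjacent pair $x\neq y$, the pair lies in one of the two remaining non-trivial relations $R_j$ ($j\neq 0,i$), and then $\mu(x,y)=p_{ii}^{j}$; hence the set of $\mu$-values is $\{p_{ii}^{j} : j\neq 0,i\}$, which has at most two elements. It cannot have exactly one element: if it did, $G$ would be strongly regular and thus have only three distinct eigenvalues by Lemma~\ref{srg}, contradicting the hypothesis. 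Since both remaining relations are non-empty, both values are attained, so $G$ is edge-regular of level $2$.

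For the forward direction, assume $G$ is edge-regular with parameter $\lambda$ and of level $2$, with the two distinct $\mu$-values $\mu_1\neq \mu_2$. I would partition the non-edges of $G$ according to these values, letting $N_1, N_2$ be the $0$-$1$ matrices recording the pairs $x\neq y$ with $\mu(x,y)=\mu_1$ and $\mu(x,y)=\mu_2$ respectively; both are nonzero since level is exactly $2$. Reading off the entries of $A^2$ gives $A^2 = kI + \lambda A + \mu_1 N_1 + \mu_2 N_2$, while by construction $N_1 + N_2 = J - I - A$. As $\mu_1\neq\mu_2$, this $2\times 2$ linear system solves for $N_1$ and $N_2$ as explicit combinations of $I, A, A^2$ and $J$; since $J\in\mathcal{A}$, we get $N_1, N_2 \in \mathcal{A}$. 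Setting $A_0 = I$, $A_1 = A$, $A_2 = N_1$, $A_3 = N_2$, these four nonzero symmetric $0$-$1$ matrices have pairwise disjoint supports summing to $J$, hence are linearly independent and all lie in the $4$-dimensional algebra $\mathcal{A}$, so their span is exactly $\mathcal{A}$. Because $\mathcal{A}$ is closed under multiplication, each product $A_iA_j$ lies in $\text{span}\{A_0,A_1,A_2,A_3\}$, so $A_iA_j = \sum_h p_{ij}^h A_h$; as the $A_h$ have disjoint $0$-$1$ supports, $p_{ij}^h$ equals the $(x,y)$-entry of $A_iA_j$ for any $(x,y)\in R_h$, a nonnegative integer. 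This verifies all the association-scheme axioms, with $G = G_1$ as one of the three classes.

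The crux of the argument, and the step I expect to require the most care, is the forward direction's reduction of the combinatorial hypotheses to membership in $\mathcal{A}$: one must simultaneously use edge-regularity (to linearize $A^2$ on edges), the level-$2$ hypothesis with $\mu_1\neq\mu_2$ (to invert the $2\times 2$ system and recover $N_1,N_2$), and the spectral input that four distinct eigenvalues force $\dim\mathcal{A}=4$ together with $J\in\mathcal{A}$. Once these are in place, closure of the algebra under multiplication makes the intersection numbers automatic, so no separate verification of commutativity or of the $p_{ij}^h$ being well defined on each relation is needed.
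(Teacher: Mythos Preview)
The paper does not supply its own proof of this statement: it is quoted verbatim from Van Dam~\cite{van99} and treated as a known result, both in Section~2.4 and again (in contrapositive form) at the end of Section~3. Your proof is correct and is essentially the standard argument one would expect in Van Dam's original. The only remark the paper adds is that its Theorem~\ref{4ev} \emph{generalizes} this result, since relation graphs of $3$-class schemes have at most four distinct eigenvalues; but the paper does not spell out a derivation of Theorem~\ref{level2} from Theorem~\ref{4ev}, so there is nothing further to compare against.
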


\subsection{Orthogonal arrays}\label{LS}

An {\em orthogonal array} $O$ of order $(n, t)$, denoted by  $OA(n, t)$, is a $t \times n^2$ array such that each entry is an element of $[n]$ and 
$\#\{ (r_i(\ell), r_j(\ell)) \mid \ell =1,2, \ldots, n^2\} = n^2$ for $1 \leq i < j \leq t$, where $r_i$ is the $i$-th row of $O$. 
Note that  from an $OA(n, t)$, we can construct $t-2$ mutually orthogonal Latin Squares of order $n$ for $t \geq 3$, and vice versa. 
A {\em group-divisible orthogonal array} $O$ of order $(n, s, t)$, denoted by $GOA(n, s, t)$ with groups  $G_1, G_2, \ldots, G_t$, each consisting of distinct $s$ rows 
is an $st \times n^2$-array such that $\#\{ (r_i(\ell), r_j(\ell)) \mid \ell =1,2, \ldots, n^2\} = n^2$, whenever $r_i$ belongs to a different group than $r_j$. 
We allow repeated rows in a group.

If there exists an $OA(n, t)$, then, for all $s \geq 2$, there exists a $GOA(n, s, t)$. 
 
The following theorem is the well known MacNeish's Theorem \cite{MN22}. 
	\begin{thm}\label{proj}
		If $n=p_1^{\alpha_1}\cdots p^{\alpha_r}_r$ where $p_1,\ldots,p_r$ are distinct primes. Then there exists an $OA(n,r)$ with
		 $r= \min_ip_i^{\alpha_i}+1$ 
	\end{thm}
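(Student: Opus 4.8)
The plan is to prove this in two steps: first dispose of the prime-power case using finite fields, then combine the prime-power factors via a direct product construction, which is the heart of MacNeish's argument.

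First I would establish the prime-power case. Let $q$ be a prime power and identify the symbol set $[q]$ with the finite field $\mathbb{F}_q$. For each nonzero $a \in \mathbb{F}_q$ define a Latin square $L_a$ of order $q$ by $L_a(x,y) = ax + y$ for $x,y \in \mathbb{F}_q$. Each $L_a$ is Latin, since for fixed $x$ the map $y \mapsto ax+y$ is a bijection and likewise for fixed $y$. For distinct nonzero $a,b$, the squares $L_a$ and $L_b$ are orthogonal: given any target pair $(u,v)$, the system $ax+y = u$, $bx+y = v$ has the unique solution $x = (a-b)^{-1}(u-v)$, $y = u - ax$. Hence $\{L_a \mid a \in \mathbb{F}_q \setminus \{0\}\}$ is a family of $q-1$ mutually orthogonal Latin squares of order $q$, which by the equivalence recorded in Subsection \ref{LS} is the same as an $OA(q, q+1)$.

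Next I would prove the product lemma: if there exist an $OA(m,s)$ and an $OA(n,s)$, then there exists an $OA(mn,s)$. In the language of MOLS, suppose $A_1,\ldots,A_{s-2}$ are MOLS of order $m$ and $B_1,\ldots,B_{s-2}$ are MOLS of order $n$. On the index set $[m]\times[n]$ define squares of order $mn$ by
\[
C_i\big((x_1,y_1),(x_2,y_2)\big) = \big(A_i(x_1,x_2),\, B_i(y_1,y_2)\big).
\]
Each $C_i$ is Latin because its two coordinates are independently Latin in each coordinate of the index. For the orthogonality of $C_i$ and $C_j$, a prescribed ordered pair of symbols in $[m]\times[n]$ determines, through its first coordinates, a unique value of $(x_1,x_2)$ by the orthogonality of $A_i,A_j$, and through its second coordinates a unique value of $(y_1,y_2)$ by the orthogonality of $B_i,B_j$; together these pin down a unique cell. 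This yields $s-2$ MOLS of order $mn$, i.e.\ an $OA(mn,s)$.

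Finally I would assemble the result. Set $t := \min_i p_i^{\alpha_i} + 1$. By the prime-power case each factor admits an $OA(p_i^{\alpha_i}, p_i^{\alpha_i}+1)$, and deleting rows (which preserves the pairwise defining property) gives an $OA(p_i^{\alpha_i}, t)$, since $p_i^{\alpha_i}+1 \ge t$ for every $i$. Applying the product lemma inductively across $p_1^{\alpha_1}, \ldots, p_r^{\alpha_r}$ produces an $OA\big(\prod_i p_i^{\alpha_i}, t\big) = OA(n,t)$, as required. The main obstacle is the orthogonality verification inside the product lemma; once the coordinate bookkeeping is arranged it reduces to a direct check, and the only remaining care is to ensure the common strength $t$ is simultaneously attainable across all prime-power factors, which is exactly the reason $t$ is forced to be $\min_i p_i^{\alpha_i}+1$.
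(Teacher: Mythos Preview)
The paper does not supply its own proof of this statement; it is quoted as the well-known MacNeish Theorem with a citation to \cite{MN22} and no argument is given. Your proposal is precisely the standard MacNeish proof (finite-field MOLS for prime powers, then the Kronecker-type product of MOLS to combine factors, truncating to the common strength $t=\min_i p_i^{\alpha_i}+1$), and it is correct as written.
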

	
 Now we will define the Latin Square graphs.
	
	\begin{de}
		 Let  $\mathcal{O}$ be an $OA(n, m)$ with columns $c_1, c_2, \ldots, c_{n^2}$. The Latin Square graph $LS_m(n)$ with respect to $\mathcal{O}$ is the graph on 
		 $V = \{ c_1, c_2, \ldots, c_{n^2}\}$ and for $i \neq j$, $c_i \sim c_j$ if there exists an integer $\ell$ such that $c_i(\ell) = c_j(\ell)$. 
	\end{de}
Each Latin Square graph $LS_m(n)$ is a strongly regular graph with parameters 
$$(n^2,(n-1)m,(m-1)(m-2)+n-2,m(m-1))$$
and has spectrum $Spec(LS_m(n))=\{[(n-1)m]^1,[n-m]^{(n-1)m},[-m]^{(n-1)(n+1-m)}\}$. For more background about Latin Square graphs, we refer to \cite{BrVM2022, god01}

For a positive integer $s$, the \emph{ $s$-clique extension} of a graph $G$ is the graph $\tilde{G}$ obtained from $G$ by replacing each
vertex $x\in V(G)$ by a clique $\tilde{X}$ with $s$ vertices, such that $\tilde{x}\sim\tilde{y}$ (for $\tilde{x}\in\tilde{X}$, $\tilde{y}\in\tilde{Y}$) in $\tilde{G}$ if and only if 
$x\sim y$ in $G$. Note that  $\tilde{G}$ has adjacent matrix $A(\tilde{G})=J_s\otimes(A(G)+I_v)-I_{sv}$, where $I$ is the identity matrix, $J$ is the all-ones matrix, and $v$ is the number of vertices in $G$. 
In particular, if  $ Spec(G)=\{[\theta_0]^{m_0},[\theta_1]^{m_1},\ldots,[\theta_r]^{m_r}\}$, then 
$$Spec(\tilde{G})=\{[s(\theta_0+1)-1]^{m_0},[s(\theta_1+1)-1]^{m_1},\ldots,[s(\theta_r+1)-1]^{m_r},[-1]^{(s-1)v}\}.$$
Note that if $G$ and $H$ have the same spectrum, then the $s$-clique extension of $G$ has the same spectrum, as the  the $s$-clique extension of $H$.
For more background about $s$-clique extensions, we refer to \cite{HAYAT19}.		
	
\subsection{Parallel classes}
Let $q$ be a prime power and $\mathbb{F}_q$ be the finite field of order $q$.
A \emph{plane} $P$ is an affine plane in $\mathbb{F}_q^3$.
Two planes $P_1$ and $P_2$ are {\em parallel} if they are disjoint.
A \emph{parallel class} $\mathcal{S}$ is a set of $q$ mutually parallel planes in $\mathbb{F}^3_q$.
Bose first considered these sets in \cite{bose1942} to construct certain designs. In the next result we show that there are special parallel classes in $\mathbb{F}_q^3$.
The following result may be known, but we could not find a reference for it. 
\begin{thm}\label{cut}
There are $q+1$ parallel classes $\mathcal{S}_1,\ldots,\mathcal{S}_{q+1}$ in $\mathbb{F}^3_q$ satisfying:
\begin{enumerate}
	\item $|P\cap Q|=q$ for $P\in \mathcal{S}_i$ and $Q\in\mathcal{S}_j$ if $i\neq j$,
	\item $|P\cap Q\cap R|=1$ for $P\in \mathcal{S}_h$, $Q\in \mathcal{S}_i$ and $R\in\mathcal{S}_j$, if $1 \leq h <i < j \leq q+1$.
\end{enumerate}
\end{thm}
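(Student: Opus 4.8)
The plan is to reduce the two combinatorial conditions to a single linear-algebraic condition on the ``normal directions'' of the parallel classes, and then to realise that condition by a classical object, a $(q+1)$-arc in the projective plane $PG(2,q)$. First I would describe all parallel classes explicitly. Every plane in $\mathbb{F}_q^3$ is the solution set of one non-degenerate affine linear equation, so it has the form $P_{a,c}:=\{x\in\mathbb{F}_q^3 : a^{\top}x=c\}$ for some nonzero $a\in\mathbb{F}_q^3$ and some $c\in\mathbb{F}_q$. For a fixed nonzero $a$ the $q$ planes $\{P_{a,c} : c\in\mathbb{F}_q\}$ are pairwise disjoint and partition $\mathbb{F}_q^3$; hence $\mathcal{S}(a):=\{P_{a,c} : c\in\mathbb{F}_q\}$ is a parallel class, and it depends only on $a$ up to a nonzero scalar. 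Thus choosing a parallel class is the same as choosing a point $[a]$ of $PG(2,q)$.

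Second, I would translate the two conditions into statements about the chosen normals. For $P=P_{a,c}$ and $Q=P_{b,d}$ with $a,b$ linearly independent, the intersection $P\cap Q$ is the solution set of two independent affine equations in three unknowns, hence an affine line, so $|P\cap Q|=q$. Therefore condition (i) holds for classes $\mathcal{S}(a_i),\mathcal{S}(a_j)$ as soon as $a_i,a_j$ are linearly independent, i.e.\ $[a_i]\ne[a_j]$ in $PG(2,q)$. Similarly, for $P=P_{a,c}$, $Q=P_{b,d}$, $R=P_{e,f}$ the triple intersection $P\cap Q\cap R$ is the solution set of the $3\times 3$ system whose coefficient matrix has rows $a^{\top},b^{\top},e^{\top}$; this is a single point exactly when $a,b,e$ are linearly independent. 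Hence condition (ii) holds whenever any three of the chosen normals are linearly independent, that is, no three of the projective points $[a_1],\ldots,[a_{q+1}]$ are collinear.

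The problem is thereby reduced to exhibiting $q+1$ points of $PG(2,q)$ no three of which are collinear, namely a $(q+1)$-arc. I would take the conic (moment curve): set $a_t:=(1,t,t^2)^{\top}$ for $t\in\mathbb{F}_q$ together with $a_\infty:=(0,0,1)^{\top}$, giving $q+1$ directions indexed by $\mathbb{F}_q\cup\{\infty\}$. That no three are linearly dependent is a Vandermonde computation: for distinct $t_1,t_2,t_3\in\mathbb{F}_q$ the determinant of the matrix with rows $a_{t_i}^{\top}$ equals $\prod_{1\le r<s\le 3}(t_s-t_r)\ne 0$, while the determinant of the matrix with rows $a_{t_1}^{\top},a_{t_2}^{\top},a_\infty^{\top}$ equals $t_2-t_1\ne 0$. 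Letting $\mathcal{S}_1,\ldots,\mathcal{S}_{q+1}$ be the parallel classes $\mathcal{S}(a)$ attached to these $q+1$ directions then yields both (i) and (ii).

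The only genuine content is the existence of the $(q+1)$-arc; everything else is bookkeeping in affine/projective coordinates. Since the conic furnishes such an arc for every prime power $q$, I do not expect a real obstacle, and the most delicate step is merely organising the dictionary between ``$k$ planes meet in an affine subspace of the expected dimension'' and ``the corresponding normal vectors are linearly independent.''
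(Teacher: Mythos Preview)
Your proof is correct and follows essentially the same approach as the paper: both define parallel classes via normal vectors, reduce conditions (i) and (ii) to the requirement that any three of the $q+1$ normals be linearly independent, and then exhibit such normals by a conic together with the point at infinity. The paper uses $v_x=(1,x,x(x+1))$ in place of your $(1,t,t^2)$, a cosmetic difference, and does not name the object a $(q+1)$-arc, but the argument is the same.
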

\begin{proof}
	Let $v_{\infty}:=(0,0,1)$ and $v_x:=(1,x,x(x+1))$ for each $x\in \mathbb{F}_q$. Let $x, y, z$ be three distinct elements of $\mathbb{F}_q$.
Then  	$$\det\begin{pmatrix}0 & 0 & 1 \\ 1 & x & x(x+1)\\ 1 & y & y(y+1)	\end{pmatrix}=y-x$$ and  $$\det\begin{pmatrix} 1 & x & x(x+1)\\ 1 & y & y(y+1)\\ 1 & z & z(z+1)	\end{pmatrix}=(x-y)(y-z)(z-x).$$ This means that any three distinct vectors of $\{ v_x \mid x\in \mathbb{F}_q\} \cup \{\infty\}$ are linear independent.
	Let $P_{a}^b:=\{u\in \mathbb{F}^3_q\mid u^Tv_a =b\}$ for $a\in\mathbb{F}_q\cup\{q+1\}$ and $b\in\mathbb{F}_q$.	
	Then  $\mathcal{S}_a:=\{P_a^b\mid b\in\mathbb{F}_q\}$ is a parallel class as these planes are all orthogonal to $v_a$.
	Let $a_1, a_2, a_3 \in \mathbb{F}_q \cup \{ \infty\}$ be pairwise distinct.
	Then we have 
	$|P_{a_1}^{b_1}\cap P_{a_2}^{b_2}|=q$ for $b_1,b_2\in\mathbb{F}_q$, as $v_{a_1}$ and $v_{a_2}$ are linear independent, and  
	$|P_{a_1}^{b_1}\cap P_{a_2}^{b_2}\cap P_{a_3}^{b_3}|=1$ for $b_1,b_2,b_3\in\mathbb{F}_q$, as $v_{a_1}$, $v_{a_2}$ and $v_{a_3}$ are linear independent.
	
\end{proof}

\section{A combinatorial characterization of co-edge-regular graphs with four distinct eigenvalues}

 In this section we will study connected co-edge-regular graphs with at most four distinct eigenvalues. Note that by Lemma \ref{srg}, we have a characterization of connected regular 
 graphs with 3 distinct eigenvalues. 
 We will generalize this to four distinct eigenvalues. 
 
 Let $G$ be a connected $k$-regular graph on $n$ vertices with exactly four eigenvalues $\theta_0=k > \theta_1> \theta_2 >\theta_3$. 
 	Let $A:=A(G)$ be the adjacent matrix of $G$, $n:=|V(G)|$, and $\ell:=\frac{(k-\theta_1)(k-\theta_2)(k-\theta_3)}{n}$.
 	Then $(A-\theta_1I)(A-\theta_2I)(A-\theta_3I)=\ell J$. It follows that
 	\begin{equation}\label{1}
 		A^3-(\theta_1+\theta_2+\theta_3)A^2+(\theta_1\theta_2+\theta_1\theta_3+\theta_2\theta_3)A-\theta_1\theta_2\theta_3 I=\ell J.
 	\end{equation}

  \begin{de}
  \begin{enumerate}
 \item A co-edge-regular graph with parameter $\mu$ is a {\em $(\mu,\gamma)$-strongly co-edge-regular}, if
 	 every pair of non-adjacent vertices $\{x,y\}$ satisfies  $\sum_{z\in N(x)\cap N(y)}\lambda(x,z)=\gamma$. 

 \item A regular graph is {\em $(\alpha,\beta)$-weakly edge-regular} if every pair of adjacent vertices $\{x,y\}$ satisfies
 	$\alpha \lambda(x,y)=\sum_{z\in N(x)\cap N(y)}\lambda(x,z)+\beta$.
\end{enumerate}
 \end{de} 
 
 Note that a non-complete strongly regular graph with parameters $(n, k, \lambda, \mu)$ is $(\mu, \gamma)$-strongly co-edge-regular with $\gamma = \lambda^2$,
 and $(\alpha, \beta)$-weakly edge-regular for any pair $(\alpha, \beta)$ satisfying $\beta = (\alpha - \lambda)\lambda$.
 
 \begin{thm}\label{4ev}
 		Let $G$ be a connected non-complete $k$-regular and co-edge-regular graph on $n$ vertices with parameter $\mu$.
 		Then $G$ has at most $4$ distinct eigenvalues  if and only if it is $(\alpha,\beta)$-weakly edge-regular and $(\mu,\gamma)$-strongly co-edge-regular for some reals 
		$\alpha, \beta, \gamma$. 
 		Furthermore, if $G$ has exactly $4$ distinct eigenvalues $k > \theta_1 > 
	\theta_2 > \theta_3$, then $\alpha-\mu=\theta_1+\theta_2+\theta_3$, $\mu(\alpha-1)+k-\beta-\gamma=\theta_1\theta_2+\theta_1\theta_3+\theta_2\theta_3$, and $\mu(k-\alpha)+\gamma=\frac{(k-\theta_1)(k-\theta_2)(k-\theta_3)}{n}$.
 \end{thm}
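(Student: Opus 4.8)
The plan is to extract everything entry by entry from the cubic identity \eqref{1} and its combinatorial meaning. Write $A:=A(G)$. Since $G$ is $k$-regular and co-edge-regular with parameter $\mu$, the matrix $A^2$ has entries $(A^2)_{xx}=k$, $(A^2)_{xy}=\lambda(x,y)$ when $x\sim y$, and $(A^2)_{xy}=\mu$ when $x\neq y$ and $x\not\sim y$. First I would compute $(A^3)_{xy}=\sum_{z\in N(x)}(A^2)_{zy}$ in three cases. For an edge $x\sim y$, splitting $N(x)$ into $\{y\}$, the $\lambda(x,y)$ common neighbours of $x$ and $y$, and the remaining $k-1-\lambda(x,y)$ vertices gives
\[
(A^3)_{xy}=k+\sum_{z\in N(x)\cap N(y)}\lambda(z,y)+\mu\bigl(k-1-\lambda(x,y)\bigr);
\]
for a non-edge $x\neq y$, $x\not\sim y$, the same splitting (now with $y\notin N(x)$) gives
\[
(A^3)_{xy}=\sum_{z\in N(x)\cap N(y)}\lambda(z,y)+\mu(k-\mu);
\]
and on the diagonal $(A^3)_{xx}=\sum_{z\in N(x)}\lambda(x,z)=:T(x)$. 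Thus in every off-diagonal case $(A^3)_{xy}$ is governed by the sum $\sum_{z\in N(x)\cap N(y)}\lambda(z,y)$, which is exactly the quantity appearing in the two definitions.

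For the necessity direction (at most four eigenvalues forces the two conditions) I would use \eqref{1} in the form $A^3=(\theta_1+\theta_2+\theta_3)A^2-(\theta_1\theta_2+\theta_1\theta_3+\theta_2\theta_3)A+\theta_1\theta_2\theta_3 I+\ell J$ with $\ell=\frac{(k-\theta_1)(k-\theta_2)(k-\theta_3)}{n}$. Comparing this with the non-edge computation (where $(A^2)_{xy}=\mu$, $A_{xy}=0$, $J_{xy}=1$) shows that $\sum_{z\in N(x)\cap N(y)}\lambda(z,y)$ equals a fixed constant $\gamma$ for every non-adjacent pair, so $G$ is $(\mu,\gamma)$-strongly co-edge-regular; comparing with the edge computation shows $\sum_{z\in N(x)\cap N(y)}\lambda(z,y)=(\theta_1+\theta_2+\theta_3+\mu)\lambda(x,y)+\text{const}$, so $G$ is $(\alpha,\beta)$-weakly edge-regular with $\alpha=\theta_1+\theta_2+\theta_3+\mu$ and a corresponding $\beta$ (using that both defining identities are symmetric in the unordered pair $\{x,y\}$). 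The case of exactly three distinct eigenvalues is strongly regular by Lemma~\ref{srg}, and such graphs satisfy both conditions. Finally, comparing the coefficients of $A^2$, of $J$, and of $A$ between \eqref{1} and the expressions above yields the three stated parameter identities $\alpha-\mu=\theta_1+\theta_2+\theta_3$, $\mu(k-\alpha)+\gamma=\frac{(k-\theta_1)(k-\theta_2)(k-\theta_3)}{n}$, and the relation recording the coefficient of $A$, namely the one involving $\mu(\alpha-1)+k-\beta-\gamma$ and $\theta_1\theta_2+\theta_1\theta_3+\theta_2\theta_3$.

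For sufficiency, suppose $G$ is $(\alpha,\beta)$-weakly edge-regular and $(\mu,\gamma)$-strongly co-edge-regular. I would set $c_2:=\alpha-\mu$, $c_1:=k-\mu+\alpha\mu-\beta-\gamma$ and $c_J:=\mu k+\gamma-\alpha\mu$, and consider $M:=A^3-c_2A^2-c_1A-c_JJ$. Substituting the two defining identities into the edge and non-edge expressions for $(A^3)_{xy}$ above, one checks directly that every off-diagonal entry of $M$ vanishes, so $M$ is a \emph{diagonal} matrix (its $(x,x)$-entry being $T(x)$ minus the constant $c_2k+c_J$). The step I expect to be the main obstacle is to promote this diagonal matrix to a scalar one, i.e.\ to see that $T(x)$ is constant. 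The clean resolution is to note that $M$ is a real polynomial in $A$ together with a multiple of $J$, and $AJ=JA=kJ$, so $M$ commutes with $A$; a diagonal matrix commuting with the adjacency matrix of a connected graph must be constant along edges, hence, by connectivity of $G$, a scalar multiple of $I$. (Equivalently, one may show $T(x)$ is constant by summing the weakly edge-regular identity over the neighbours of $x$ and the strongly co-edge-regular identity over the non-neighbours of $x$.) Once $M=cI$, the identity $A^3-c_2A^2-c_1A-cI=c_JJ$ is a cubic in $A$ equal to a multiple of $J$; restricting to the orthogonal complement of the all-ones vector, where $J$ acts as $0$, shows that the eigenvalues of $A$ other than $k$ are among the at most three roots of that cubic, so $G$ has at most four distinct eigenvalues.
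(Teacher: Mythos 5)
Your proposal is correct and follows essentially the same route as the paper: compute $(A^2)_{xy}$ and $(A^3)_{xy}$ in the diagonal, adjacent and non-adjacent cases, read off $\alpha,\beta,\gamma$ from the identity $(A-\theta_1I)(A-\theta_2I)(A-\theta_3I)=\ell J$ for necessity, and for sufficiency verify entrywise that $A^3$ lies in $\operatorname{span}\{I,A,A^2,J\}$ (your $c_1,c_2,c_J$ coincide with the paper's $f_1,f_2,f_4$). The one step where you diverge is the diagonal: what you flag as "the main obstacle" is dispatched in the paper without any commutation argument, by observing that $(A^3)_{xx}=2e(\Delta_x)=\sum_{z\in N(x)}\lambda(x,z)$ is \emph{already} forced to equal the constant $k(k-1)-\mu(n-k-1)$ by co-edge-regularity alone (double-count the edges between $N(x)$ and $V\setminus(N(x)\cup\{x\})$). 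Your alternative --- $M$ is diagonal and commutes with $A$, hence constant on edges, hence scalar by connectivity --- is valid, but the direct count is shorter and shows the diagonal constancy needs no input from the two regularity conditions. Your final step (restrict the cubic identity to $\mathbf{1}^{\perp}$) and the paper's (multiply by $A-kI$ to get a quartic annihilating polynomial) are interchangeable. One substantive remark: carrying out the coefficient comparison you describe actually gives $\mu(\alpha-1)+k-\beta-\gamma=-(\theta_1\theta_2+\theta_1\theta_3+\theta_2\theta_3)$, i.e.\ the coefficient of $A$ when equation~(\ref{1}) is solved for $A^3$; the sign in the theorem's "furthermore" clause as printed appears to be off, so do not simply copy it --- your method surfaces the correct relation.
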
	 	
 \begin{proof}
 	Let $G$ be a $k$-regular and co-edge-regular graph with parameter $\mu$. 
	
	As $G$ is non-complete, $G$ has at least three distinct eigenvalues. If $G$ has exactly three distinct eigenvalue, then we already have seen that $G$ is strongly and hence 
	$(\alpha,\beta)$-weakly edge-regular and $(\mu,\gamma)$-strongly co-edge-regular for some reals $\alpha, \beta, \gamma$. Now assume that $G$ has at least four distinct eigenvalues. 

 	Notice that $(A^3)_{xx}=2e(\Delta_x)=k(k-1)-\mu(n-k-1)$, where $e(\Delta_x)$ is the number of edges in $\Delta_x$ the local graph at $x$.
	For a vertex $x$ of $G$, let $N(x)$ be the set of neighbours of $x$ in $G$.
 	
 	Let $x$ and $y$ be two distinct non-adjacent vertices. Then $$(A^3)_{xy}=(k-\mu)\mu+\sum_{z\in N(x)\cap N(y)}\lambda(x,z) \mbox{ and } (A^2)_{xy}=\mu.$$ 
	Similarly, if $u$ and $v$ are two adjacent vertices, then 
 	$$(A^3)_{uv}=\sum_{w\in N(u)\cap N(v)}\lambda(u,w)+(k-1-\lambda(u,v))\mu+k \mbox  { and } (A^2)_{uv}=\lambda(u,v).$$

	Assume that $G$ has exactly four distinct eigenvalues $\theta_0=k >\theta_1> \theta_2 > \theta_3$. 
	 By Equation (\ref{1}), for two distinct non-adjacent vertices  $x$ and $y$, the sum 
 	$$\sum_{z\in N(x)\cap N(y)}\lambda(x,z)=\mu(\theta_1+\theta_2+\theta_3-k+\mu)+\ell$$ does not depend on the choice of $x$ and $y$, 
	where $\ell:=\frac{(k-\theta_1)(k-\theta_2)(k-\theta_3)}{n}.$ Hence, $G$ is strongly co-edge-regular with $\gamma=\mu(\theta_1+\theta_2+\theta_3-k+\mu)+\ell$.
 
 	Also by Equation (\ref{1}), for two adjacent vertices $u$ and $v$, the sum
	$$(\theta_1+\theta_2+\theta_3+\mu)\lambda(u,v)=\sum_{w\in N(u)\cap N(v)}\lambda(u,w)+(k-1)\mu+k+\theta_1\theta_2+\theta_1\theta_3+\theta_2\theta_3 -\ell$$ 
	does not depend on the choice of $u$ and $v$. Hence, $G$ is weakly edge-regular with 
	$\alpha=\theta_1+\theta_2+\theta_3+\mu$ and $\beta=(k-1)\mu+k+\theta_1\theta_2+\theta_1\theta_3+\theta_2\theta_3-\ell$.
 	
 	Now let $G$ be a $(\mu,\gamma)$-strongly co-edge-regular and $(\alpha,\beta)$-weakly edge-regular graph. 
 	Let $f_1:=\mu(\alpha-1)+k-\beta-\gamma$, $f_2:=\alpha-\mu$, $f_3:=k(k-1-\alpha)+\mu\alpha-\gamma-\mu(n-k-1)$, and $f_4:=\mu(k-\alpha)+\gamma$.
 	
	Let $x$ be a vertex of $G$. Then $(A^3)_{xx} = k(k-1)-\mu(n-k-1) = f_3 + kf_2 + f_4= (f_1A+ f_2A^2++f_3I+f_4J)_{xx}$.
	
 	Let $x$ and $y$ be two distinct non-adjacent vertices. Then $$(A^3)_{xy}=(k-\mu)\mu+\sum_{z\in N(x)\cap N(y)}\lambda(x,z) \mbox{ and } (A^2)_{xy}=\mu.$$ 
	As $G$ is a $(\mu,\gamma)$-strongly co-edge-regular graph, we see that 
	$(A^3)_{xy} = (k-\mu)\mu+\gamma= f_4 +f_2\mu= f_4J_{xy} + f_2(A^2)_{xy} = (f_1A+ f_2A^2+f_3I+f_4J)_{xy}$.
	
	Using the fact that $G$ is  $(\alpha,\beta)$-weakly edge-regular, it can be checked in a similar way that 
	$(A^3)_{uv} = (f_1A+ f_2A^2+f_3I+f_4J)_{uv}$ for adjacent vertices $u, v$ of $G$. 
	This means that $A^3=f_1A^2+f_2A+f_3I+f_4J$.
 	This implies that $A^4-(k+f_1)A^3+(kf_1-f_2)A^2+(kf_2-f_3)A+kf_3I=0$, since $AJ=kJ$.
 	Therefore, the degree of the minimal polynomial of $A$ is at most four, and thus  $G$ has at most $4$ distinct eigenvalues. 	
 \end{proof}
 
 Of course the above theorem also gives a characterization of edge-regular graphs with four distinct eigenvalues, by looking at their complements. Note that this characterization generalizes the following result of Van Dam \cite{van99}, as relation graphs from $d$-class association schemes
 have at most  $d + 1$ distinct eigenvalues. 
 
 \begin{thm}[{cf. \cite[Theorem 5.1]{van99}}]
 Let $G$ be an edge-regular graph of level $2$. Then $G$ is the relation graph of a 3-class association scheme if and only if it has exactly four distinct four eigenvalues. 
 \end{thm}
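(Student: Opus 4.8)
The plan is to prove both implications, the substantive one being that an edge-regular graph of level $2$ with exactly four eigenvalues carries a $3$-class association scheme, the reverse being a short spectral count. Throughout I take $G$ to be connected (as in Theorem \ref{level2}) and non-complete (forced by level $2$), $k$-regular, edge-regular with parameter $\lambda$, and I write $\mu_1 \neq \mu_2$ for the two values taken by $\mu(x,y)$ over non-adjacent pairs. First I would record the only spectral input needed: a connected $k$-regular graph with exactly four distinct eigenvalues $k > \theta_1 > \theta_2 > \theta_3$ satisfies $(A-\theta_1 I)(A-\theta_2 I)(A-\theta_3 I) = \ell J$ with $\ell \neq 0$, so its adjacency algebra $\mathcal{A} = \langle I, A, A^2, A^3 \rangle$ is four-dimensional and contains $J$. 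This is exactly the structural identity underlying Theorem \ref{4ev}, now read for the edge-regular graph itself rather than its complement.

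For the ``if'' direction I would introduce the candidate relation matrices. Put $A_0 = I$, $A_1 = A$, and let $A_2, A_3$ be the $(0,1)$-matrices supported on the non-adjacent pairs with $\mu(x,y) = \mu_1$ and with $\mu(x,y) = \mu_2$, respectively. Both are nonzero because $G$ has level exactly $2$, both are symmetric because $\mu(x,y) = \mu(y,x)$, and by construction $A_0 + A_1 + A_2 + A_3 = J$. Counting common neighbours gives the second linear relation $A^2 = k I + \lambda A + \mu_1 A_2 + \mu_2 A_3$. Since $\mu_1 \neq \mu_2$, I can solve the two relations for $A_2$ and $A_3$:
\begin{equation*}
(\mu_2 - \mu_1) A_2 = \mu_2(J - I - A) - (A^2 - kI - \lambda A), \qquad (\mu_2 - \mu_1) A_3 = (A^2 - kI - \lambda A) - \mu_1(J - I - A),
\end{equation*}
which exhibits $A_2$ and $A_3$ as elements of $\mathcal{A}$, using $J \in \mathcal{A}$. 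As $A_0, A_1, A_2, A_3$ have pairwise disjoint supports they are linearly independent, and being four members of the four-dimensional algebra $\mathcal{A}$ they form a basis of it.

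It then remains to check the association-scheme axioms, and this is where the one genuine point lies. Axioms (i)--(iii) hold by construction. For the intersection-number axiom (iv), note that $\mathcal{A}$ is closed under multiplication, so each product $A_i A_j$ again lies in $\mathcal{A}$ and therefore expands as $A_i A_j = \sum_h c_{ij}^h A_h$ for some reals $c_{ij}^h$. Because the $A_h$ have disjoint supports, the coefficient $c_{ij}^h$ is forced to equal the common value of $(A_i A_j)_{xy}$ over all $(x,y) \in R_h$; in particular that entry is constant on each relation, and since $(A_i A_j)_{xy}$ counts walks it is a nonnegative integer. This is precisely axiom (iv), so $(X, \{R_i\}_{i=0}^3)$ is a $3$-class symmetric association scheme of which $G = G_1$ is a relation graph. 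I expect this disjoint-support-to-intersection-numbers step — converting mere closure of the algebra into genuine, pair-independent intersection numbers — to be the main obstacle, in the sense that everything hinges on $A_2$ and $A_3$ actually lying in $\mathcal{A}$, which is exactly what the level-$2$ hypothesis (only two $\mu$-values) buys us.

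For the converse, if $G$ is a relation graph of a $3$-class association scheme then it has at most $D+1 = 4$ distinct eigenvalues. Being edge-regular of level $2$, $G$ is non-complete and is not strongly regular, since two distinct $\mu$-values preclude a single $\mu$; hence by Lemma \ref{srg} it cannot have exactly three distinct eigenvalues, and a connected non-complete regular graph has neither two nor one. Therefore $G$ has exactly four distinct eigenvalues, which completes the equivalence.
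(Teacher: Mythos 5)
Your proof is correct, but be aware that the paper itself offers no proof of this statement: it is quoted verbatim from Van Dam \cite{van99} as a known result, placed right after the remark that Theorem \ref{4ev} generalizes it (relation graphs of $3$-class schemes having at most four distinct eigenvalues). So the only in-paper comparison is with the proof of Theorem \ref{4ev}, and there your route differs in a substantive way. The paper argues entrywise: it evaluates $(A^3)_{xy}$ separately on the diagonal, on edges, and on non-edges, and matches the outcome against the Hoffman identity $(A-\theta_1I)(A-\theta_2I)(A-\theta_3I)=\ell J$ to extract the weakly-edge-regular and strongly-co-edge-regular conditions; that gives a characterization valid without any level hypothesis, but it does not by itself produce an association scheme. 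You instead feed the level-$2$ hypothesis in at the start to manufacture an explicit $\{0,1\}$-basis $I,A,A_2,A_3$ of the four-dimensional adjacency algebra, and the disjoint-support argument then converts closure of that algebra into genuine intersection numbers --- this is the step that upgrades ``four eigenvalues'' to a Bose--Mesner structure, and it is essentially Van Dam's original argument. Two points worth making explicit in your write-up: you should say that $A_2,A_3\neq 0$ (which is precisely what ``level exactly $2$'' provides) so that the four matrices are linearly independent and the scheme really has three classes; and connectedness, which you import from Theorem \ref{level2}, is genuinely needed in both directions (to get $J\in\mathcal{A}$ in one, and to exclude fewer than three eigenvalues in the other), so it is worth flagging that the bare statement, as reproduced here, omits it.
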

 
 
 \section{On a theme of Haemers and Tonchev}
  In strongly regular graphs, the cliques and co-cliques with the property that every vertex outside is adjacent with the same number of vertices inside are characterized by the fact that 
  they satisfy with the equality so-called \emph{Hoffman bound}:
 \begin{thm}[{cf. \cite[Proposition 1.1.7]{BrVM2022}}]\label{hoff}
 	Let $G$ be a strongly regular graph with parameters $(n,k,\lambda,\mu)$ and smallest eigenvalue $-m$.
 	\begin{enumerate} 
 		\item If $C$ is a clique of $G$, then $|C|\leq\frac{m+k}{m}$, with equality if and only if every vertex $x\notin C$ has the same number $\frac{\mu}{m}$ of neighbors in $C$.
 		\item 	If $D$ is a co-clique of $G$, then $|D|\leq \frac{m}{m+k}n$, with equality if and only if every vertex $x\notin D$ has the same number $m$ of neighbors in $D$.

 		\item If a clique $C$ and a co-clique $D$  both meet the bounds of {\rm(i)} and {\rm(ii)}, then $|C\cap D|=1$.
 	\end{enumerate}
 \end{thm}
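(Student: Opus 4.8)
The plan is to treat the three parts in the order (ii), (i), (iii), since the clique bound is cleanest to obtain from the co-clique bound applied to the complement. Throughout I write the three eigenvalues of $G$ as $k > r > -m$, let $\mathbf{1}$ denote the all-ones vector, and I shall use the standard parameter identities $rm = k-\mu$, $\ r-m = \lambda-\mu$ and $(k-r)(k+m) = n\mu$ (the last being equivalent to $k(k-\lambda-1)=(n-k-1)\mu$), which convert spectral inequalities into the stated closed forms.

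For the co-clique bound (ii): let $D$ be a co-clique with characteristic vector $\chi$ and $|D| = d$, and decompose $\chi = \frac{d}{n}\mathbf{1} + w$ with $w \perp \mathbf{1}$. Since $D$ is independent, $\chi^\top A\chi = 0$; expanding and using $A\mathbf{1} = k\mathbf{1}$ gives $0 = \frac{d^2 k}{n} + w^\top A w$. As $w$ lies in $\mathbf{1}^\perp$, where every eigenvalue of $A$ is at least $-m$, the Rayleigh quotient gives $w^\top A w \ge -m\|w\|^2 = -m\left(d - \frac{d^2}{n}\right)$. Combining the two relations and dividing by $d$ yields $d \le \frac{m}{m+k}n$. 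Equality forces $w$ into the $(-m)$-eigenspace, so that $A\chi = \frac{dk}{n}\mathbf{1} - mw$ takes the constant value $\frac{d(k+m)}{n}$ on the coordinates outside $D$, and at equality this value equals $m$; this is precisely the assertion that every $x \notin D$ has $m$ neighbours in $D$.

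For the clique bound (i): the key point, and the main obstacle, is that no direct Rayleigh-quotient or two-part interlacing argument on $G$ itself produces the clean value $\frac{m+k}{m}$, since such arguments only return bounds that still involve $n$ (one checks this already fails to rule out a triangle in the Petersen graph). I therefore pass to the complement $\overline{G}$, which is again strongly regular, of valency $n-1-k$ and with non-principal eigenvalues $m-1$ and $-1-r$, so its smallest eigenvalue is $-(1+r)$. A clique $C$ of $G$ is a co-clique of $\overline{G}$, so part (ii) applied to $\overline{G}$ gives $|C| \le \frac{(1+r)n}{\,n+r-k\,}$, and the identity $(k-r)(k+m) = n\mu$ collapses the right-hand side to exactly $\frac{m+k}{m}$. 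At equality every $x \notin C$ has $1+r$ neighbours in $C$ with respect to $\overline{G}$, i.e.\ $1+r$ non-neighbours in $C$ with respect to $G$, and hence $|C|-1-r = \frac{k}{m} - r = \frac{\mu}{m}$ neighbours in $C$ in $G$ (using $rm = k-\mu$), which is the stated condition. The delicate bookkeeping here is keeping the translation between $G$ and $\overline{G}$ consistent and applying the parameter identities in the right places.

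Finally, for (iii): assume $C$ (a clique) and $D$ (a co-clique) meet their respective bounds, and set $t := |C \cap D|$. I count the edges between $C$ and $D$, namely $\sum_{x\in C}|N(x)\cap D| = \sum_{y\in D}|N(y)\cap C|$, in two ways. On the left, a vertex of $C\setminus D$ contributes $m$ (equality in (ii)) and a vertex of $C\cap D$ contributes $0$ (as $D$ is a co-clique), giving $(|C|-t)m$. On the right, a vertex of $D\setminus C$ contributes $\frac{\mu}{m}$ (equality in (i)) and a vertex of $C\cap D$ contributes $|C|-1$ (it is adjacent to all other clique vertices), giving $(|D|-t)\frac{\mu}{m} + t(|C|-1)$. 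Substituting $|C| = \frac{m+k}{m}$, $|D| = \frac{mn}{m+k}$ and simplifying with $rm = k-\mu$ and $(k-r)(k+m) = n\mu$, the equation reduces to $t(m+r) = m+r$; since $r > -m$ we have $m+r \neq 0$, so $t = 1$. (The inequality $t \le 1$ is in any case automatic, as two common vertices would be simultaneously adjacent and non-adjacent.) This completes the plan.
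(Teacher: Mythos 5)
The paper itself gives no proof of this statement---it is quoted directly from Brouwer--Van Maldeghem---so there is nothing in-paper to compare your route against. Judged on its own merits, your proposal is the standard ratio-bound (Delsarte--Hoffman) argument and the computations check out: the decomposition $\chi=\frac{d}{n}\mathbf{1}+w$ with the Rayleigh estimate on $\mathbf{1}^{\perp}$ gives (ii); you are right that the same estimate applied directly to $G$ yields an inequality pointing the wrong way for cliques, so passing to $\overline{G}$ (whose relevant eigenvalue on $\mathbf{1}^{\perp}$ is $-(1+r)$) is the correct move; the identities $rm=k-\mu$ and $(k-r)(k+m)=n\mu$ do collapse $\frac{(1+r)n}{n+r-k}$ to $\frac{k+m}{m}$ and $|C|-1-r$ to $\frac{\mu}{m}$; and the double count in (iii) correctly reduces to $t(m+r)=m+r$ with $m+r>0$.

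The one genuine omission is that you prove only the ``only if'' halves of the two equivalences, i.e.\ equality $\Rightarrow$ regularity; the converses are part of the statement. For (ii) the converse is a one-line edge count: $dk=(n-d)m$ forces $d=\frac{mn}{m+k}$. For (i), however, it is not automatic: if every $x\notin C$ has exactly $\frac{\mu}{m}$ neighbours in $C$, the analogous edge count $|C|(k-|C|+1)=(n-|C|)\frac{\mu}{m}$ is a quadratic in $|C|$ whose roots are $\frac{k+m}{m}$ \emph{and} $k-r$, so the extraneous root must be excluded. A clean way is to note that the regularity hypothesis makes $w=\chi-\frac{|C|}{n}\mathbf{1}$ an eigenvector of $A$ with eigenvalue $|C|-1-\frac{\mu}{m}\in\{r,-m\}$; the value $r$ gives $|C|=\frac{k+m}{m}$, while the value $-m$ gives $|C|=1-m+\frac{\mu}{m}$, which is incompatible with the edge-count quadratic unless $m=1$ (a degenerate union of cliques). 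This is a small but real gap if the full ``if and only if'' is to be established; everything else in your write-up is sound.
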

 In 2021, Haemers wrote a note \cite{HAEMERS2021215} clarifying the history of the Hoffman bound. 
 We call a (co-)clique that meets the Hoffman bound a \emph{Hoffman (co-)clique}. Note that in the literature a Hoffman clique is also called a {\em Delsarte clique}.
 
  \begin{de}
  \begin{enumerate}
 	\item A Hoffman coloring in a strongly regular graph $G$ is a partition of the vertex set into Hoffman co-cliques;
	\item A Hoffman spread in a strongly regular graph $G$ is a partition of the vertex set into Hoffman cliques. 
	\end{enumerate}
 \end{de}
 Note that a Hoffman coloring in a strongly regular graph $G$ is the same as a Hoffman spread in its complement $\overline{G}$.
 
 In \cite{HT96}, Haemers and Tonchev  showed the following result, generalizing a result of Brouwer. 
 \begin{thm}[{cf. \cite[Proposition 4.1]{HT96}}]\label{ht}
 Let $G$ be a connected non-complete strongly regular graph with $n$ vertices and spectrum $Spec(G)=\{[k]^1,[r]^f,[s]^g\}$ having a Hoffman spread $S= \{C_1, C_2, \ldots, C_t\}$.
 Let $E$ be the edge set of $G$ and $F_i$ be the edge set of $C_i$ for $i =1, 2, \ldots, t$. 
 Let $G'$ be the graph with the same vertex set as $G$ and edge set $E \setminus \bigcup_{i=1}^t F_i$. 
 Then $G'$ is the relation graph of a 3-class association scheme. In particular, $G'$ is regular and edge-regular with spectrum
 $$Spec(G')=\{[k+\frac{n-k-1}{r+1}]^1,[r-1]^{m_1},[s-1]^{m_2},[s+\frac{n-k-1}{r+1}]^{m_3}\},$$
 where $m_1=f-\frac{sn}{s-k}+1$, $m_2=g$, and $m_3=\frac{sn}{s-k}-1$. 
 \end{thm}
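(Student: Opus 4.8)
The plan is to exhibit the three-class scheme directly and then obtain the spectrum of $G'$ from the common eigenspaces of the algebra it generates. Write $A:=A(G)$ and set $m:=-s$, so that each $C_i$ is a Hoffman clique of size $c:=\frac{m+k}{m}$, the number of cliques is $t:=n/c$, and, by Theorem~\ref{hoff}(i), every vertex outside a fixed $C_i$ has exactly $e:=\mu/m$ neighbours in it. Define four relations: $R_0=\{(x,x)\}$; $R_2=\{(x,y):x\ne y\text{ lie in a common }C_i\}$ (the removed clique edges); $R_1=E(G')$ (edges of $G$ joining distinct cliques); and $R_3$ (non-edges of $G$). These partition $V(G)\times V(G)$, with $0/1$-matrices $A_0=I$, $A_2$ (a disjoint union of $K_c$'s), $A_1=A-A_2$, and $A_3=J-I-A$, so that $\mathrm{span}\{A_0,A_1,A_2,A_3\}=\mathrm{span}\{I,A,A_2,J\}$. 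To prove the scheme claim it suffices to show that this four-dimensional space of symmetric matrices is closed under multiplication: then the $A_h$ are a basis, each product $A_iA_j$ lies in the span, and since the $A_h$ have disjoint supports the coefficients are the nonnegative integer walk-counts $p_{ij}^h$, which are exactly the axioms of a symmetric $3$-class association scheme.

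Most products are immediate: $A^2=kI+\lambda A+\mu(J-I-A)$ by strong regularity, while $A_2J=(c-1)J$ and $A_2^2=(c-1)I+(c-2)A_2$ since $A_2$ is a disjoint union of cliques; the remaining products with $I$ and $J$ are trivial. The one structural computation — the heart of the proof — is $AA_2$, where $(AA_2)_{xy}$ counts the neighbours of $x$ inside $C(y)\setminus\{y\}$, with $C(y)$ the clique of $y$. A short case analysis, using the Hoffman-clique property of Theorem~\ref{hoff}(i), yields the constant value $c-1$ if $x=y$, $c-2$ if $x,y$ share a clique, $e-1$ if $x\sim y$ in distinct cliques, and $e$ if $x\not\sim y$; hence
\[
AA_2=(c-1)A_0+(e-1)A_1+(c-2)A_2+eA_3,
\]
which is symmetric, so $A$ and $A_2$ commute and the algebra is closed. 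Thus $G'=G_1$ is a relation graph of a symmetric $3$-class scheme; in particular $G'$ is regular, and $\lambda'(x,y)=p_{11}^1$ on every edge of $G'$, so $G'$ is edge-regular.

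For the spectrum I would diagonalise simultaneously. The matrix $A_2$ has eigenvalue $c-1$ on $W_+:=\mathrm{span}\{\mathbf 1_{C_1},\dots,\mathbf 1_{C_t}\}$ (dimension $t$) and $-1$ on $W_-:=W_+^{\perp}$ (dimension $n-t$); since $\mathbf 1\in W_+$ we have $V_k=\langle\mathbf 1\rangle\subseteq W_+$. The key lemma is $V_s\subseteq W_-$. Indeed, for a Hoffman clique $C$ one computes $A\mathbf 1_C=e\mathbf 1+(c-1-e)\mathbf 1_C$, and the parameter identity $c-1-e=r$ (a consequence of $\mu=k+rs$) gives $A\mathbf 1_C=e\mathbf 1+r\mathbf 1_C$; writing $\mathbf 1_C=\tfrac cn\mathbf 1+w$ with $w\perp\mathbf 1$ forces $Aw=rw$, so $\mathbf 1_C\in V_k\oplus V_r$ and therefore $V_s\perp W_+$. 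Consequently $V_s\cap W_+=0$, whence $\dim(V_r\cap W_+)=t-1$, $\dim(V_r\cap W_-)=f-t+1$, and $\dim(V_s\cap W_-)=g$. Reading off the eigenvalues of $A(G')=A-A_2$ on these common eigenspaces — $k-(c-1)$ on $V_k$, $r-(c-1)$ on $V_r\cap W_+$, $r+1$ on $V_r\cap W_-$, and $s+1$ on $V_s$ — and substituting $c-1=k/m$ and $t=\tfrac{ns}{s-k}$ produces the four eigenvalues with the multiplicities $1,\ t-1,\ f-t+1,\ g$ recorded in the statement.

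The main obstacle is the Hoffman-clique input, which is used twice in an essential way: once to evaluate $AA_2$ (without the exact external count $e=\mu/m$ the product would not be constant on the relations, and closure would fail), and once to establish $V_s\subseteq W_-$ through the identity $c-1-e=r$. Everything else is bookkeeping; in particular the multiplicity count rests entirely on the clean splitting of $W_\pm$ forced by $V_k\subseteq W_+$ and $V_s\subseteq W_+^{\perp}$.
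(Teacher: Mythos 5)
Your construction of the scheme and your simultaneous diagonalisation are essentially correct: the evaluation of $AA_2$ via the Hoffman-clique property, the resulting closure of $\mathrm{span}\{I,A,A_2,J\}$, and the identity $A\mathbf{1}_C=e\mathbf{1}+r\mathbf{1}_C$ (using $\mu-k=rs$) are exactly the right ingredients, and they do yield
\[
Spec(G')=\bigl\{[\,k+k/s\,]^1,\ [\,r+k/s\,]^{t-1},\ [\,r+1\,]^{f-t+1},\ [\,s+1\,]^{g}\bigr\},\qquad t=\tfrac{ns}{s-k}.
\]
(The paper gives no proof of this theorem; it is imported from \cite{HT96}, so there is no internal argument to compare your route against.)

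The gap is in your final sentence: the spectrum you derived is \emph{not} the one recorded in the statement, and no substitution turns one into the other. The stated valency $k+\frac{n-k-1}{r+1}$ exceeds $k$ (as $r\ge 0$), whereas deleting the spread edges gives valency $k-(c-1)<k$; the stated subdominant eigenvalues are $r-1$ and $s-1$ where you correctly obtain $r+1$ and $s+1$; and $\frac{n-k-1}{r+1}$ equals $t-1$ (one less than the Hoffman co-clique size), not $-k/s=c-1$. Concretely, for $G=LS_2(q)$ (the $q\times q$ rook's graph) with the spread of rows, $G'$ is a disjoint union of $q$ copies of $K_q$, with spectrum $\{[q-1]^{q},[-1]^{q(q-1)}\}$, which matches your formula; the displayed formula instead gives largest eigenvalue $3q-3$ and total trace $-q(q-1)\neq 0$, so it cannot be the spectrum of any graph. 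In other words, the statement as transcribed here is itself garbled --- its eigenvalue/multiplicity pairing mixes the ``delete a Hoffman spread'' and ``add cliques on a Hoffman coloring'' situations (compare the correct formula used for $H$ in the theorem that follows). You should have flagged this discrepancy rather than asserting that substituting $c-1=k/m$ and $t=\frac{ns}{s-k}$ ``produces'' the stated eigenvalues; as written, that final identification is false, even though your mathematics up to that point is sound.
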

 Note that the complement $\overline{G_1}$ of $G_1$ of Theorem \ref{ht}, is co-edge-regular and has at most four distinct eigenvalues.
 Therefore, each strongly regular graph with a Hoffman spread can construct a counterexample of Conjecture \ref{tan}.
 There are many examples of strongly regular graphs with Hoffman spreads, see \cite{HT96,TP94a}.
 
By using the resolvable $2$-$(v,t,1)$-design, we give an infinite family of counterexamples of Conjecture \ref{tan} by the above theorem.
 
 Let $\mathcal{D} = (\mathcal{P}, \mathcal{B})$ be a resolvable $2$-$(v, t, 1)$-design, say, with resolution $\rho= \{ \mathcal{B_1}, \mathcal{B_2}, \ldots, \mathcal{B}_{\ell}\}$. 
 This means that the block graph $G$ of $\mathcal{D}$ is a strongly regular graph with parameters $(n,k,\lambda,\mu)$ having smallest eigenvalue $-t$, see \cite[Section 8.5.4A]{BrVM2022},  where $n=\frac{v(v-1)}{t(t-1)}$, $k=\frac{t(v-t)}{t-1}$, $\lambda=t(t-2)+\frac{v-t}{t-1}$, and $\mu=t^2$.
 Note that each $\mathcal{B}_i$ is a co-clique of $G$ with order $\frac{v}{t}$.
 By Theorem \ref{hoff}, each $\mathcal{B}_i$ denotes a Hoffman co-clique of $G$.
 Hence, the resolution $\rho$ gives a Hoffman coloring of $G$.
   
 Now construct the $H$ with vertex set $\mathcal{B}$ such that, for two distinct blocks $B_1, B_2$, we have $B_1 \sim B_2$ if they were adjacent in $G$ or 
 there exists $i$ such that $B_1, B_2 \in \mathcal{B}_i$. Then, by Theorem \ref{ht}, we see that $H$ is a co-edge-regular graph with smallest eigenvalue $-t-1$ having exactly four 
 distinct eigenvalues. 
 
 \begin{thm}
 	The graph $H$ is 
 	regular with valency $\frac{v-t}{t-1}(t+\frac{t-1}{t})$
   and co-edge-regular of level $2$ with parameter $t(t+2)$. Moreover, the spectrum of H is
 	$$Spec(H)=\{[\frac{v-t}{t-1}(t+\frac{t-1}{t})]^1,[\frac{v-t}{t}-t]^{f_1},[\frac{v-1}{t-1}-t-2]^{f_2},[-t-1]^{f_3}\},$$ where  $f_1=\frac{v-t}{t-1}$, $f_2=v-1$, and $f_3=\frac{v-1}{t}(\frac{v-t}{t-1}-t)$.
 \end{thm}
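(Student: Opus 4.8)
The plan is to exploit the $3$-class association scheme underlying the construction and to read every parameter off the strongly regular block graph $G$ together with the Hoffman coloring $\rho$. Since the blocks of a $2$-$(v,t,1)$-design meet in $0$ or $1$ points, two distinct blocks are adjacent in $G$ exactly when they intersect, and non-adjacent exactly when they are disjoint. I would partition the ordered pairs of distinct blocks into the relation $R_1$ (two blocks in a common class $\mathcal{B}_i$), the relation $R_2=E(G)$ (intersecting blocks), and $R_3$ (disjoint blocks in different classes). Because $H$ turns each $\mathcal{B}_i$ into a clique, $H=R_1\cup R_2$ and its non-edges are exactly the pairs of $R_3$. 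The valency is then immediate: a block $B\in\mathcal{B}_i$ keeps its $k$ neighbours in $G$ and gains the $|\mathcal{B}_i|-1=\frac{v}{t}-1$ other blocks of its class, so $H$ is regular of valency $k+\frac{v}{t}-1=\frac{v-t}{t-1}\bigl(t+\frac{t-1}{t}\bigr)$.

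The values of $\mu$ and of the level both rest on one clean fact: by part (ii) of Theorem~\ref{hoff} each class $\mathcal{B}_i$ is a Hoffman co-clique of $G$, so every block $B\notin\mathcal{B}_i$ meets exactly $m=t$ of the blocks of $\mathcal{B}_i$. To compute $\mu(H)$ I take two $H$-non-adjacent blocks $B_1\in\mathcal{B}_i$, $B_2\in\mathcal{B}_j$ (disjoint, $i\neq j$) and classify their common $H$-neighbours $C$: those meeting both $B_1$ and $B_2$ contribute the $\mu(G)=t^2$ common $G$-neighbours, those in $\mathcal{B}_i$ meeting $B_2$ contribute $t$ and those in $\mathcal{B}_j$ meeting $B_1$ contribute $t$ by the Hoffman property, while no block lies in both classes. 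Hence $\mu(H)=t^2+2t=t(t+2)$, reconfirming that $H$ is co-edge-regular. For the level I split the edges of $H$ into the two types $R_1$ and $R_2$. The same bookkeeping gives, for an $R_2$-edge (intersecting blocks), $\lambda_2=2(t-1)+\lambda(G)=t^2-2+\frac{v-t}{t-1}$, and for an $R_1$-edge (disjoint blocks of a common class), $\lambda_1=\bigl(\frac{v}{t}-2\bigr)+t^2$; their difference is $\frac{t^2-v}{t(t-1)}$, which is non-zero precisely when $v\neq t^2$. Thus $H$ takes exactly two values of $\lambda$ and is of level $2$ (the excluded case $v=t^2$ is the affine plane, where $H$ collapses to a strongly regular graph).

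For the spectrum I would use that $\rho$ is an equitable partition of $G$: by the Hoffman property the quotient matrix of $G$ along $\rho$ is $t(J_\ell-I_\ell)$ with $\ell=\frac{v-1}{t-1}$, whose eigenvalues are $k=t(\ell-1)$ on the all-ones vector and $-t$ with multiplicity $\ell-1=\frac{v-t}{t-1}$. Since $A(R_1)$ has eigenspaces the class-constant functions (eigenvalue $\frac{v}{t}-1$) and their complement (eigenvalue $-1$), and $A(G)$ preserves both, the two matrices share a basis and $A(H)=A(R_1)+A(R_2)$ is read off piecewise. On the class-constant space $A(G)$ acts as the quotient, splitting off the trivial eigenvalue and an $(\ell-1)$-dimensional space where $A(H)=\bigl(\frac{v}{t}-1\bigr)+(-t)=\frac{v-t}{t}-t$; on the complement $A(G)$ restricts to the $r$- and $s$-eigenspaces of $G$, giving $A(H)=r-1=\frac{v-1}{t-1}-t-2$ and $A(H)=-t-1$. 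The multiplicities are $1$, $\frac{v-t}{t-1}$, $m_r$, and $m_s-(\ell-1)$, so the two arithmetic facts to verify are $m_r=v-1$ for the block graph of a Steiner system and $m_s-(\ell-1)=\frac{(v-1)(v-t^2)}{t(t-1)}=\frac{v-1}{t}\bigl(\frac{v-t}{t-1}-t\bigr)$. Equivalently one may apply Theorem~\ref{ht} to $\overline{G}$, which carries the Hoffman spread $\rho$, and complement.

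The hard part will be the eigenspace bookkeeping, namely verifying that $\rho$ is indeed equitable and that the positive eigenspace of $G$ lies entirely in the non-class-constant part, so that the four common eigenspaces are exactly as described and pick up the correct multiplicities; once this structural step is in place, the counts for the valency, for $\mu(H)=t(t+2)$, and for the two $\lambda$-values are short and follow uniformly from the single Hoffman property that each outside block meets a class in exactly $t$ blocks.
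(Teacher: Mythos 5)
Your proposal is correct, but it takes a genuinely different route from the paper. The paper's proof is essentially a two-line reduction: since $\overline{H}$ is the graph $G'$ of Theorem \ref{ht} applied to $\overline{G}$ (which carries the Hoffman spread $\rho$), Theorem \ref{ht} delivers the spectrum by complementation and the association-scheme structure, Theorem \ref{level2} then gives level $2$, and the only thing computed by hand is $\mu(H)$ — and that computation is word-for-word the same as yours: $t^2$ common $G$-neighbours plus $t$ neighbours of each vertex in the other's class, using the Hoffman co-clique property. You instead verify everything directly: you compute both $\lambda$-values ($\lambda_1=\frac{v}{t}-2+t^2$ on class-edges, $\lambda_2=t^2-2+\frac{v-t}{t-1}$ on intersecting-block edges) to establish level $2$ without invoking van Dam's theorem, and you obtain the spectrum by simultaneously diagonalizing $A(R_1)$ and $A(G)$ using the equitable partition $\rho$ with quotient matrix $t(J_\ell-I_\ell)$. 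I checked your arithmetic: the quotient eigenvalues are $k$ and $-t=s$ with multiplicity $\ell-1=\frac{v-t}{t-1}$, so the $r$-eigenspace of $G$ is indeed orthogonal to the class-constant space, and the identities $m_r=v-1$ and $m_s-(\ell-1)=\frac{(v-1)(v-t^2)}{t(t-1)}=f_3$ hold. Your approach is longer but buys two things the paper's does not make explicit: the actual values of the two local parameters $\lambda_1,\lambda_2$, and the observation that when $v=t^2$ (the affine-plane case) the two values coincide and $H$ degenerates to a complete graph, so the "level $2$" claim implicitly requires $v\neq t^2$ — a caveat the paper silently omits. The paper's route is shorter and yields the stronger structural fact that $H$ is a relation graph of a $3$-class association scheme. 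The one point you flag as needing care — that the positive eigenspace of $G$ avoids the class-constant functions — is exactly the right thing to nail down, and it follows as you indicate from the quotient matrix having no eigenvalue $r$.
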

 \begin{proof}
 	By Theorem \ref{level2} and Theorem \ref{ht}, 
 it suffices to show that $H$ is co-edge-regular with parameter $t(t+2)$.
 	Let $x,y\in V(H)$ be two non-adjacent  vertices. Without loss of generality, assume that $x\in \mathcal{B}_1$ and $y\in\mathcal{B}_2$. 
	 	Since $\mathcal{B}_i$ is a Hoffman co-clique of $G$ for each $i$, Theorem \ref{hoff} implies that $x$ (resp. $y$) has exactly $t$ neighbors in $\mathcal{B}_2$ (resp. $\mathcal{B}_1$). Note that $G$ is co-edge-regular with parameter $t^2$.
	 	Therefore, $|N_H(x)\cap N_H(y)|=|N_G(x)\cap N_G(y)|+|N_H(x)\cap \mathcal{B}_2|+|N_H(y)\cap \mathcal{B}_1|=t(t+2)$. This completes the proof.
 \end{proof}

 Note that for fixed $t \geq 2$, there are infinitely many $v$ such that there exists a resolvable $2$-$(v, t, 1)$-design, see for example \cite{RWR1973}.
 These give infinitely many counterexamples to Conjecture \ref{tan}.

 \section{Another construction}\label{tls}
 In this section, we construct a new family of co-edge-regular graphs with four distinct eigenvalues of which one is $-1$. 
 We will use group-divisible orthogonal arrays for this construction. We will call these new graphs the twisted Latin Square graphs.
 In the rest of this section,
 let $q$ be a prime power and $n$ be a positive integer such that there exists a $GOA(n, q, q+1)$, say $\mathcal{O}$ with groups 
 $G_1, G_2, \ldots, G_{q+1}$. Note that, for a given prime power $q$, there are infinitely many $n$ such that exists a $GOA(n, q, q+1)$, by MacNeish's Theorem \cite{MN22}.
For $i\in[q+1]$, order the rows in $G_i$ as $r^i_j$ where $j \in [q]$, since there are exactly $q$ rows in $G_i$.
 
 By Theorem \ref{cut}, there are  $q+1$ parallel classes $\mathcal{S}_1,\ldots,\mathcal{S}_{q+1}$ in $\mathbb{F}^3_q$ satisfying:
\begin{enumerate}
	\item $|P\cap Q|=q$ for $P\in \mathcal{S}_i$ and $Q\in\mathcal{S}_j$ if $i\neq j$,
	\item $|P\cap Q\cap R|=1$ for $P\in \mathcal{S}_h$, $Q\in \mathcal{S}_i$ and $R\in\mathcal{S}_j$, if $1 \leq h <i < j \leq q+1$.
\end{enumerate}

Let $\mathcal{S}_i = \{ P^i_1, P^i_2, \ldots, P^i_q\}$ for $i\in[q+1].$
We consider the planes $P^i_j$ as subsets of order $q^2$ of $\mathbb{F}^3_q$ for $i\in[q+1]$ and $j\in[q]$. 

Now we are ready to define the twisted Latin Square graphs. As we will see later they have the same spectra as certain clique-extensions of certain Latin Square graphs.

The {\em twisted Latin Square graph} with parameters $(q, n)$ with respect to $\mathcal{O}$, abbreviated with $TLS(q, n)$, is the graph with vertex set  $\mathbb{F}^3_q \times [n^2]$.
Two distinct vertices $(\mathbf{x}, i)$ and $(\mathbf{y}, j)$ of $TLS(q, n)$ are adjacent if $i=j$ or if there exists a plane $P^p_{\ell}$ such that $\mathbf x$ and $\mathbf y$ both lie on and
$r^p_{\ell}(i) = r^p_{\ell}(j)$. 

For $s \in [q+1]$, $t \in [q]$ and $\ell \in [n]$ define the set $C(s,t, \ell)$ as the set consisting of the vertices $(\mathbf{x}, i)$ such that $\mathbf x \in P^s_t$ and 
$r^s_{t}(i)= \ell$. It is clear that the set $C(s,t, \ell)$ induces a clique in $TLS(q, n)$ of order $q^2n$. 

\subsection{Structure of $TLS(q,n)$}

We denote $F(i):=\{(\mathbf{x},i)\mid \mathbf{x}\in\mathbb{F}^3_q\}$ for $i =1, 2, \ldots, n^2$
and $P^s_t(i):=\{(\mathbf{x},i)\mid \mathbf{x}\in P^s_t\}$ for $s \in [q+1]$, $t \in [q]$, and $i \in [n^2]$.

\begin{lem}\label{inter}
Let $s_1, s_2 \in [q+1]$, $t_1, t_2 \in [q]$ and $\ell_1, \ell_2 \in [n]$. Assume $C(s_1, t_1, \ell_1) \neq C(s_2, t_2, \ell_2)$. 
If $s_1 \neq s_2$,
then there exists an integer $i$ such that $C(s_1, t_1, \ell_1) \cap C(s_2, t_2, \ell_2)=P^{s_1}_{t_1}(i)\cap P^{s_2}_{t_2}(i)\subseteq F(i)$. 
and $|C(s_1, t_1, \ell_1) \cap C(s_2, t_2, \ell_2)| = q$. If $s_1 = s_2$, then $|C(s_1, t_1, \ell_1) \cap C(s_2, t_2, \ell_2)|=0$.
\end{lem}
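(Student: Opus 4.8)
The plan is to unwind the two clique-sets directly and split the intersection into two independent conditions — one on the point $\mathbf{x}\in\mathbb{F}_q^3$ and one on the column index $i\in[n^2]$ — and then count each separately. By the definition of $C(s,t,\ell)$, a vertex $(\mathbf{x},i)$ lies in $C(s_1,t_1,\ell_1)\cap C(s_2,t_2,\ell_2)$ if and only if $\mathbf{x}\in P^{s_1}_{t_1}\cap P^{s_2}_{t_2}$ and at the same time $r^{s_1}_{t_1}(i)=\ell_1$ and $r^{s_2}_{t_2}(i)=\ell_2$. Since these two requirements involve disjoint coordinates of the vertex, I would determine the admissible $\mathbf{x}$ and the admissible $i$ independently and combine them.

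First I would dispose of the case $s_1=s_2=:s$. If $t_1\neq t_2$, then $P^s_{t_1}$ and $P^s_{t_2}$ are distinct members of the same parallel class $\mathcal{S}_s$, hence parallel and therefore disjoint, so no $\mathbf{x}$ meets the geometric condition and the intersection is empty. If instead $t_1=t_2$, the column condition forces $\ell_1=r^s_{t_1}(i)=\ell_2$; but $\ell_1=\ell_2$ together with $t_1=t_2$ would make the two clique-sets equal, contradicting the standing hypothesis $C(s_1,t_1,\ell_1)\neq C(s_2,t_2,\ell_2)$. Thus the intersection is again empty. I would emphasize that this case uses only the geometry of the parallel classes and invokes nothing about the orthogonal array, and that the exclusion hypothesis is needed only in the sub-case $t_1=t_2$.

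For the main case $s_1\neq s_2$ I would feed each coordinate to one of the two structural inputs supplied earlier. For the point coordinate, property (i) of Theorem \ref{cut} gives $|P^{s_1}_{t_1}\cap P^{s_2}_{t_2}|=q$, so there are exactly $q$ admissible $\mathbf{x}$. For the column coordinate, the rows $r^{s_1}_{t_1}$ and $r^{s_2}_{t_2}$ lie in the distinct groups $G_{s_1}\neq G_{s_2}$ of the group-divisible orthogonal array $\mathcal{O}$; by the defining property of a $GOA(n,q,q+1)$, the map $i\mapsto(r^{s_1}_{t_1}(i),r^{s_2}_{t_2}(i))$ assumes each value of $[n]^2$ exactly once, so there is a unique column $i_0$ with $r^{s_1}_{t_1}(i_0)=\ell_1$ and $r^{s_2}_{t_2}(i_0)=\ell_2$. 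Combining the two counts, the intersection equals $\{(\mathbf{x},i_0)\mid \mathbf{x}\in P^{s_1}_{t_1}\cap P^{s_2}_{t_2}\}$, which in the fixed notation is exactly $P^{s_1}_{t_1}(i_0)\cap P^{s_2}_{t_2}(i_0)\subseteq F(i_0)$, of cardinality $q$.

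I expect no genuine obstacle; the result is bookkeeping once the intersection is factored into its point-part and column-part. The only step meriting care is confirming that the cross-group $GOA$ property really pins down a \emph{single} column $i_0$ (and that distinct columns yield distinct vertices, so there is no overcounting), which is what converts the intersection into the clean product $\{q \text{ points}\}\times\{i_0\}$ asserted in the statement.
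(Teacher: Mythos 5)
Your proposal is correct and follows essentially the same route as the paper's own proof: both arguments factor membership in the intersection into the plane condition (handled by Theorem \ref{cut}, giving $q$ points) and the column condition (handled by the cross-group property of the $GOA$, giving a unique column), and both treat the $s_1=s_2$ case by splitting on whether $t_1=t_2$. Your write-up is, if anything, slightly more explicit than the paper's about why the hypothesis $C(s_1,t_1,\ell_1)\neq C(s_2,t_2,\ell_2)$ forces $\ell_1\neq\ell_2$ in the last sub-case.
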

\begin{proof}
Assume $s_1 \neq s_2$. 
By Theorem \ref{cut}, the planes $P^{s_1}_{t_1}$ and $P^{s_2}_{t_2}$ intersect in $q$ vectors $\mathbf{x}_1$, $\mathbf{x}_2$,$\ldots$, $\mathbf{x}_q$. 
There exists exactly one position $i\in [n^2]$ such that $r^{s_1}_{t_1}(i)= \ell_1$ and $r^{s_2}_{t_2}(i)= \ell_2$, by the definition of $\mathcal{O}$. 
This shows that $C(s_1, t_1, \ell_1) \cap C(s_2, t_2, \ell_2)=P^{s_1}_{t_1}(i)\cap P^{s_2}_{t_2}(i)$ and $|C(s_1, t_1, \ell_1) \cap C(s_2, t_2, \ell_2)| = q$ in this case. 
Now assume $s_1= s_2$. If $t_1 \neq t_2$, then the planes $P^{s_1}_{t_1}$ and $P^{s_2}_{t_2}$ are parallel and disjoint, and thus 
$|C(s_1, t_1, \ell_1) \cap C(s_2, t_2, \ell_2)| = 0$.
If $t_1 = t_2$, then $\ell_1 \neq \ell_2$ and again it is clear that $|C(s_1, t_1, \ell_1) \cap C(s_2, t_2, \ell_2)| = 0$.
This shows the lemma.
\end{proof}

\begin{lem}\label{reg}
Any twisted Latin Square graph $TLS(q, n)$ is regular with valency $k = q^3 -1 + (q+1)(n-1)q^2$.
\end{lem}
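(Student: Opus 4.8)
The plan is to fix an arbitrary vertex $v = (\mathbf{x}_0, i_0)$ and count its neighbours, organising them by their second coordinate. Neighbours sharing the second coordinate $i_0$ are, by the first clause of the adjacency rule, exactly the remaining vertices of the clique $F(i_0)$; since $|F(i_0)| = q^3$, this accounts for $q^3 - 1$ neighbours. It then remains to count the neighbours $(\mathbf{y}, j)$ with $j \neq i_0$, and the target formula suggests these should number $(q+1)(n-1)q^2$.

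For such a neighbour the second clause of the adjacency rule forces $\mathbf{x}_0$ and $\mathbf{y}$ to lie on a common plane $P^s_t$ with $r^s_t(i_0) = r^s_t(j)$; equivalently, $v$ and $(\mathbf{y}, j)$ lie in a common clique $C(s,t,\ell)$. So I would first determine all cliques $C(s,t,\ell)$ containing $v$. Since each parallel class $\mathcal{S}_s$ partitions $\mathbb{F}_q^3$, the vector $\mathbf{x}_0$ lies on a unique plane $P^s_{t(s)}$ of $\mathcal{S}_s$, and then $\ell$ is forced to equal $\ell(s) := r^s_{t(s)}(i_0)$. Hence $v$ belongs to exactly one clique $C_s := C(s, t(s), \ell(s))$ for each $s \in [q+1]$, i.e. to precisely $q+1$ of these cliques. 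Each $C_s$ has $q^2 n$ vertices, of which the $q^2$ vertices of $P^s_{t(s)}(i_0)$ have second coordinate $i_0$, leaving $q^2(n-1)$ vertices with second coordinate different from $i_0$.

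The crux is to show that these $q+1$ contributions do not overlap off $F(i_0)$. For $s_1 \neq s_2$, Lemma \ref{inter} gives $C_{s_1} \cap C_{s_2} \subseteq F(i)$ for the unique position $i$ with $r^{s_1}_{t(s_1)}(i) = \ell(s_1)$ and $r^{s_2}_{t(s_2)}(i) = \ell(s_2)$; but $i_0$ satisfies both equalities by the very definition of $\ell(s_1)$ and $\ell(s_2)$, so $i = i_0$ and therefore $C_{s_1} \cap C_{s_2} \subseteq F(i_0)$. Consequently the sets $C_s \setminus F(i_0)$ are pairwise disjoint, and since every neighbour of $v$ with second coordinate $\neq i_0$ lies in one of the $C_s$, their number is exactly $\sum_{s=1}^{q+1} |C_s \setminus F(i_0)| = (q+1)q^2(n-1)$. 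Adding the $q^3 - 1$ neighbours inside $F(i_0)$ yields $k = q^3 - 1 + (q+1)(n-1)q^2$, a value independent of the choice of $v$, so $TLS(q,n)$ is regular of this valency. I expect the disjointness step to be the main obstacle: without identifying the shared intersection position as $i_0$, one would risk overcounting neighbours lying in several of the cliques $C_s$ simultaneously, and it is precisely Lemma \ref{inter} together with the definition of the $\ell(s)$ that rules this out.
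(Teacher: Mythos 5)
Your proof is correct and follows essentially the same route as the paper: count the $q^3-1$ neighbours inside $F(i_0)$, observe that the remaining neighbours lie in the $q+1$ cliques $C(s,t_s,r^s_{t_s}(i_0))$ through $v$, each contributing $q^2(n-1)$ vertices outside $F(i_0)$, and use Lemma \ref{inter} to see that these contributions are pairwise disjoint off $F(i_0)$. Your write-up is in fact slightly more careful than the paper's, since you make explicit both that the pairwise intersection position must be $i_0$ and that every neighbour with second coordinate $\neq i_0$ is captured by one of the $q+1$ cliques.
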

\begin{proof}
Let $(\mathbf{x}, i)$ be a vertex of $TLS(q,n)$. 
Then it has $q^3-1$ neighbours in $F(i)$.
The vector $\mathbf{x}$ lies in the unique plane, say $P^s_{t_s}$, in the parallel class $\mathcal{S}_s$ for $s \in [q+1]$.
So $(\mathbf{x}, i)$ has exactly $q^2(n-1)$ neighbours in $C(s, t_s, r_{t_s}^s(i))-F(i)=C(s, t_s, r_{t_s}^s(i))-P^s_{t_s}(i)$. 
If $s_1 \neq s_2$, then, by 
Lemma \ref{inter}, $C(s_1, t_{s_1}, \ell_1) \cap C(s_2, t_{s_2}, \ell_2)=P^{s_1}_{t_{s_1}}(i)\cap P^{s_2}_{t_{s_2}}(i)\subset F(i)$, as $(\mathbf{x}, i)$ is one of them. 
It follows that the valency of 
 $(\mathbf{x}, i)$ is exactly $q^3-1 + (q+1)(n-1)q^2$. 
\end{proof}

 \begin{lem}\label{xc}
 			Let $s \in [q+1]$, $t\in [q]$ and $\ell \in [n]$.
 			Let $v=(\mathbf{x},i)$ be  a vertex not in $C(s,t,\ell)$.
			Assume that $\mathbf{x}$ lies in planes $P^j_{t_j}$ for $j \in [q+1]$. 
			If $r^{s}_{t}(i)=\ell$, then  $C(s,t,\ell)\cap N(v)=P^s_t(i)$ holds. 
			If $r^{s}_{t}(i) \neq \ell$, then there exists pairwise distinct $m_p$ with $p \in [q+1]-\{s\}$
			such that $$C(s,t,\ell)\cap N(v) = \bigcup_{p=1, p \neq s}^{q+1}P^p_{t_p}(m_p)\cap P^s_t(m_p).$$
			In particular, $v$ has exactly $q^2$ neighbours in $C(s,t,\ell)$.
 \end{lem}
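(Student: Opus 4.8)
The plan is to sort the neighbours of $v=(\mathbf{x},i)$ that lie in $C(s,t,\ell)$ according to \emph{which} rule realises the adjacency, and the single engine driving every step is the defining property of the group-divisible orthogonal array $\mathcal{O}$: two rows belonging to \emph{different} groups meet every pair of symbols of $[n]\times[n]$ exactly once. I would first isolate this as a basic observation: if $r^a_b$ and $r^c_d$ lie in different groups (i.e.\ $a\neq c$) and satisfy $r^a_b(j)=r^a_b(i)$ together with $r^c_d(j)=r^c_d(i)$, then $j=i$. The second ingredient is Theorem \ref{cut}, which records that two planes from distinct parallel classes meet in $q$ points and three from distinct classes in a single point. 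Throughout I use that $\mathbf{x}$ lies on exactly one plane $P^p_{t_p}$ of each class $\mathcal{S}_p$, so any plane realising an adjacency of $v$ must be one of the $P^p_{t_p}$.

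For the case $r^{s}_{t}(i)=\ell$: since $v\notin C(s,t,\ell)$ this forces $\mathbf{x}\notin P^s_t$, hence $t_s\neq t$. Every vertex of $P^s_t(i)$ lies in $C(s,t,\ell)$ and is adjacent to $v$ by the rule $i=j$, while $v\notin P^s_t(i)$; so $P^s_t(i)\subseteq C(s,t,\ell)\cap N(v)$. To see there is nothing more, I would suppose $(\mathbf{y},j)\in C(s,t,\ell)\cap N(v)$ with $j\neq i$. Its adjacency is realised by some $P^p_{t_p}$ with $\mathbf{y}\in P^s_t\cap P^p_{t_p}$ and $r^p_{t_p}(i)=r^p_{t_p}(j)$; the choice $p=s$ is impossible because $P^s_{t_s}$ and $P^s_t$ are parallel and disjoint. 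Thus $p\neq s$, and since $r^s_t(j)=\ell=r^s_t(i)$, the basic observation forces $j=i$, a contradiction. Hence $C(s,t,\ell)\cap N(v)=P^s_t(i)$, of size $q^2$.

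For the case $r^{s}_{t}(i)\neq\ell$: now $i=j$ yields no neighbour in $C(s,t,\ell)$, and as above $p=s$ is ruled out whether or not $\mathbf{x}\in P^s_t$, so every neighbour is realised through some $P^p_{t_p}$ with $p\neq s$, $\mathbf{y}\in P^s_t\cap P^p_{t_p}$, $r^p_{t_p}(i)=r^p_{t_p}(j)$ and $r^s_t(j)=\ell$. Because $r^p_{t_p}$ and $r^s_t$ sit in different groups, there is a \emph{unique} column $m_p$ with $r^p_{t_p}(m_p)=r^p_{t_p}(i)$ and $r^s_t(m_p)=\ell$; I would then check that the neighbours realised through $P^p_{t_p}$ are exactly the $(\mathbf{y},m_p)$ with $\mathbf{y}\in P^s_t\cap P^p_{t_p}$, i.e.\ the set $P^p_{t_p}(m_p)\cap P^s_t(m_p)$, which by Theorem \ref{cut} has $q$ vertices.

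It remains to show these $q$ contributions are disjoint and exhaustive, and this is where I expect the only real care to be needed. Applying the basic observation to the two rows $r^p_{t_p}$, $r^{p'}_{t_{p'}}$ (again in different groups) shows that $m_p=m_{p'}$ for $p\neq p'$ would force $m_p=i$ and hence $r^s_t(i)=\ell$, a contradiction; so the $m_p$ are pairwise distinct and the sets $P^p_{t_p}(m_p)\cap P^s_t(m_p)\subseteq F(m_p)$ lie in distinct fibres. The same observation rules out a neighbour being realised through two different planes. Summing over the $q$ values of $p\in[q+1]\setminus\{s\}$ gives the stated union and the count $q\cdot q=q^2$. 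The conceptual substance is entirely contained in the basic observation and Theorem \ref{cut}; the main obstacle is purely the bookkeeping of attributing each adjacency to the correct plane $P^p_{t_p}$.
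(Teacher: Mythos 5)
Your proof is correct and follows essentially the same route as the paper's: the same case split on whether $r^s_t(i)=\ell$, attribution of each neighbour of $v$ in $C(s,t,\ell)$ to one of the cliques $C(p,t_p,r^p_{t_p}(i))$ through $v$, identification of the unique columns $m_p$, and the distinctness argument via the orthogonal-array property giving the count $q\cdot q=q^2$. The only cosmetic difference is that you re-derive the clique-intersection facts directly from the definition of $\mathcal{O}$, where the paper packages them as Lemma \ref{inter}.
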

 		
\begin{proof}
 			Assume that $v=(\mathbf{x},i)$.
 			The vector $\mathbf{x}$ lies in a unique plane, say $P^j_{t_j}$, in the parallel class $\mathcal{S}_j$ for $j \in [q+1]$.
 			Hence, $(\mathbf{x},i)$ lies in the clique $C(j,t_j,r^j_{t_j}(i))$ for $j\in[q+1]$.
 			
 			If $r^{s}_{t}(i)=\ell$, then $P^s_t(i)=C(s,t,\ell)\cap F(i)\subset C(s,t,\ell)\cap N(v)$. 
			So $v$ has at least $q^2$ neighbours in $C(s,t,\ell)$, and $t_s \neq t$ as otherwise $v \in C(s,t,\ell)$.
			As 	$(\mathbf{x},i)$ lies in the clique $C(j,t_j,r^j_{t_j}(i))$ for $j\in[q+1]$	and,	by Lemma \ref{inter}, $C(s,t,\ell)\cap C(j,t_j,r_{t_j}^j(i))=P^s_t(i)\cap P^j_{t_j}\subset P^s_t(i)$ if $j\neq s$.
 			This means that  $|C(s,t,\ell)\cap N(v)|=|P^s_t(i)|=q^2$ holds.
 			
 			If $r^{s}_{t}(i)\neq \ell$, then  $C(s,t,\ell)\cap N(v)=C(s,t,\ell)\cap(\bigcup_{i\neq s}C(j,t_j,r^j_{t_j}(i)))$.
 			By Lemma \ref{inter}, there exists an integer $m_j$ such that 
 			$C(s,t,\ell)\cap C(j,t_j,r^j_{t_j}(i))=P^s_t(m_j)\cap P^j_{t_j}(m_j)$ if $j\neq s$.
 			As $r^{s}_{t}(i)\neq \ell$ and $r^{s}_{t}(m_j)= \ell$, we have $i\neq m_j$ for all $j \neq s$.
 			
 			Now we show that $m_j\neq m_{j'}$ for distinct $j,j'\in[q+1]-\{s\}$.
		In fact, we have $(r^j_{t_j}(m_j),r^{j’}_{t_{j’}}(m_j))\neq(r^j_{t_j}(i),r^{j'}_{t_{j'}}(i))$, by the definition of $\mathcal{O}$, as $i\neq m_j$.
 			Thus, $m_j\neq m_{j'}$ for distinct $j,j'\in[q+1]-\{s\}$.
 			
 			This implies that $|C(s,t,\ell)\cap N(v)|=|C(s,t,\ell)\cap(\bigcup_{i\neq s}C(j,t_j,r^j_{t_j}(i)))|=|\bigcup_{i\neq s}
 			(P^s_t(m_j)\cap P^j_{t_j}(m_j))|=\sum_{i\neq s}|(P^s_t(m_j)\cap P^j_{t_j}(m_j))|=q^2$ by Theorem \ref{cut}.
 		\end{proof}

 		\begin{lem}\label{ev}
 			Each twisted Latin Square graph $TSL(q,n)$ has $q^2(n-1)-1$ as an eigenvalue.
 		\end{lem}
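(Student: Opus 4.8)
The plan is to exhibit an explicit eigenvector for $q^2(n-1)-1$ by exploiting the clique partition of the vertex set coming from a single parallel class. First I would fix an index $s\in[q+1]$. Since the planes $P^s_1,\ldots,P^s_q$ partition $\mathbb{F}^3_q$, every vertex $(\mathbf{x},i)$ lies in exactly one plane $P^s_t$ of $\mathcal{S}_s$ and hence in the unique clique $C(s,t,r^s_t(i))$ of ``family $s$''. Thus the $qn$ cliques $\{C(s,t,\ell)\mid t\in[q],\ \ell\in[n]\}$, each of order $q^2n$, partition $V(TLS(q,n))$, and this partition is the backbone of the argument.

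Next I would choose any assignment $a\colon[q]\times[n]\to\mathbb{R}$ with $\sum_{t,\ell}a(t,\ell)=0$ and $a\not\equiv 0$; such an $a$ exists because there are $qn\ge 2$ cliques. Define a vector $\mathbf{v}$ on $V(TLS(q,n))$ by $\mathbf{v}_{(\mathbf{x},i)}=a(t,r^s_t(i))$ whenever $\mathbf{x}\in P^s_t$; equivalently, $\mathbf{v}$ is constant equal to $a(t,\ell)$ on each clique $C(s,t,\ell)$. This is well defined by the partition property, and $\mathbf{v}\neq 0$ since $a\not\equiv 0$.

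Then I would compute $A\mathbf{v}$. Let $u$ lie in the clique $C(s,t_0,\ell_0)$. As this clique has order $q^2n$, its other $q^2n-1$ vertices are neighbours of $u$, each contributing $a(t_0,\ell_0)$. For every other clique $C(s,t',\ell')$ of family $s$, Lemma \ref{xc} gives that $u$ (which lies outside it) has exactly $q^2$ neighbours in it, each contributing $a(t',\ell')$; moreover, since the family-$s$ cliques partition $V$ and $(qn-1)q^2+(q^2n-1)=k$ by Lemma \ref{reg}, these exhaust all neighbours of $u$. Writing $T:=\sum_{t',\ell'}a(t',\ell')$, this yields
\[
(A\mathbf{v})_u=(q^2n-1)\,a(t_0,\ell_0)+q^2\sum_{(t',\ell')\neq(t_0,\ell_0)}a(t',\ell')=\bigl(q^2(n-1)-1\bigr)a(t_0,\ell_0)+q^2T .
\]
Since $T=0$ by the choice of $a$, this reads $(A\mathbf{v})_u=(q^2(n-1)-1)\mathbf{v}_u$ for every vertex $u$, so $\mathbf{v}$ is an eigenvector of $A(TLS(q,n))$ with eigenvalue $q^2(n-1)-1$.

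The only substantive ingredient is the uniform count ``exactly $q^2$ neighbours in every foreign clique of family $s$,'' which is precisely the content of Lemma \ref{xc}; once that is granted, the partition behaves like an equitable partition and the remaining work is pure linear algebra. For this reason I do not expect a genuine obstacle here, the real difficulty having already been absorbed into Lemmas \ref{inter} and \ref{xc}. The only points meriting care are the well-definedness of $\mathbf{v}$ (guaranteed by the partition property) and the verification that the $q^2$-count in Lemma \ref{xc} is independent of both $u$ and the chosen clique, which is exactly what forces a single eigenvalue rather than a spread of values.
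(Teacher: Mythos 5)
Your proof is correct. It rests on exactly the same key input as the paper's proof, namely the uniform count from Lemma \ref{xc} that every vertex outside $C(s,t,\ell)$ has precisely $q^2$ neighbours inside it, but it packages that input differently: the paper takes the coarse two-part equitable partition $\{C(s,t,\ell),\,V\setminus C(s,t,\ell)\}$ and reads the eigenvalue $q^2(n-1)-1$ off the $2\times 2$ quotient matrix $\left(\begin{smallmatrix}nq^2-1 & k-nq^2+1\\ q^2 & k-q^2\end{smallmatrix}\right)$, whereas you refine to the full partition of $V$ into the $qn$ cliques of a single family $s$ and write down explicit eigenvectors that are constant on the parts with zero total weight. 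The two arguments are equivalent in substance, and your bookkeeping (the partition property of the family-$s$ cliques, and the check that $(q^2n-1)+(qn-1)q^2=k$ so that all neighbours are accounted for) is accurate. Your version buys slightly more than the lemma asks for: the space of admissible weight functions $a$ has dimension $qn-1$, so you obtain for free that $q^2(n-1)-1$ has multiplicity at least $qn-1$, which is consistent with (though short of) the multiplicity $(qn-1)(q+1)$ established later via Theorem \ref{4ev} and the trace conditions.
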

 		\begin{proof}
 			Let $V$ be the vertex set of  $TSL(q,n)$. By Lemma \ref{reg}, $TLS(q, n)$ is regular with valency $k = q^3 -1 + (q+1)(n-1)q^2$.
 			By Lemma \ref{xc}, for $s \in [q+1]$, we have a equitable partition $\{C(s,t,\ell),V-C(s,t,\ell)\}$ with quotient matrix 
			$\footnotesize\begin{pmatrix}nq^2-1 & k-nq^2+1 \\ q^2 & k-q^2 	\end{pmatrix}$  having eigenvalues  $q^2(n-1)-1$ and $k$. 
			Therefore, $q^2(n-1)-1$ is an eigenvalue of $TSL(q,s)$.
 		\end{proof}
 		
                  \begin{lem}\label{plane} Let  $u= (\mathbf{x}, i)$ be a vertex in $TLS(q,n)$ where $\mathbf{x} \in \mathbb{F}^3_q$ and $i \in [n^2]$.
                  If $u$ has neighbours in $F(m)$ where $m \neq i$ and $m\in[n^2]$. 
                  Then there exists $s \in [q+1]$ and $t \in [q]$ such that $F(m) \cap N(u) = P^s_t(m)$.
                  \end{lem}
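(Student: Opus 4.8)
The plan is to reduce adjacency between the copies $F(i)$ and $F(m)$ to a condition on the rows of the group-divisible orthogonal array $\mathcal{O}$, and then to exploit the orthogonality of rows from distinct groups to force uniqueness of the mediating plane.

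First I would record that the vector $\mathbf{x}$ lies in exactly one plane of each parallel class; write $\mathbf{x}\in P^p_{t_p}$ for $p\in[q+1]$, so that the only planes through $\mathbf{x}$ are $P^1_{t_1},\ldots,P^{q+1}_{t_{q+1}}$. Since $m\neq i$, the definition of $TLS(q,n)$ shows that a vertex $(\mathbf{y},m)\in F(m)$ is adjacent to $u=(\mathbf{x},i)$ if and only if there is some $p\in[q+1]$ with $\mathbf{y}\in P^p_{t_p}$ and $r^p_{t_p}(i)=r^p_{t_p}(m)$: indeed any mediating plane must contain $\mathbf{x}$ and hence be one of the $P^p_{t_p}$, which pins the second index to $t_p$.

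The key step is to show that \emph{at most one} index $p$ can satisfy $r^p_{t_p}(i)=r^p_{t_p}(m)$. Suppose two distinct indices $p\neq p'$ both did. The rows $r^p_{t_p}$ and $r^{p'}_{t_{p'}}$ belong to the distinct groups $G_p$ and $G_{p'}$, so by the defining property of the $GOA(n,q,q+1)$ the ordered pairs $(r^p_{t_p}(\ell),r^{p'}_{t_{p'}}(\ell))$ are pairwise distinct as $\ell$ ranges over $[n^2]$. But positions $i$ and $m$ would then yield the same pair $(r^p_{t_p}(i),r^{p'}_{t_{p'}}(i))=(r^p_{t_p}(m),r^{p'}_{t_{p'}}(m))$, forcing $i=m$, a contradiction. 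Hence at most one such $p$ exists.

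Finally, the hypothesis that $u$ has some neighbour in $F(m)$ guarantees that at least one index $p$ works, so exactly one does; call it $s$ and set $t:=t_s$. For this $s$, a vertex $(\mathbf{y},m)$ is adjacent to $u$ precisely when $\mathbf{y}\in P^s_t$, since no other plane through $\mathbf{x}$ meets the row condition. Therefore $F(m)\cap N(u)=\{(\mathbf{y},m)\mid \mathbf{y}\in P^s_t\}=P^s_t(m)$, as claimed. The only delicate point is the uniqueness argument, which is exactly where the orthogonal-array structure (rather than the mere incidence geometry of the planes supplied by Theorem \ref{cut}) enters; everything else is an unwinding of the definitions.
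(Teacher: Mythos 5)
Your proof is correct and follows essentially the same route as the paper: both arguments reduce adjacency across $F(i)$ and $F(m)$ to the existence of a plane through $\mathbf{x}$ (hence one of the $P^p_{t_p}$) whose associated row agrees at positions $i$ and $m$, and both use the orthogonality of rows from distinct groups of the $GOA(n,q,q+1)$ to show that at most one group index can satisfy this, which pins down $F(m)\cap N(u)=P^s_t(m)$. The only difference is presentational — you prove uniqueness of the mediating class up front, while the paper starts from a given neighbour and then rules out the other classes.
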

                  \begin{proof}
                  Let $v= (\mathbf{y}, m)$ be a neighbour of $u$ in $F(m).$ This means that there exists $s\in [q+1], t\in [q]$ and $\ell \in [n]$ such that 
                  $u, v \in C(s, t, \ell)$, which implies $\ell = r^s_t(i) = r^s_t(m)$. It follows  that $N(u) \cap F(m) \supseteq P^s_t(m)$. 
                  Let $s_1 \in [q+1], t_1\in [q]$ such that $s_1 \neq s$. Then $r^{s_1}_{t_1}(i) \neq r^{s_1}_{t_1}(m)$ by the definition of $\mathcal{O}$. 
                  As a consequence, we find that $N(u) \cap F(m)=P^s_t(m)$. This shows the lemma.
                  \end{proof}

 		\subsection{Local graphs of $TLS(q,n)$}
 		Let $u=(\mathbf{x},m)$ be a vertex of the graph $TLS(q,n)$, where $\mathbf{x} \in\mathbb{F}^3_q$ and $ m \in [n^2]$.
 		The vector $\mathbf{x}$ lies in the unique plane, say $P^s_{t_s}$, in the parallel class $\mathcal{S}_s$ for $s \in [q+1]$.
 		This means that  $(\mathbf{x},m)$ lies in the clique $C(s, t_s ,r^i_{t_s}(m))$ for $s\in[q+1]$.
 		We define the set $A_{ij}= A_{ij}(u)$ as follows:
		Let $A_{ij}:=P^i_{t_i}(m)\cap P^j_{t_j}(m)-\{(\mathbf{x},m)\}$ for distinct $i, j\in[q+1]$.

 		For $i \in [q+1]$, we define the sets $B_i = B_i(u)$, $C_i= C_i(u)$ and $R = R(u)$ by 
		\newline
		\noindent
		$B_i:=P^i_{t_i}(m)-\bigcup_{j\neq i}P^j_{t_j}(m)$,
		$C_i:=C(i,t_i,r^i_{t_i}(m))-P^i_{t_i}(m)$,  and 
		$R:=F(m)-\bigcup_{j=1}^{q+1}P^j_{t_j}(m)$.
		
		We first give the cardinalities of these sets and also show that they are mutually disjoint. 
 		 		
 		\begin{pro}\label{abc}
 			Let  $A_{ij}$, $B_i$, $C_i$, and $R$ be the sets as defined above.
 			These sets satisfy the following properties:
			
 			\begin{enumerate}
			        \item The sets $A_{ij}$, $B_h$, $C_p$, and $R$ are mutually disjoint for $i, j, h, p \in [q+1]$ and $i \neq j$; 
			         \item $|A_{ij}|=q-1$ for $i,j \in[q+1]$ and $i\neq j$;
 				\item $|B_i|=q-1$ for $i\in [q+1]$;
				\item  $|C_i|=(n-1)q^2$ for $ i \in [q+1]$;
 				\item $|R|=\frac{q(q-1)^2}{2}$ for $i\in[q+1]$.
 				
 			\end{enumerate}
 		\end{pro}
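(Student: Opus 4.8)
The plan is to classify every vector $\mathbf{z}\in\mathbb{F}_q^3$ by the number $\nu(\mathbf{z})$ of planes among $P^1_{t_1},\ldots,P^{q+1}_{t_{q+1}}$ containing it, and then read off both the disjointness in (i) and the cardinalities (ii)--(v) from this classification. The base point $\mathbf{x}$ lies in all $q+1$ of these planes by construction, so $\nu(\mathbf{x})=q+1$. On the other hand, Theorem \ref{cut}(ii) forces $\nu(\mathbf{z})\le 2$ for every $\mathbf{z}\neq\mathbf{x}$: a vector lying in three of the planes would, since these come from three distinct parallel classes, be the unique point of their triple intersection, which is $\mathbf{x}$. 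Hence $\nu(\mathbf{z})\in\{0,1,2\}$ for all $\mathbf{z}\neq\mathbf{x}$.

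For the disjointness claim (i) I would first separate $C_p$ from the rest. All of $A_{ij}$, $B_h$, and $R$ consist of vertices whose second coordinate is $m$, whereas removing $P^p_{t_p}(m)$ from the clique $C(p,t_p,r^p_{t_p}(m))$ deletes exactly its second-coordinate-$m$ part, so every vertex of $C_p$ has second coordinate different from $m$. This makes each $C_p$ disjoint from each $A_{ij}$, $B_h$, and $R$; and $C_p\cap C_{p'}=\emptyset$ for $p\neq p'$ follows from the orthogonality of $\mathcal{O}$, since a common vertex $(\mathbf{z},j)$ with $j\neq m$ would force $(r^p_{t_p}(j),r^{p'}_{t_{p'}}(j))=(r^p_{t_p}(m),r^{p'}_{t_{p'}}(m))$, contradicting that distinct columns produce distinct pairs for rows in distinct groups. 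Among the remaining (second-coordinate-$m$) sets the classification does the work: a vertex $(\mathbf{z},m)$ with $\mathbf{z}\neq\mathbf{x}$ lies in $R$, in a unique $B_h$, or in a unique $A_{ij}$ according as $\nu(\mathbf{z})$ equals $0$, $1$, or $2$, while $\mathbf{x}$ is excluded from all three sets. This yields all the remaining disjointness at once.

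The cardinalities then reduce to elementary counting. Claim (ii) is immediate from Theorem \ref{cut}(i): $|A_{ij}|=|P^i_{t_i}\cap P^j_{t_j}|-1=q-1$. For (iii), the $q^2$ points of $P^i_{t_i}$ decompose as $\mathbf{x}$, the points with $\nu=2$, and the points of $B_i$; for each of the $q$ indices $j\neq i$ the line $P^i_{t_i}\cap P^j_{t_j}$ contributes its $q-1$ points other than $\mathbf{x}$, and these $q$ lines meet only at $\mathbf{x}$ by the triple-intersection property, so $|B_i|=q^2-1-q(q-1)=q-1$. For (iv), the clique $C(i,t_i,r^i_{t_i}(m))$ has order $q^2n$ and we delete the $q^2$ vertices of $P^i_{t_i}(m)$, giving $|C_i|=(n-1)q^2$. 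For (v), I would compute $|R|$ as $|F(m)|$ minus $\mathbf{x}$ and minus all the $A_{ij}$ and $B_h$ contributions, namely $|R|=q^3-1-\binom{q+1}{2}(q-1)-(q+1)(q-1)$, which simplifies to $\frac{q(q-1)^2}{2}$.

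The only step requiring real care is the counting in (iii) and (v), where one must invoke Theorem \ref{cut}(ii) to guarantee that the pairwise-intersection lines through $\mathbf{x}$ share only the point $\mathbf{x}$; without this, points lying in two planes would be over-counted and the arithmetic would collapse. A useful internal check is that the four contributions sum correctly: $1+\binom{q+1}{2}(q-1)+(q+1)(q-1)+\frac{q(q-1)^2}{2}=q^3=|F(m)|$, confirming the count.
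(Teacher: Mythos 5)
Your proof is correct and follows essentially the same route as the paper: both arguments hinge on Theorem \ref{cut}(ii) to ensure that no point other than $\mathbf{x}$ lies in three of the planes $P^1_{t_1},\ldots,P^{q+1}_{t_{q+1}}$, and the cardinality computations for $|A_{ij}|$, $|B_i|$, $|C_i|$, and $|R|$ are identical. Your classification of the points of $F(m)$ by the number of planes containing them is a slightly tidier packaging of the paper's pairwise disjointness checks, and your direct orthogonal-array argument for $C_p\cap C_{p'}=\emptyset$ replaces the paper's appeal to Lemma \ref{inter}, but these are cosmetic differences.
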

		\begin{proof}
		As $A_{ij}, B_h, R$ are all subsets of $F(m)$ and $C_p \cap F(m) = \emptyset$ for $i, j, h, p \in [q+1]$, $i \neq j$ so $C_p$ is disjoint from 
		$B_h$, $R$ and $A_{ij}$.  For distinct $i, j \in [q+1]$,  by Lemma \ref{inter}, we have $$ C(i,t_i, r^i_{t_i}(m))\cap C(j,t_j, r^j_{t_j}(m))\subset F(m).$$
		So $C_i \cap C_j = \emptyset$. Clearly the $B_i $ and $B_j$  are disjoint if $i \neq j$, and $A_{ij} \cap B_{h} = \emptyset$ if $i \neq j$. It is also clear that the set $R$ is disjoint 
		from the sets $A_{ij}$ and $B_h$.Let $i_1, i_2, j_1, j_2 \in [q+1]$ such that $i_1 \neq j_1$, $i_2 \neq j_2$ and $\{i_1, j_1\}\neq \{i_2, j_2\}$. Then 
		$A_{i_1, j_1} \cap A_{i_2, j_2}$ is contained in the intersection of at least three planes all containing $(\mathbf{x}, m)$. This means that  
		$A_{i_1, j_1} \cap A_{i_2, j_2} = \emptyset$, by Theorem \ref{cut}.
		These show that all the sets in (i) are disjoint.
		 
		Now we will determine the order of the sets $A_{ij}, B_i, C_i$ and $R$.
		For $i \neq j \in [q+1]$ the set $A_{ij} = P^i_{t_i}(m) \cap P^j_{t_j}(m)\setminus u$, and hence $|A_{ij}| = q-1$ by Theorem \ref{cut}.
		It follows that $|B_i|=|P^i_{t_i}|-\sum_{j\neq i}|A_{ij}|-1= q^2 -q(q-1) -1 = q-1$ for $i \in [q+1]$, and thus 
		$$|R|=q^3-\sum_i|B_i|-\sum_{i=1}^{q+1}\sum_{j=i+1}^{q+1}|A_{ij}| -|\{u\}|=q^3 - (q+1)(q-1) - \frac{(q+1)q}{2}(q-1) - 1=\frac{q(q-1)^2}{2}.$$
		Note that $|C_i|=|C(i,t_i,r^i_{t_i}(m))|-|P^i_{t_i}(m)|=(n-1)q^2$ for $i \in [q+1]$. This shows the proposition.
		\end{proof}
		
		As $F(m)$ induces a clique and $A_{ij}, B_h, R$ are subsets of $F(m)$, any two distinct vertices in any pair of these sets are adjacent. 
		
		Now we determine the number of neighbours of $w \in C_i$ where $i \in [q+1]$.
		
		\begin{lem} \label{abc2} With the above notation, let $w \in C_i$ where $i \in [q+1]$. Then $w$ has exactly 
		\begin{enumerate}
		\item $q(q-1)$ neighbours in $C_j$ for $j \in [q+1]-\{i\}$,
		\item $q-1$ neighbours in $A_{hj}$ if $i \in \{h, j\}$ and $0$ otherwise,
		\item $q-1$ neighbours in $B_j$ if $i=j$ and $0$ otherwise,
		\item $0$ neighbours in $R$. 
 				
 		\end{enumerate}
		\end{lem}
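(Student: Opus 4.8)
The plan is to reduce everything to a single key fact: for $w\in C_i$, the neighbours of $w$ lying in $F(m)$ are exactly the vertices of $P^i_{t_i}(m)$. Once this is in hand, parts (ii)--(iv) fall out by intersecting $P^i_{t_i}(m)$ with the sets $A_{hj}$, $B_j$, $R$, all of which are subsets of $F(m)$ by their definitions and Proposition \ref{abc}.

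First I would write $w=(\mathbf{z},\ell)$ explicitly. Since $w\in C_i=C(i,t_i,r^i_{t_i}(m))-P^i_{t_i}(m)$, we have $\mathbf{z}\in P^i_{t_i}$, $r^i_{t_i}(\ell)=r^i_{t_i}(m)$ and $\ell\neq m$. To prove the key fact I would apply Lemma \ref{plane} to $w$: the vertex $u=(\mathbf{x},m)$ is a neighbour of $w$ (both lie on $P^i_{t_i}$ and $r^i_{t_i}(\ell)=r^i_{t_i}(m)$), so $N(w)\cap F(m)$ is nonempty and equals $P^s_t(m)$ for some $s,t$. To pin down $s=i$, $t=t_i$, I would show $P^i_{t_i}$ is the only plane through $\mathbf{z}$ whose associated row agrees on positions $\ell$ and $m$: for $p\neq i$, if $P^p_k$ is the plane of $\mathcal{S}_p$ through $\mathbf{z}$, then since the rows $r^i_{t_i}$ and $r^p_k$ lie in different groups of $\mathcal{O}$ and $\ell\neq m$, the pairs $(r^i_{t_i}(\ell),r^p_k(\ell))$ and $(r^i_{t_i}(m),r^p_k(m))$ are distinct; as their first coordinates agree, $r^p_k(\ell)\neq r^p_k(m)$. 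Hence no neighbour of $w$ in $F(m)$ arises from a plane outside $\mathcal{S}_i$, giving $N(w)\cap F(m)=P^i_{t_i}(m)$.

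With this, parts (ii)--(iv) are short. Since $A_{hj}\subseteq P^h_{t_h}(m)\cap P^j_{t_j}(m)$: if $i\in\{h,j\}$ then $A_{hj}\subseteq P^i_{t_i}(m)$, so $N(w)\cap A_{hj}=A_{hj}$, of size $q-1$ by Proposition \ref{abc}; if $i\notin\{h,j\}$, then $P^i_{t_i}(m)\cap A_{hj}$ is contained in the intersection of three planes from distinct parallel classes, which by Theorem \ref{cut} is the single vertex $u$, and $u$ was removed from $A_{hj}$, so the count is $0$. For part (iii), $B_j\subseteq P^j_{t_j}(m)$ and $B_j$ is disjoint from $P^k_{t_k}(m)$ for $k\neq j$; thus $N(w)\cap B_j=P^i_{t_i}(m)\cap B_j$ equals $B_j$ (size $q-1$) when $j=i$ and is empty otherwise. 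For part (iv), $R$ is disjoint from every $P^k_{t_k}(m)$ by definition, in particular from $P^i_{t_i}(m)$, so $w$ has no neighbour in $R$.

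The remaining work is part (i), which I would handle with Lemma \ref{xc}. I would first check $w\notin C(j,t_j,r^j_{t_j}(m))$ for $j\neq i$: again by the group-divisible property, $r^i_{t_i}(\ell)=r^i_{t_i}(m)$ with $\ell\neq m$ forces $r^j_{t_j}(\ell)\neq r^j_{t_j}(m)$, so $w$ cannot lie in that clique. Lemma \ref{xc} then gives exactly $q^2$ neighbours of $w$ in the full clique $C(j,t_j,r^j_{t_j}(m))$. Since $C_j=C(j,t_j,r^j_{t_j}(m))-P^j_{t_j}(m)$, I subtract the neighbours of $w$ inside $P^j_{t_j}(m)$, which by the key fact is $P^i_{t_i}(m)\cap P^j_{t_j}(m)$, of size $q$ by Theorem \ref{cut}, yielding $q^2-q=q(q-1)$. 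The main obstacle is the key fact $N(w)\cap F(m)=P^i_{t_i}(m)$ together with the non-membership $w\notin C(j,t_j,r^j_{t_j}(m))$; both hinge on the orthogonal-array axiom that rows from distinct groups realise all $n^2$ ordered pairs, which is the one genuinely combinatorial input, the rest being bookkeeping via Theorem \ref{cut}.
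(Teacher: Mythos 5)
Your proposal is correct and follows essentially the same route as the paper: the central step in both is that $N(w)\cap F(m)=P^i_{t_i}(m)$ (via Lemma \ref{plane} and the orthogonal-array property), after which (ii)--(iv) follow by intersecting with $A_{hj}$, $B_j$, $R$ using Theorem \ref{cut}, and (i) follows from the $q^2$ count of Lemma \ref{xc} minus the $q$ neighbours lying in $P^j_{t_j}(m)$. The only cosmetic difference is that the paper settles (iii) and the positive case of (ii) directly from $A_{ij},B_i\subseteq C(i,t_i,r^i_{t_i}(m))$ being a clique containing $w$, while you derive them from the key fact; you are also somewhat more explicit than the paper in pinning down the plane in Lemma \ref{plane} and in checking $w\notin C(j,t_j,r^j_{t_j}(m))$ before invoking Lemma \ref{xc}.
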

		\begin{proof}
		Note that $A_{ij}$, $B_i$ and $C_i$ are subsets of $C(i, t_i, r^i_{t_i}(m))$ for $i, j \in [q+1]$ and $i \neq j$. This shows (iii) and (ii) for $i \in \{h, j\}$.
		Assume that $w = (\mathbf{y}, p)$.  By Lemma \ref{plane}, $N(w) \cap F(m)$ is the plane $P^i_{t_i}(m)$, and hence $w$ has no neighbours in $A_{jh}$ if $ i \not \in \{j,, h\}$ by Theorem \ref{cut}. 
		This finishes the proof of (ii). It also follows that $w$ has no neighbours in $R$, showing (iv). 
		To show (i), let $ j \in [q+1] \setminus \{i\}$. We have $r^i_{t_i}(p) = r^i_{t_i}(m)$, and thus $r^j_{t_j}(p) \neq r^j_{t_j}(m)$ by the definition of  $TLS(q, n)$ and $\mathcal{O}$.
		We have $N(w) \cap F(m)= P^i_{t_i}(m)$, by Lemma \ref{plane}, of which exactly $q$ are in $C^j_{t_j}(m)$, namely the elements of $P^i_{t_i}(m) \cap P^j_{t_j}(m)$.
		As $w$ has $q^2$ neighbours in $C(j, t_j, r^j_{t_j})$ by Lemma \ref{xc}, it follows that $w$ has exactly $q^2-q$ neighbours in $C_j$.
		This finishes the proof of the lemma.
		
		\end{proof}
		
		Proposition \ref{abc} and Lemma \ref{abc2} implies that there are four kinds of vertices in the local graph $\Delta(u)$.
		To make this precise:
		\begin{pro}\label{lambda} With the above notation,
 		let $a\in A_{ij}$, $b\in B_i$, $c\in C_i$, and $v\in R$ for $i, j \in [q+1].$ Then we have:
 		\begin{enumerate}
 			\item $\lambda(u,a)=|F(m)-\{u,a\}|+|C_i|+|C_j|=q^3-2+2(n-1)q^2$;
 			\item $\lambda(u,b)=|F(m)-\{u,b\}|+|C_i|=q^3-2+(n-1)q^2$;
 			\item $\lambda(u,c)=\sum_{t\neq i}|N(c)\cap C_t|+|C(i,t_i,r^i_{t_i}(m))-\{u,c\}|=q(q^2-q)+nq^2-2=q^3-2 + (n-1)q^2$;
 			\item $\lambda(u,v)=|F(m)-\{u,v\}|=q^3-2$.
 		\end{enumerate}
		\end{pro}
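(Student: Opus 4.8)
The plan is to compute, in each of the four cases, the set $N(u)\cap N(w)$ of common neighbours of $u$ and the adjacent vertex $w\in\{a,b,c,v\}$, by exploiting the decomposition of $N(u)$. Recall that $N(u)=\bigl(F(m)\setminus\{u\}\bigr)\cup\bigcup_{s=1}^{q+1}C_s$, where $F(m)\setminus\{u\}$ is partitioned into the sets $A_{ij}$, $B_s$ and $R$ of Proposition \ref{abc} and induces a clique together with $u$ because $F(m)$ is a clique. The two basic tools will be Lemma \ref{inter}, which controls the intersections of the cliques $C(s,t,\ell)$, together with the defining property of the group-divisible orthogonal array $\mathcal{O}$: for rows taken from two \emph{different} groups, every ordered pair of values occurs in exactly one of the $n^2$ columns.

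For cases (i), (ii) and (iv) the second vertex lies in $F(m)$, say $w=(\mathbf z,m)$, and I would split the common neighbours into those inside and those outside $F(m)$. Since $F(m)$ is a clique containing both $u$ and $w$, every other vertex of $F(m)$ is a common neighbour, contributing $|F(m)\setminus\{u,w\}|=q^{3}-2$. For the part outside $F(m)$ the key observation is that $C_s\subseteq N(u)\cap N(w)$ exactly when $\mathbf z$ lies on the plane $P^{s}_{t_s}$ through $\mathbf x$, since then $u,w\in C(s,t_s,r^{s}_{t_s}(m))$ and $C_s$ is contained in that clique. By Theorem \ref{cut}, the number of planes $P^{s}_{t_s}$ through $\mathbf x$ that also contain $\mathbf z$ is $0$ when $\mathbf z\in R$, exactly one (the class $i$) when $\mathbf z\in B_i$, and exactly two (the classes $i,j$) when $\mathbf z\in A_{ij}$; here I would use that three distinct planes through $\mathbf x$ from distinct classes meet only in $\mathbf x$, so a vertex of $A_{ij}$ cannot lie on a third plane $P^{k}_{t_k}$. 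As the $C_s$ are pairwise disjoint by Proposition \ref{abc}(i), this accounts for $0$, $|C_i|$ and $|C_i|+|C_j|$ extra common neighbours, giving the stated totals.

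The main obstacle is to show that there are \emph{no other} common neighbours outside $F(m)$, i.e.\ that $C_s\cap N(w)=\varnothing$ whenever $\mathbf z\notin P^{s}_{t_s}$. Suppose $w'=(\mathbf w,p)\in C_s$ with $p\neq m$ were adjacent to $w$; then $w'$ would lie in one of the cliques $C(c,t''_c,r^{c}_{t''_c}(m))$ through $w$, where $P^{c}_{t''_c}$ denotes the plane through $\mathbf z$ in class $c$. If $c=s$, then $\mathbf z\notin P^{s}_{t_s}$ forces $t''_s\neq t_s$, so $P^{s}_{t_s}$ and $P^{s}_{t''_s}$ are parallel and disjoint and no such $w'$ exists. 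If $c\neq s$, then the two conditions $r^{s}_{t_s}(p)=r^{s}_{t_s}(m)$ and $r^{c}_{t''_c}(p)=r^{c}_{t''_c}(m)$ involve rows from different groups of $\mathcal{O}$, so by the uniqueness property of $\mathcal{O}$ the only column satisfying both is $p=m$, contradicting $p\neq m$. This rules out accidental common neighbours and completes cases (i) and (ii); case (iv) follows from the same dichotomy, since for $\mathbf z\in R$ no shared clique exists at all.

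For case (iii) the vertex $c=(\mathbf y,p)\in C_i$ lies outside $F(m)$, and I would argue as follows. Both $u$ and $c$ belong to the clique $C(i,t_i,r^{i}_{t_i}(m))$, so every remaining vertex of this clique is a common neighbour, contributing $|C(i,t_i,r^{i}_{t_i}(m))\setminus\{u,c\}|=nq^{2}-2$. By Lemma \ref{plane}, the neighbours of $c$ inside $F(m)$ form exactly the plane $P^{i}_{t_i}(m)$, which is already contained in this clique, so no common neighbours in $F(m)$ arise beyond those counted. The only remaining common neighbours lie in the cliques $C_j$ with $j\neq i$, and by Lemma \ref{abc2}(i) the vertex $c$ has exactly $q(q-1)=q^{2}-q$ neighbours in each such $C_j$. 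Summing over the $q$ indices $j\in[q+1]\setminus\{i\}$ gives $q(q^{2}-q)$, and hence $\lambda(u,c)=nq^{2}-2+q(q^{2}-q)=q^{3}-2+(n-1)q^{2}$, as claimed.
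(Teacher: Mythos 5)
Your argument is correct and follows essentially the same route as the paper, which states Proposition \ref{lambda} as an immediate consequence of the decomposition $N(u)=(F(m)\setminus\{u\})\cup\bigcup_s C_s$ from Proposition \ref{abc} together with the adjacency counts of Lemma \ref{abc2}; your case-by-case totals match the paper's displayed formulas term by term. The only difference is that you explicitly verify, via the group-divisibility of $\mathcal{O}$ and Theorem \ref{cut}, that a vertex of $F(m)$ not on $P^s_{t_s}$ has no neighbours in $C_s$ — a step the paper leaves implicit — which is a welcome addition rather than a deviation.
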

		This gives immediately the following:
		\begin{pro}\label{equit}
		With the above notation,
 		let  $\mathcal{A} := \bigcup_i\bigcup_{j\neq i}A_{ij},  \mathcal{B} = \bigcup_i B_i,$ and $ \mathcal{C}:= \bigcup_i C_i$. 
		Then the partition $\{ \mathcal{A}, \mathcal{B}, \mathcal{C}, R\}$ is an equitable partition of the local graph $\Delta(u)$ with quotient matrix 
 		$$Q:= \begin{pmatrix}
 			\binom{q+1}{2}(q-1)-1 & (q+1)(q-1) & 2(n-1)q^2 & \frac{q(q-1)^2}{2} \\
 			\binom{q+1}{2}(q-1) & (q+1)(q-1)-1 & (n-1)q^2 & \frac{q(q-1)^2}{2} \\
 			q(q-1) & q-1 & (n-1)q^2+q(q^2-q)-1& 0\\
 			\binom{q+1}{2}(q-1) & (q+1)(q-1) & 0 & \frac{q(q-1)^2}{2}-1
 			 \end{pmatrix}.$$
                 \end{pro}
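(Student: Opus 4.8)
The plan is to read off each entry of the quotient matrix from the structural results already established, and then to observe that every count depends only on which part a vertex lies in, not on the vertex itself, so that the partition is equitable. First I would record the ambient geometry from Proposition \ref{abc}: the four parts are mutually disjoint, the sets $\mathcal{A}$, $\mathcal{B}$ and $R$ are all contained in $F(m)$, and $\mathcal{C}$ is disjoint from $F(m)$. Since $F(m)$ induces a clique, any two distinct vertices of $\mathcal{A}\cup\mathcal{B}\cup R$ are adjacent. This single observation pins down the entire $3\times 3$ block of $Q$ indexed by $\{\mathcal{A},\mathcal{B},R\}$: a vertex of one of these three parts is adjacent to all of each of the other two parts, and to all but itself of its own part, so the relevant entries are exactly the cardinalities $|\mathcal{A}|=\binom{q+1}{2}(q-1)$, $|\mathcal{B}|=(q+1)(q-1)$ and $|R|=\tfrac{q(q-1)^2}{2}$ from Proposition \ref{abc}, with a $-1$ subtracted on each diagonal position.

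Next I would treat the entries involving $\mathcal{C}$. For the rows of $\mathcal{A}$, $\mathcal{B}$ and $R$ I would use Proposition \ref{lambda}: the term recorded there as a sum of clique sizes $|C_i|$ is precisely the number of neighbours in $\mathcal{C}$, giving $|C_i|+|C_j|=2(n-1)q^2$ for $a\in A_{ij}$, then $|C_i|=(n-1)q^2$ for $b\in B_i$, and $0$ for $v\in R$. For the row of $\mathcal{C}$ I would invoke Lemma \ref{abc2}: a vertex $c\in C_i$ has $q(q-1)$ neighbours in each $C_j$ with $j\neq i$ together with all but itself of $C_i$, yielding the diagonal entry $(n-1)q^2+q(q^2-q)-1$; it has $q-1$ neighbours in $A_{ij}$ for each of the $q$ pairs $\{i,j\}$ containing $i$ and none in the remaining $A_{hj}$, giving $q(q-1)$ in $\mathcal{A}$; it has $q-1$ neighbours in $B_i$ and none in the other $B_j$, giving $q-1$ in $\mathcal{B}$; and it has $0$ neighbours in $R$.

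Assembling these counts reproduces exactly the displayed matrix $Q$, and since in every case the number of neighbours of a vertex $w$ in a given part is independent of the choice of $w$ within its own part, the partition $\{\mathcal{A},\mathcal{B},\mathcal{C},R\}$ is equitable. The only substantive content is the interaction between $\mathcal{C}$ and the remaining parts: one must know exactly which cliques $C_t$ a vertex of $\mathcal{A}$ or $\mathcal{B}$ meets, and how a vertex of $C_i$ splits its neighbours among the other cliques $C_j$ and the planar pieces $A_{ij}$ and $B_i$. This is precisely what Proposition \ref{lambda} and Lemma \ref{abc2} already supply, so the remaining verification is pure bookkeeping and the proposition follows at once.
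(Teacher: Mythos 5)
Your proposal is correct and follows exactly the route the paper intends: the paper states Proposition \ref{equit} with no separate proof, asserting it is immediate from Proposition \ref{abc}, Lemma \ref{abc2}, Proposition \ref{lambda}, and the fact that $F(m)$ induces a clique containing $\mathcal{A}\cup\mathcal{B}\cup R$, and your write-up is precisely that bookkeeping carried out in full. All sixteen entries check out, including the only delicate ones (the $\mathcal{C}$-column and $\mathcal{C}$-row), so nothing is missing.
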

                 
                 As a consequence of the above two propositions we have the following result.
                 \begin{pro}\label{wer}
 			Each twisted Latin Square graph $TLS(q,n)$ is $(\alpha,\beta)$-weakly-edge-regular, where $\alpha+1 =(n-1)q^2 + q^3-2$ and $\beta=(2-q^3) - q^2(n-1)(q^2-q+1)$.
 		\end{pro}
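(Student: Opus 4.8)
The plan is to verify the defining relation of $(\alpha,\beta)$-weak edge regularity directly on each edge, using the data already assembled in Propositions \ref{lambda} and \ref{equit}. Recall that the condition to be checked is that
$$\sum_{z\in N(u)\cap N(w)}\lambda(u,z)=\alpha\,\lambda(u,w)-\beta$$
holds with the same constants $\alpha,\beta$ for every edge $uw$. I would fix a vertex $u$ and classify its neighbours $w$ according to the equitable partition $\{\mathcal{A},\mathcal{B},\mathcal{C},R\}$ of the local graph $\Delta(u)$ furnished by Proposition \ref{equit}.

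First, by Proposition \ref{lambda} the function $\lambda(u,\cdot)$ is constant on each part: it equals $\lambda_{\mathcal A}=q^3-2+2(n-1)q^2$ on $\mathcal A$, the common value $\lambda_{\mathcal B}=\lambda_{\mathcal C}=q^3-2+(n-1)q^2$ on $\mathcal B$ and $\mathcal C$, and $\lambda_R=q^3-2$ on $R$. In particular only three distinct values occur, so $TLS(q,n)$ has level $3$, consistent with Theorem \ref{intTLS}. Next, for a neighbour $w$ lying in a part $X$, the common neighbours $z$ of $u$ and $w$ are exactly the neighbours of $w$ inside $\Delta(u)$; since the partition is equitable, their distribution among $\mathcal A,\mathcal B,\mathcal C,R$ is read off from row $X$ of the quotient matrix $Q$. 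Because $\lambda(u,z)$ depends only on the part of $z$, the left-hand side above is the constant
$$S_X:=\sum_{Y\in\{\mathcal A,\mathcal B,\mathcal C,R\}}Q_{XY}\,\lambda_Y,$$
namely the dot product of row $X$ of $Q$ with the vector $(\lambda_{\mathcal A},\lambda_{\mathcal B},\lambda_{\mathcal C},\lambda_R)$.

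The verification then reduces to a purely numerical statement: the four points $(\lambda_X,S_X)$ must be collinear, i.e.\ there exist $\alpha,\beta$ with $S_X=\alpha\lambda_X-\beta$ for every $X$. I would carry this out in two steps. Since $\lambda_{\mathcal B}=\lambda_{\mathcal C}$, consistency first requires $S_{\mathcal B}=S_{\mathcal C}$, which I would check directly from the second and third rows of $Q$. I would then compute $\alpha$ from the two parts with distinct $\lambda$-values, e.g.\ $\alpha=(S_{\mathcal A}-S_R)/(\lambda_{\mathcal A}-\lambda_R)$ with $\lambda_{\mathcal A}-\lambda_R=2(n-1)q^2$, confirm that the middle point $(\lambda_{\mathcal B},S_{\mathcal B})$ lies on the same line, and finally read off $\beta=\alpha\lambda_R-S_R$. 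Substituting the explicit entries of $Q$ should yield $\alpha=(n-1)q^2+q^3-3$ and $\beta=(2-q^3)-q^2(n-1)(q^2-q+1)$, as claimed.

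The main obstacle is precisely the collinearity: there is no a priori reason that the four points $(\lambda_X,S_X)$ lie on a common line, and this is the real content of the proposition. Establishing it amounts to a careful polynomial computation in $q$ and $n$---each $S_X$ being a sum of products of multiplicities that are quadratic in $q$ with the $\lambda$-values---where the chief risk is an arithmetic slip in the large entries such as $\binom{q+1}{2}(q-1)$ or $(n-1)q^2+q(q^2-q)-1$. The identity $S_{\mathcal B}=S_{\mathcal C}$ serves as a useful internal consistency check before matching the line.
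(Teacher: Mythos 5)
Your proposal is correct and is essentially the paper's own argument: the paper's proof of this proposition simply says the claim ``can be directly confirmed by calculation, by using Propositions \ref{lambda} and \ref{equit}'', and your reduction to checking that the four points $(\lambda_X,S_X)$ lie on a common line (with $S_X$ the dot product of row $X$ of $Q$ with the vector of $\lambda$-values) is exactly that calculation, spelled out. The numerical verification does go through with $\alpha=(n-1)q^2+q^3-3$ and $\beta=(2-q^3)-q^2(n-1)(q^2-q+1)$, so nothing is missing.
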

 		
 		\begin{proof}
 			It can be directly confirmed by calculation, by using Propositions \ref{lambda} and \ref{equit}.
 		\end{proof}
		
		\begin{pro}\label{scer}
 			Each twisted Latin Square graph
 			$TLS(q,n)$ is $(\mu,\gamma)$-strongly co-edge-regular, where $\mu=(q+1)q^2$ and $\gamma=\mu(q^3-2+(n-1)q^2)$.
 		\end{pro}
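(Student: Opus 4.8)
The plan is to analyse a fixed pair of distinct non-adjacent vertices $u=(\mathbf{x},i)$ and $v=(\mathbf{y},j)$ (so that $i\neq j$) and to prove two things: first that $TLS(q,n)$ is co-edge-regular with parameter $\mu=(q+1)q^2$, i.e. $|N(u)\cap N(v)|=(q+1)q^2$, and then that $\sum_{z\in N(u)\cap N(v)}\lambda(u,z)=\gamma$. The key simplification is that, by Proposition \ref{lambda}, every neighbour $z$ of $u$ satisfies $\lambda(u,z)=q^3-2+m(z)\,(n-1)q^2$, where $m(z)\in\{0,1,2\}$ equals the number of the $q+1$ planes $P^s_{t_s}$ through $\mathbf{x}$ that also contain $z$ when $z\in F(i)$, and $m(z)=1$ when $z\notin F(i)$; these are exactly the four cases $A_{ij}$, $B_i$, $R$, and $C_i$ of Proposition \ref{lambda}. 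Hence $\sum_{z\in N(u)\cap N(v)}\lambda(u,z)=\mu(q^3-2)+(n-1)q^2\sum_z m(z)$, and since $\gamma=\mu(q^3-2+(n-1)q^2)$, the strong co-edge-regularity reduces to the single identity $\sum_{z\in N(u)\cap N(v)}m(z)=\mu$.

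For the co-edge-regular count I would write $N(v)=(F(j)\setminus\{v\})\sqcup\bigsqcup_s C_s(v)$, where $C_s(v)=C(s,t_s^v,r^s_{t_s^v}(j))\setminus F(j)$, and intersect with $N(u)$. Non-adjacency forces $u\notin C(s,t_s^v,\cdot)$ for every $s$ (otherwise $u$ and $v$ would share a clique), so Lemma \ref{xc} gives $|N(u)\cap C(s,t_s^v,\cdot)|=q^2$, while Lemma \ref{inter} and Lemma \ref{plane} control the layer-$j$ correction $|N(u)\cap P^s_{t_s^v}(j)|$. The count then splits according to whether $u$ has a neighbour in $F(j)$: if not, each of the $q+1$ cliques contributes a full $q^2$, for a total of $(q+1)q^2$; if so, $N(u)\cap F(j)$ is a single plane $P^{s_1}_{t_{s_1}^u}(j)$ by Lemma \ref{plane}, which meets $P^s_{t_s^v}$ in $q$ points for $s\neq s_1$ and in $0$ points for $s=s_1$ (the latter because, otherwise, $\mathbf{x}$ and $\mathbf{y}$ would lie in a common plane of class $s_1$, forcing $u\sim v$ by the definition of $TLS(q,n)$), giving $q^2+q(q^2-q)+q^2=(q+1)q^2$ once more. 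Thus $\mu=(q+1)q^2$ in all cases.

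For the identity $\sum_z m(z)=\mu$ I would split $N(u)\cap N(v)$ into its part inside $F(i)$ and its part outside. Every common neighbour outside $F(i)$ contributes $m=1$, so that part contributes $\mu-|N(u)\cap N(v)\cap F(i)|$. The part inside $F(i)$ equals $N(v)\cap F(i)$, which by Lemma \ref{plane} is either empty or a single plane $P^{s_0}_{t_{s_0}^v}(i)$ of size $q^2$, and which does not contain $u$ (again by non-adjacency). Summing $m$ over this plane by double counting over the $q+1$ classes gives $\sum_{\mathbf{w}\in P^{s_0}_{t_{s_0}^v}}\#\{s:\mathbf{w}\in P^s_{t_s^u}\}=\sum_s|P^{s_0}_{t_{s_0}^v}\cap P^s_{t_s^u}|=q\cdot q=q^2$, since the class-$s_0$ term vanishes (distinct, hence parallel and disjoint planes) while each of the remaining $q$ terms equals $q$ by Theorem \ref{cut}. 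Therefore the inside-$F(i)$ part contributes exactly its own cardinality, and adding the two parts yields $\sum_z m(z)=\mu$, which gives $\gamma$ as explained.

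The main obstacle is the co-edge-regularity count, since it genuinely requires a case distinction driven by the orthogonal-array structure: the delicate point is to show that whenever $u$ has a neighbour in $F(j)$ the corresponding class $s_1$ is ``used up'' (its intersection with $P^{s_1}_{t_{s_1}^v}$ is empty), which is exactly what makes the two cases collapse to the common value $(q+1)q^2$. Once $\mu=(q+1)q^2$ is established, the $\gamma$-identity is essentially forced by the averaging identity $\sum_z m(z)=\mu$, whose only nontrivial input is the clean double count $\sum_s|P^{s_0}_{t_{s_0}^v}\cap P^s_{t_s^u}|=q^2$ supplied by Theorem \ref{cut}.
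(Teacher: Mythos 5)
Your proposal is correct and follows essentially the same route as the paper: a case split on whether the non-adjacent vertex has neighbours in the fibre $F(\cdot)$, with Lemma \ref{xc}, Lemma \ref{plane}, Theorem \ref{cut} and Proposition \ref{lambda} supplying the counts. Your reduction of the $\gamma$-identity to $\sum_z m(z)=\mu$ via the double count $\sum_s|P^{s_0}_{t_{s_0}^v}\cap P^s_{t_s^u}|=q^2$ is just a tidier repackaging of the paper's explicit computation of $|N(w)\cap\mathcal{A}|=\tfrac{q(q-1)}{2}$, $|N(w)\cap\mathcal{B}|=q$ and $|N(w)\cap R|=\tfrac{q(q-1)}{2}$, which sums to the same weighted total.
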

		 	\begin{proof}
			
 		        Let $u=(\mathbf{x},m)$ be a vertex of the graph $TLS(q,n)$, where $\mathbf{x} \in\mathbb{F}^3_q$ and $ m \in [n^2]$.
		        We use the above notations $A_{ij}$, and so on. 
 			Let $w=(\mathbf{y},m')$ be a non-adjacent vertex to $u$. Then $m' \neq m$.
 			We have to consider two cases for $w$: either $N(w) \cap F(m) = \emptyset$ or $N(w) \cap F(m) \neq \emptyset$.
			In the first case, we have $N(w)\cap N(u)=N(w)\cap(\cup_{i=1}^{q+1}C_i)$.
 			This implies $|N(w)\cap N(u)|=(q+1)q^2$ and $$\sum_{v\in N(w)\cap N(u)}\lambda(u,v)= (q+1)q^2(q^3-2+(n-1)q^2)$$ by Lemma \ref{xc} and Proposition \ref{lambda}.
			
			If $N(w) \cap F(m) \neq \emptyset$, then, by Lemma \ref{plane}, there exists $s \in [q+1]$ and $ t \in [q]$ such that 
			$N(w) \cap F(m) = P^s_t(m)$. 
			We obtain that $|A_{ij} \cap P^s_t(m)| = 1$ for $i, j \in [q+1]-\{s\}$, and hence $|B_i \cap P^t_s(m)| = 1$ for $i \in [q+1] - \{s\}$.
			So that means $|\mathcal{A} \cap P^s_t(m)|= \frac{q(q-1)}{2}$ and $|\mathcal{B} \cap P^s_t(m)| = q$. 
			We obtain that $|R \cap P^s_t(m)| = q^2 - \frac{q(q-1)}{2}- q =  \frac{q(q-1)}{2}$.
			Also this implies that $w$ has exactly $q^2 - q$ neighbours in $C_i$ if $i \in [q+1]- \{s\}$ and exactly $q^2$ neighbours in $C_s$, by Lemma \ref{xc} and 
			Proposition \ref{lambda}. As $w$ has exactly $q^2$ neighbours in $F(m)$, we see that 
			 $$|N(w)\cap N(u)|=|N(w) \cap F(m)| + \sum_{i=1}^{q+1}  |N(w) \cap C_i| = q^2 + q^2 + q(q^2 -q) = (q+1)q^2.$$
			Now we have   
			\begin{align*} \sum_{v\in N(w)\cap N(u)}\lambda(u,v) =&\sum_{v\in N(w) \cap \mathcal{A}} \lambda(u,v) + \sum_{v\in N(w) \cap \mathcal{B}} \lambda(u,v) +
			\sum_{v\in N(w) \cap \mathcal{C}} \lambda(u,v)+ \sum_{v\in N(w) \cap R} \lambda(u,v)\\
			=&\frac{q(q-1)}{2}(q^3-2+2(n-1)q^2) + q(q^3-2+(n-1)q^2)+\\
			&(q(q^2-q) + q^2)(q^3-2+(n-1)q^2)  + \frac{q(q-1)}{2}(q^3-2)\\
			 =& (q+1)q^2(q^3-2+(n-1)q^2).
			\end{align*}
			
			This shows the proposition.
		         \end{proof}
		         
		         \begin{cor}
 			Each twisted Latin Square graph
 			$TLS(q,n)$ is a co-edge-regular graph of level $3$ with spectrum $$\{(q^3-1+q^2(q+1)(n-1))^{1},(q^2(n-1)-1)^{(qn-1)(q+1)},(-1)^{(q-1)q^2n^2},(-q^2-1)^{(qn-1)(n-1)q}\}.$$
 		\end{cor}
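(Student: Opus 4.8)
The plan is to assemble the corollary from the structural results already in hand, in four stages: fixing the level, bounding the number of distinct eigenvalues, identifying the eigenvalues, and computing their multiplicities.

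First I would read the level directly off Proposition \ref{lambda}. For a fixed vertex $u=(\mathbf x,m)$, every neighbour lies in exactly one of the classes $A_{ij}$, $B_i$, $C_i$, $R$, and the four displayed values of $\lambda(u,\cdot)$ collapse to the three distinct numbers $q^3-2$, $q^3-2+(n-1)q^2$ and $q^3-2+2(n-1)q^2$ (the values on $B_i$ and $C_i$ coinciding). For $q\ge 2$ and $n\ge 2$ each of $A_{ij}$, $B_i$ and $R$ is nonempty by Proposition \ref{abc}, so all three values actually occur and $TLS(q,n)$ has level exactly $3$.

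Next I would combine Propositions \ref{wer} and \ref{scer} with Theorem \ref{4ev}: since $TLS(q,n)$ is $(\alpha,\beta)$-weakly edge-regular and $(\mu,\gamma)$-strongly co-edge-regular, it has at most four distinct eigenvalues. Because it has level $3$ it is not strongly regular, so by Lemma \ref{srg} a connected non-complete such graph cannot have only three eigenvalues; hence, once connectivity is secured, it has exactly four distinct eigenvalues $k>\theta_1>\theta_2>\theta_3$. By Lemma \ref{reg} the valency $k=q^3-1+q^2(q+1)(n-1)$ is the largest eigenvalue, and by Lemma \ref{ev} one of $\theta_1,\theta_2,\theta_3$ equals $q^2(n-1)-1$. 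To pin down the other two I would use the first two relations of Theorem \ref{4ev}, namely $\theta_1+\theta_2+\theta_3=\alpha-\mu$ and $\theta_1\theta_2+\theta_1\theta_3+\theta_2\theta_3=\mu(\alpha-1)+k-\beta-\gamma$. Substituting $\alpha,\beta$ from Proposition \ref{wer} and $\mu,\gamma$ from Proposition \ref{scer}, and then eliminating the known root $q^2(n-1)-1$, leaves a quadratic whose coefficients simplify to $x^2+(q^2+2)x+(q^2+1)=(x+1)(x+q^2+1)$; thus the remaining eigenvalues are $-1$ and $-q^2-1$.

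Finally I would recover the multiplicities $m_0,\dots,m_3$ of $k,\,q^2(n-1)-1,\,-1,\,-q^2-1$ from the moment equations $\sum_i m_i=q^3n^2$, $\sum_i\theta_i m_i=\operatorname{tr}(A)=0$ and $\sum_i\theta_i^2 m_i=\operatorname{tr}(A^2)=q^3n^2\,k$, together with $m_0=1$. Since the four eigenvalues are distinct this linear system has a unique solution, which I would check equals the stated $1,\ (qn-1)(q+1),\ (q-1)q^2n^2,\ (qn-1)(n-1)q$ (whose sum is indeed $q^3n^2$). The two points needing genuine work, rather than bookkeeping, are establishing connectivity of $TLS(q,n)$ — which I expect to get by observing that the fibres $F(i)$ are cliques joined through the spanning cliques $C(s,t,\ell)$, so that two fibres lie in the same component whenever two columns of $\mathcal O$ agree in some row, reducing connectivity to that of a rook-type graph on the columns (connected because two rows from different groups already coordinatise the columns by $[n]\times[n]$) — and the parameter simplification that turns the two symmetric-function relations into the clean factorisation $(x+1)(x+q^2+1)$. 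This algebraic verification, though routine, is the main computational hurdle.
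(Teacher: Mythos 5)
Your proposal follows essentially the same route as the paper: level $3$ is read off from Proposition \ref{lambda}, exactly four distinct eigenvalues follow from Propositions \ref{wer} and \ref{scer} via Theorem \ref{4ev} together with the non-strong-regularity of the local graphs, the eigenvalues $-1$ and $-q^2-1$ are extracted from the two symmetric-function relations after feeding in $q^2(n-1)-1$ from Lemma \ref{ev}, and the multiplicities come from the trace conditions. Your explicit attention to connectivity is a point the paper glosses over, but it is a minor supplement rather than a different argument.
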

 		\begin{proof}
 			By Proposition \ref{lambda} and Proposition \ref{scer}, 	$TLS(q,n)$ is a co-edge-regular graph of grade $3$.
 			
 			By Lemma \ref{reg} and Proposition \ref{wer}, each twisted Latin Square graph $TLS(q,n)$ is  $k$-regular where 
			$k=q^3-1+(q+1)(n-1)q^2$, and weakly-edge-regular with $\alpha=(n-1)q^2 + q^3-3$ and $\beta=(2-q^3) - q^2(n-1)(q^2-q+1)$.
 			By Proposition \ref{scer}, $TLS(q,n)$ is strongly co-edge-regular  with $\mu=(q+1)q^2$ and $\gamma=\mu(q^2-q)q+nq^2-2$. 
 			Note that $TLS(q,n)$ is not a strongly regular graph  as there are three different valencies in any local graph of $TLS(q,n)$.
 			Therefore, $TLS(q,n)$ has exactly four distinct eigenvalues $k=\theta_0>\theta_1>\theta_2>\theta_3$ by Theorem \ref{4ev}.
 			
 			It follows from Lemma \ref{ev} that $\theta_1=q^2(n-1)-1$. By Theorem \ref{4ev} and Equation (\ref{1}),  we obtain
			$\alpha-\mu=f_1=\theta_1+\theta_2+\theta_3$ and $\mu(\alpha-1)+k-\beta-\gamma=f_2=\theta_1\theta_2+\theta_1\theta_3+\theta_2\theta_3$. 
			Therefore, $\theta_2=-1$ and $\theta_3=-q^2-1$. The multiplicities can be determined by using $Tr(A)=0$, $Tr(A^2)=q^3n^2k$, and that the sum of multiplicities 
			is equal to $q^3n^2=|V(TLS(q,n))|$.
 		\end{proof}
		
		As a consequence of the above corollary and Theorem \ref{proj}, we have the following result which provides a proof of Theorem \ref{intTLS}.
		\begin{thm}\label{TLS} 
			For a prime power $q$, there exist infinitely many integers $n$ such that a twisted Latin Square graph
 			$TLS(q,n)$ exists. The graph $TLS(q,n)$ is
 			a co-edge-regular graph with level $3$ and
 			 cospectral with the $q$-clique extension of each Latin Square graph $LS_{q+1}(qn)$.
		\end{thm}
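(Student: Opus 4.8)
The plan is to combine the two main ingredients already established in the excerpt: the computation of the spectrum of $TLS(q,n)$ in the preceding corollary, and the known spectrum of a Latin Square graph together with the formula for how spectra transform under $s$-clique extensions. First I would invoke Theorem \ref{proj} (MacNeish's Theorem) to secure the existence part. Since $q$ is a prime power, $q = p^{\alpha}$ for a single prime, so an $OA(qn, 2)$ exists trivially, but more importantly for each prime power $q$ there are infinitely many $n$ for which an $OA(n, q+1)$ exists: taking $n$ to be a power of $p$ (or more generally any integer all of whose prime-power factors are at least $q$) gives $\min_i p_i^{\alpha_i} + 1 \geq q+1$, hence an $OA(n, q+1)$, and therefore a $GOA(n, q, q+1)$ by the remark that an $OA(n,t)$ yields a $GOA(n,s,t)$ for all $s \geq 2$. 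This guarantees that the construction of $TLS(q,n)$ goes through for infinitely many $n$, which is exactly the existence assertion.

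Next I would establish the cospectrality. The level-$3$ claim and the full spectrum of $TLS(q,n)$ are already recorded in the corollary immediately above, namely
$$
\{(q^3-1+q^2(q+1)(n-1))^{1},\,(q^2(n-1)-1)^{(qn-1)(q+1)},\,(-1)^{(q-1)q^2n^2},\,(-q^2-1)^{(qn-1)(n-1)q}\}.
$$
So the only thing left is to check that the $q$-clique extension of $LS_{q+1}(qn)$ has this same spectrum. I would start from the stated spectrum of a Latin Square graph with the substitutions $m = q+1$ and $N = qn$ (using $N$ for the Latin-square order to avoid clashing with the exponent $n$ in $TLS$), namely
$$
Spec(LS_{q+1}(qn)) = \{[(qn-1)(q+1)]^{1},\,[qn-q-1]^{(qn-1)(q+1)},\,[-q-1]^{(qn-1)(qn-q)}\},
$$
and then apply the clique-extension eigenvalue rule $\theta \mapsto s(\theta+1)-1$ with $s = q$, appending $-1$ with multiplicity $(s-1)v = (q-1)(qn)^2 = (q-1)q^2n^2$ where $v = (qn)^2$ is the number of vertices of the Latin Square graph.

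The core of the proof is then the routine but essential verification that the three transformed eigenvalues and their multiplicities match those of $TLS(q,n)$. Under $\theta \mapsto q(\theta+1)-1$ the top eigenvalue $(qn-1)(q+1)$ maps to $q(qn-1)(q+1)+q-1$, which I would simplify and check equals $q^3-1+q^2(q+1)(n-1)$; the eigenvalue $qn-q-1$ maps to $q(qn-q) - 1 = q^2(n-1)-1$, matching $\theta_1$; and $-q-1$ maps to $-q^2-1$, matching $\theta_3$. The multiplicities $(qn-1)(q+1)$ and $(qn-1)(qn-q) = (qn-1)(n-1)q$ carry over verbatim, and the appended $-1$ has multiplicity $(q-1)q^2n^2$, exactly matching $\theta_2 = -1$. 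I do not anticipate a genuine obstacle here, as all the hard structural work was done in establishing the spectrum of $TLS(q,n)$; the main point requiring care is purely bookkeeping, namely keeping the two roles of $n$ (the order of the orthogonal array versus the Latin-square order $qn$) and the two parameters $q$ (as prime power, as clique size $s$, and as the Latin Square index $m = q+1$) straight so that the multiplicity counts align on the nose.
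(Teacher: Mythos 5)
Your proposal is correct and takes essentially the same route as the paper, which obtains Theorem \ref{TLS} directly as a consequence of the preceding corollary (giving the spectrum and level of $TLS(q,n)$) together with MacNeish's Theorem for the existence of a $GOA(n,q,q+1)$ for infinitely many $n$. Your explicit check that the clique-extension map $\theta\mapsto q(\theta+1)-1$ carries the spectrum of $LS_{q+1}(qn)$ onto that of $TLS(q,n)$, with matching multiplicities and the appended eigenvalue $-1$ of multiplicity $(q-1)q^2n^2$, is accurate bookkeeping that the paper leaves implicit.
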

		
		It worth mentioning that $TLS(q,n)$ is not switching equivalent to the $q$-clique extension of $LS_{q+1}(qn)$
		if $n\geq 6$.
		
		The smallest example of a twisted Latin Square graph is $TLS(2,2)$, which has $32$ vertices. Van Dam and Spence \cite{vs98} provided tables of connected regular graphs with 
		four distinct eigenvalues and at most $30$ vertices. 
		
		In particular, we can also consider cliques-extensions of twisted Latin Square graphs.
		As a consequence, for any positive integer $s\geq 2$, there exist infinitely many co-edge-regular graphs that are non-isomorphic but cospectral with the $s$-clique extension of certain Latin Square graphs
		
		\begin{thm}\label{TLScor}
			Let $q$ be a prime power. For any positive integer $s\geq 2$, such that $q$ is a factor of $s$. If $q\neq s$, then there exist infinitely many integers $n$ for which a co-edge-regular graph with level $4$ exists that is cospectral with the $s$-clique extension of each Latin Square graph $LS_{q+1}(qn)$.
		\end{thm}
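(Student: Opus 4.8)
The plan is to obtain the required graph as an ordinary $m$-clique extension of a twisted Latin Square graph. Since $q$ divides $s$ and $q \neq s$, write $s = mq$ with $m := s/q \geq 2$, and for each admissible $n$ let $H$ denote the $m$-clique extension of $TLS(q,n)$; by MacNeish's Theorem \ref{proj} together with Theorem \ref{TLS} there are infinitely many such $n$. The $m$-clique extension of the $q$-clique extension of a graph is exactly its $(mq)$-clique extension, since every original vertex is blown up into a clique of $mq = s$ mutually adjacent copies. Hence, combining Theorem \ref{TLS}, which asserts that $TLS(q,n)$ is cospectral with the $q$-clique extension of $LS_{q+1}(qn)$, with the observation that cospectral graphs have cospectral clique-extensions, I conclude that $H$ is cospectral with the $s$-clique extension of $LS_{q+1}(qn)$; in particular $H$ has exactly four distinct eigenvalues, and since all the $LS_{q+1}(qn)$ are mutually cospectral so are their $s$-clique extensions, so $H$ is cospectral with each of them.

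Next I would check co-edge-regularity. In general, if a $k$-regular graph $G$ is co-edge-regular with parameter $\mu$, then its $m$-clique extension is co-edge-regular with parameter $m\mu$: two vertices of the extension are non-adjacent exactly when they are copies, lying in distinct cliques, of two non-adjacent vertices $x,y$ of $G$, and a vertex of the extension is a common neighbour of these two copies precisely when it is one of the $m$ copies of a common neighbour of $x$ and $y$ in $G$. Applying this with $\mu = (q+1)q^2$ from Proposition \ref{scer}, the graph $H$ is co-edge-regular with parameter $m(q+1)q^2$.

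The heart of the argument is to show that the level of $H$ equals $4$. The edges of $H$ fall into two types. A within-clique edge, joining two of the $m$ copies of a single vertex of $TLS(q,n)$, has $\lambda$-value $(m-2)+mk$, where $k = q^3 - 1 + (q+1)(n-1)q^2$ is the valency from Lemma \ref{reg}; all such edges share this one value. A between-clique edge, joining copies of an edge $xy$ of $TLS(q,n)$, has $\lambda$-value $2(m-1)+m\lambda(x,y)$. Because $TLS(q,n)$ has level $3$, Proposition \ref{lambda} supplies exactly three distinct values of $\lambda(x,y)$, namely $q^3-2+2(n-1)q^2$, $q^3-2+(n-1)q^2$, and $q^3-2$; the affine substitution $t \mapsto 2(m-1)+mt$ is injective, so these produce three distinct between-clique values. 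Thus $H$ has at most four distinct $\lambda$-values.

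The step requiring the most care — and the main obstacle — is to confirm that the single within-clique value differs from each of the three between-clique values, so that the level is exactly $4$ and not $3$. For this I would compute
$$\big[(m-2)+mk\big]-\big[2(m-1)+m\lambda(x,y)\big]=m\big(k-\lambda(x,y)-1\big),$$
and verify that $k - \lambda(x,y) - 1$ is strictly positive for each of the three values of $\lambda(x,y)$; indeed it equals $(q-1)(n-1)q^2$, $q^3(n-1)$, and $(q+1)(n-1)q^2$ respectively, all positive for $q \geq 2$ and $n \geq 2$. This yields level exactly $4$. Assembling the three facts — cospectrality with the $s$-clique extension of $LS_{q+1}(qn)$, co-edge-regularity, and level $4$ — over the infinitely many admissible $n$ completes the proof.
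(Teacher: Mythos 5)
Your proposal is correct and follows essentially the same route as the paper: both take the $(s/q)$-clique extension of $TLS(q,n)$ and invoke Theorem \ref{TLS} for cospectrality with the $s$-clique extension of $LS_{q+1}(qn)$. The paper's own proof merely asserts the level-$4$ and co-edge-regularity claims, whereas you verify them explicitly (the $\lambda$-value computation $(m-2)+mk$ versus $2(m-1)+m\lambda(x,y)$ and the positivity of $m(k-\lambda(x,y)-1)$ for the three values from Proposition \ref{lambda}), which is a welcome amplification rather than a different argument.
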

		\begin{proof}		
		By Theorem \ref{TLS}, there exists infinitely many integers $n$ such that a twisted Latin Square graph
		$TLS(q,n)$ exists.
		Then the $\frac{s}{q}$-clique extension of $TLS(q,n)$ is a co-edge-regular graph of grade at least $3$ with the same spectrum as the $s$-clique extension of $LS_{q+1}(qn)$. Moreover, if $s\neq q$, then  the $\frac{s}{q}$-clique extension of $TLS(q,n)$ is a co-edge-regular graph of grade $4$.
		\end{proof}
		
		We conclude with some open problems:
		\begin{problem}
		\begin{enumerate}
		\item Let $G$ be a connected co-edge-regular graph with level $t$ and with exactly $d+1$ distinct eigenvalues. Is $t$ bounded by a function of $d$?
		Even for $d =3$ we do not know it.
		\item Is it possible to construct infinite families of cospectral graphs of certain clique extensions of certain Steiner graphs in a similar manner as in Section 5?
		\item Is it possible to generalize Theorem \ref{N51} to the class of co-edge-regular graphs of level 2 with exactly four distinct eigenvalues?
		\end{enumerate}
		\end{problem}
		\section*{Acknowledgements}
		J.H. Koolen is partially supported by the National Key R. and D. Program of China (No. 2020YFA0713100), the National Natural Science Foundation of China 
		(No. 12071454, No. 12471335), and the Anhui Initiative in Quantum Information Technologies (No. AHY150000).  We thank Dr. Qianqian Yang for her helpful comments.

 		\bibliographystyle{plain}
                 \bibliography{GK2024}
		\end{document}